\documentclass[10pt, a4paper]{article}
\usepackage[T1]{fontenc}

\usepackage{amsmath,amsthm, amssymb,color}
\usepackage{mathtools}
\usepackage{esint}
\usepackage{hyperref}
\usepackage{cite}
\usepackage{etoolbox}
\usepackage{float}

\usepackage{fancyhdr}
\usepackage[margin=3cm]{geometry}

\numberwithin{equation}{section}
\newtheorem{theorem}{Theorem}[section]
\newtheorem{proposition}[theorem]{Proposition}
\newtheorem{lemma}[theorem]{Lemma}

\newtheorem{remark}[theorem]{Remark}
\theoremstyle{definition}
\newtheorem{definition}[theorem]{Definition}

\newcommand{\writefoot}[1]{
    \renewcommand{\thefootnote}{}
    \footnotetext{\hspace{-16.5pt}\scriptsize#1}
    \renewcommand{\thefootnote}{\arabic{footnote}}
}
\numberwithin{equation}{section}

\begin{document}

\writefoot{\small \textbf{AMS subject classifications (2020).} Primary: 
 34A34, % (1980-now) Nonlinear ordinary differential equations and systems 
 34D05; % (1973-now) Asymptotic properties of solutions to ordinary differential equations 
Secondary:
 92D25. % (1991-now) Population dynamics (general) 
 \smallskip}
\writefoot{\small \textbf{Keywords.} Lotka-Volterra system, three-species competition, non-periodic oscillatory behavior, asymptotic analysis. 
\smallskip
}
\writefoot{\small \textbf{Acknowledgements:} 
QVT acknowledges the  support of Région Normandie through the Ph.D fellowship program.
AD and QG are supported by the project TALEDZ of the research program PREZODE and received government funding managed by the Agence Nationale de la Recherche under the France 2030 program, reference  ANR-24-PEPZ-0003. 
QG was partially supported by a PEPS-JCJC grant from CNRS (2019) and by ANR grant “Indyana” Number ANR-21-CE40-0008.}

\writefoot{\small$\,^*$ Corresponding author. e-mail: {\tt quang-vinh.tran@univ-lehavre.fr}}
\begin{center}
\begin{minipage}{0.9\textwidth}
\centering
\LARGE{\bf Heteroclinic cycles and long-time behavior in a
singularly perturbed three-species Lotka–Volterra system}\bigskip
\end{minipage}

\Large
Arnaud Ducrot$^{a}$, Quentin Griette$^{a}$, and Quang-Vinh Tran$^{a,*}$\bigskip \\
\normalsize
$^{a}$Université Le Havre Normandie, Normandie Univ.,
LMAH UR 3821, F-76600 Le Havre, France.\\
\bigskip

\end{center}

\begin{abstract}
%Short description about the model, describe the result, describe the difficulty, why is interesting.

We study a three-species competitive Lotka--Volterra system with cyclic dominance, motivated by rock--paper--scissors interactions in ecological communities. The model contains a singularly strong interaction coefficient of order $1/\varepsilon$, representing a regime in which one species exerts a much stronger competitive pressure on another. We show that, despite this strong asymmetry, the cyclic dominance mechanism persists.

Our main result is a quantitative description of one complete dominance pattern in the singular regime $0<\varepsilon\ll 1$. Starting near the state where the third species dominates, we prove that the solution successively passes near the dominance states of the second and first species before returning to a comparable configuration. We obtain logarithmic estimates for the return time and for the population densities at the return section. In particular, although one species becomes extremely small at the beginning and at the end of each pattern, it recovers to an order-one density during the cycle.

We then prove that this pattern repeats indefinitely and that the omega-limit set is composed by three heteroclinic orbits on the boundary of the positive orthant, connecting the three single-species equilibria. Thus the long-time behavior is cyclic but aperiodic, with increasingly long excursions near successive single-species states.
\end{abstract}

\section{Introduction}%\label{sec:intro}
We consider the following three-species competitive Lotka--Volterra system
\begin{align}\label{eq:main-1}
\begin{split}
\begin{cases}
v_{1}'(t)=v_{1}(t)\bigl(1-v_{1}(t)-k_{12}v_{2}(t)-k_{13}v_{3}(t)\bigr),\\[0.3em]
v_{2}'(t)=v_{2}(t)\bigl(1-\tfrac{1}{\varepsilon}\,v_{1}(t)-v_{2}(t)-k_{23}v_{3}(t)\bigr),\\[0.3em]
v_{3}'(t)=v_{3}(t)\bigl(1-k_{31}v_{1}(t)-k_{32}v_{2}(t)-v_{3}(t)\bigr),
\end{cases}
\end{split}
\end{align}
where $\varepsilon>0$ is a small parameter and $k_{ij}>0$ are fixed interaction coefficients.
Throughout the paper, we work in the cyclic dominance regime
\begin{equation}\label{eq:cond-sys-0}
0<k_{12},k_{23},k_{31}<1,\qquad k_{13},k_{32}>1,\qquad k_{21}=\tfrac1\varepsilon,
\end{equation}
together with the additional conditions \eqref{eq:cond-sys-2} stated below.

Lotka--Volterra systems constitute one of the classical mathematical
frameworks for population dynamics, with important applications in ecology, especially in the study of interacting populations and competitive systems. These systems go back to the works of
Lotka~\cite{L20a,L20b,L25} and Volterra~\cite{V26,V26b}, and were later
supported experimentally by Gause~\cite{G34,GW35}. In the two-species competitive case, the long-time behavior is now relatively well understood: depending on the interaction coefficients, solutions typically converge either to a coexistence equilibrium or to a competitive-exclusion equilibrium; see, for example, \cite{G76,M02}. In higher dimensions, this simple picture no longer describes all possible behaviors. The invariant boundary of the positive orthant may play an important role in the dynamics, and questions related to extinction, persistence, and the global organization of attracting sets become more delicate. These issues have been studied from several complementary viewpoints, including the theory of competitive and monotone dynamical systems~\cite{HS06}, local-to-global analysis in competitive Lotka--Volterra systems~\cite{ZZ03}, and persistence/extinction theory~\cite{HSG79,FW85,BW88,Z95,AS05}.

The situation is already substantially richer for three-species systems. In contrast with the two-species case, three-dimensional competitive Lotka--Volterra systems may exhibit non-equilibrium long-time behavior, including convergence toward heteroclinic cycles and, in special parameter regimes, periodic dynamics. Periodic orbits and Hopf bifurcations in three-dimensional competitive Lotka--Volterra systems were studied by Zeeman~\cite{Z93,Z96}, while van den Driessche and Zeeman~\cite{vddZ98} gave criteria excluding periodic orbits in certain parameter regimes. The global dynamics of three-species competitive Lotka--Volterra systems, including permanence, impermanence, and boundary dynamics, continue to be an active topic of research; see, for instance, \cite{NX24,NWX25}.

In~\cite{HSG79}, the authors studied competitive three-species Lotka--Volterra systems through the pairwise outcomes of the corresponding two-species subsystems. Among the possible configurations, a particularly important one is the non-transitive case, where species~1 outcompetes species~2, species~2 outcompetes species~3, and species~3 outcompetes species~1. In this case, the three species interact in a cyclic way, leading to rock--paper--scissors type dynamics and making the long-time behavior of the full system especially delicate to analyze.

This phenomenon was analyzed by May and Leonard~\cite{ML75} in the symmetric case $k_{12}=k_{23}=k_{31}=\alpha$ and $k_{13}=k_{21}=k_{32}=\beta$. In their model, the stability of the coexistence equilibrium is determined by the threshold $\alpha+\beta=2$: when $\alpha+\beta<2$, the coexistence equilibrium is asymptotically stable; when $\alpha+\beta=2$, the system admits closed orbits around the coexistence equilibrium; and when $\alpha+\beta>2$, the coexistence equilibrium becomes unstable and the dynamics is organized by a heteroclinic cycle on the boundary. In the latter regime, trajectories successively pass near states where one species dominates, then move toward the next dominant state, and eventually return close to the initial one. {Moreover, the time needed to complete each cycle increases along the trajectory.} Related asymptotic questions for Lotka--Volterra competing-species systems were also studied by Coste, Peyraud, and Coullet~\cite{CPC79}, who in particular gave an analytic description of the closed orbits observed by May and Leonard in the equal-growth-rate three-species case. More recently, {heteroclinic cycles} in Lotka--Volterra systems have also been revisited from stochastic, theoretical, and computational viewpoints; see, for instance, Barendregt and Thomas~\cite{BT23} and Bortolan, Kalita, Langa, and Moura~\cite{BKLM25}.

Schuster, Sigmund, and Wolff~\cite{SSW79} subsequently gave a more precise description of this asymptotic behavior in terms of \(\omega\)-limit sets. In particular, they clarified the long-time structure of the May--Leonard dynamics and showed that similar cyclic asymptotic behavior may also occur in asymmetric three-species competitive systems.

A detailed analysis of the asymmetric May--Leonard model was later provided by Chi, Hsu, and Wu~\cite{CWH98}. In their notation, setting $A_i=1-\alpha_i$ and $B_i=\beta_i-1$, the relevant classification parameter is $\mathcal H:=A_1A_2A_3-B_1B_2B_3$. They obtained a complete classification according to the sign of $\mathcal H$. If $\mathcal H>0$, the unique interior equilibrium is globally asymptotically stable in the interior of the positive orthant. If $\mathcal H<0$, the interior equilibrium is a saddle point with a one-dimensional stable manifold, and solutions with initial data outside this manifold oscillate aperiodically around the boundary heteroclinic cycle. If $\mathcal H=0$, the system lies in the borderline case associated with a Hopf bifurcation and admits a family of neutrally stable periodic solutions. Moreover, nontrivial periodic solutions exist only in the borderline case $\mathcal H=0$. In our variables,
\begin{align*}
\mathcal{H}_{\varepsilon}=(1-k_{12})(1-k_{23})(1-k_{31})-(k_{13}-1)\left(\frac{1}{\varepsilon}-1\right)(k_{32}-1).
\end{align*}
For $\varepsilon>0$ sufficiently small, one has $\mathcal H_{\varepsilon}<0$ under our assumptions. Hence the system lies in the heteroclinic regime of Chi--Hsu--Wu. In particular, the cyclic behavior studied here is not generated by an interior periodic orbit; rather, it is organized by a heteroclinic cycle on the boundary.

Such cyclic mechanisms also appear in biological systems. For instance, Sinervo and Lively~\cite{SL96} studied a rock--paper--scissors mechanism in the side-blotched lizard, where three male strategies dominate one another in a cyclic way. Kirkup and Riley~\cite{KR04} later studied another rock--paper--scissors mechanism in vivo, involving three strains of \emph{E. coli}, usually denoted by $C$, $R$, and $S$. In this bacterial system, the three strains also dominate one another cyclically. Cyclic dominance has also been studied in stochastic eco-evolutionary models of clonal interference. In particular, Billiard and Smadi~\cite{BS17} considered a three-type birth-and-death model with type-dependent competition, allowing non-transitive interactions between competing mutations and leading to patterns such as coexistence or rock--paper--scissors cycles. These examples show that cyclic dominance is a biologically relevant mechanism. More broadly, rock--paper--scissors type dynamics also play a central role in evolutionary game theory; see, for instance, Hofbauer and Sigmund~\cite{HS98}.

{These examples} motivates our study of system~\eqref{eq:main-1} under conditions~\eqref{eq:cond-sys-0}. We focus first on one complete pattern of the dynamics, in order to understand how the three species evolve during one cycle. The main feature of the model is the coefficient of order $1/\varepsilon$, with $0<\varepsilon\ll1$, which represents a very strong competitive effect of
species~1 on species~2. A priori, such a singular interaction could destroy the cyclic mechanism by forcing species~2 to remain negligible. Our analysis shows that this is not the case.

Our first objective is to derive a detailed description of the solution over one complete pattern. We start from an initial configuration located on the section $v_1=\varepsilon$, with $v_2(0)$ sufficiently small and $v_3(0)$ close to $1$. This corresponds to a state near the boundary equilibrium $\mathcal E_3=(0,0,1)$, where the first species is at the singular scale
$\varepsilon$, the second species is very rare, and the third species is dominant. We then follow the trajectory until it returns to the same section $v_1=\varepsilon$, but with negative velocity. More precisely, we define the {\it pattern time} by
\begin{align*}
T\coloneqq\inf\{t>0:\ v_1(t)=\varepsilon,\ v_1'(t)<0\}.
\end{align*}
The condition $v_1'(T)<0$ ensures that the trajectory crosses the section in the same direction as at the beginning of the next pattern. During the interval $[0,T]$, we prove that the solution successively enters neighborhoods of $\mathcal E_2=(0,1,0)$ and $\mathcal E_1=(1,0,0)$, and then returns to a configuration of the same type as the initial one, namely $v_1=\varepsilon$, with $v_2$ small and $v_3$ close to $1$. This completes one full pattern of the cyclic dominance mechanism. 

This behavior is consistent with the classical picture described by May and Leonard~\cite{ML75}, in which the solution passes successively near states where one species dominates while the others remain at very low density. We further show that this mechanism can be repeated, and that the time needed to complete each new pattern becomes longer and longer, while the trajectory approaches the boundary equilibria more and more closely. In this sense, our results recover the qualitative cyclic behavior described in the work of May and Leonard, while providing a quantitative description of one complete pattern in the singular regime considered here.

We now summarize the main results of the paper. First, we prove a quantitative result for one complete pattern of the dynamics. Starting from the section $v_1=\varepsilon$, with $v_2$ small and $v_3$ close to $1$, we show that the solution returns in finite time to the same section with $v_1'<0$. During this motion, the trajectory first moves away from the neighborhood of $\mathcal E_3$, then passes near $\mathcal E_2$, reaches a neighborhood of $\mathcal E_1$, and finally comes back to a state of the same type as the initial one. We also obtain explicit logarithmic bounds on the return time and
on the state of the solution at the return section, in terms of $\varepsilon$ and the initial size of $v_2$.

An important point is that species~2 does not disappear during this pattern. Even though species~1 has a very strong negative effect on species~2 because the coefficient $1/\varepsilon$ is large, species~2 still grows to a fixed positive size. More precisely, there exists a time $\hat T\in(0,T)$ such that
\begin{align*}
v_2(\hat T)\ge k_0,
\qquad
k_0=\frac12\Bigl(1-k_{23}+\frac1{k_{32}}\Bigr)>0.
\end{align*}
Thus $v_2$ is small at the beginning and also small when the solution returns to the section, but it becomes of order one during the pattern. This shows that the motion is truly cyclic.

Second, we show that this pattern can be repeated. For large times, the
solution visits neighborhoods of the three boundary equilibria
\begin{align*}
\mathcal{E}_3=(0,0,1),\qquad \mathcal{E}_2=(0,1,0),\qquad \mathcal{E}_1=(1,0,0),
\end{align*}
in the cyclic order $\mathcal{E}_3\to \mathcal{E}_2\to \mathcal{E}_1\to \mathcal{E}_3$. The estimates for one pattern
also explain why the next patterns take longer and longer: after each return, the value of $v_2$ becomes smaller, so the solution needs more time to grow again.

{Finally, we prove that these three equilibria are connected by a heteroclinic cycle
\begin{align*}
\Upsilon=\Upsilon_{32}\cup\Upsilon_{21}\cup\Upsilon_{13}
\cup\{\mathcal E_1,\mathcal E_2,\mathcal E_3\}.
\end{align*}
Here $\Upsilon_{32}$ connects $\mathcal E_3$ to $\mathcal E_2$,
$\Upsilon_{21}$ connects $\mathcal E_2$ to $\mathcal E_1$, and
$\Upsilon_{13}$ connects $\mathcal E_1$ back to $\mathcal E_3$. We then
show that the solution gets closer and closer to this cycle as time goes to infinity, and that its omega-limit set is exactly this cycle: $\omega(v)=\Upsilon$. Therefore, the long-time behavior is not a periodic orbit. It is a cyclic but aperiodic motion organized by a heteroclinic cycle.}

The paper is organized as follows. In Section~2, we state the assumptions and the main results. Section~3 collects preliminary properties of the system. Section~4 is devoted to the proof of the one-pattern theorem. In Section~5, we prove the repeated-visit property, construct the heteroclinic cycle, establish the local shadowing of the heteroclinic branches, and identify the omega-limit set.

%What is known for three species case?---Done.

%Speak about why we choose this specific parameters?---Done.

%What is our goal?---Done.

%Summary about the main results.--Done.

%Present the figure Phase at this point.---Done.
\section{Main results}
\subsection{Rescaled formulation and notation}
We perform the change of variables
\begin{align*}
u_1=\frac{v_1}{\varepsilon},\qquad u_2=v_2,\qquad u_3=v_3.
\end{align*}
Equivalently,
\begin{align*}
v_1=\varepsilon u_1,\qquad v_2=u_2,\qquad v_3=u_3.
\end{align*}
We denote by
\begin{align*}
\Phi_\varepsilon(u_1,u_2,u_3):=(\varepsilon u_1,u_2,u_3)
\end{align*}
the corresponding linear rescaling. The map
$\Phi_\varepsilon$ is a linear homeomorphism of the positive orthant onto
itself. In particular, the time variable is not changed, and the dynamics in
the $u$-variables are equivalent to those in the $v$-variables after this
rescaling of the first component.

In terms of $u=(u_1,u_2,u_3)$, system \eqref{eq:main-1} becomes
\begin{align}
\label{eq:system}
\begin{cases}
u_1'(t)=u_1(t)\bigl(1-\varepsilon u_1(t)-k_{12}u_2(t)-k_{13}u_3(t)\bigr),\\[0.3em]
u_2'(t)=u_2(t)\bigl(1-u_1(t)-u_2(t)-k_{23}u_3(t)\bigr),\\[0.3em]
u_3'(t)=u_3(t)\bigl(1-\varepsilon k_{31}u_1(t)-k_{32}u_2(t)-u_3(t)\bigr).
\end{cases}
\end{align}
Under this change of variables, the section $v_1=\varepsilon$ becomes
$u_1=1$. Hence the pattern time is written as
\begin{align*}
T:=\inf\left\{t>0:\ u_1(t)=1,\ u_1'(t)<0\right\}.
\end{align*}
{We now clarify the relation between the geometric objects in the original variables and those in the rescaled variables. In the original variables, the boundary equilibria are denoted by
\begin{align*}
\mathcal E_1=(1,0,0),\qquad
\mathcal E_2=(0,1,0),\qquad
\mathcal E_3=(0,0,1).
\end{align*}
In the rescaled variables, these equilibria become
\begin{align*}
E_1=\Phi_\varepsilon^{-1}(\mathcal E_1)
=\left(\frac1\varepsilon,0,0\right),\qquad
E_2=\Phi_\varepsilon^{-1}(\mathcal E_2)=(0,1,0),\qquad
E_3=\Phi_\varepsilon^{-1}(\mathcal E_3)=(0,0,1).
\end{align*}
We denote by
\begin{align*}
\Gamma=\Gamma_{32}\cup\Gamma_{21}\cup\Gamma_{13}
\cup\left\{E_1,E_2,E_3\right\},
\end{align*}
the heteroclinic cycle of the rescaled system \eqref{eq:system}. The
corresponding heteroclinic cycle in the original variables is denoted by
\begin{align*}
\Upsilon=\Upsilon_{32}\cup\Upsilon_{21}\cup\Upsilon_{13}
\cup\left\{\mathcal E_1,\mathcal E_2,\mathcal E_3\right\},
\end{align*}
where $\Upsilon_{ij}:=\Phi_\varepsilon(\Gamma_{ij})$ and $\mathcal E_i=\Phi_\varepsilon(E_i)$. Equivalently, $\Upsilon=\Phi_\varepsilon(\Gamma)$. Thus $\Upsilon$ and $\Gamma$ describe the same heteroclinic cycle in the two coordinate systems.

In the rescaled variables, the heteroclinic connections are oriented in the cyclic order
\begin{align*}
E_3\longrightarrow E_2\longrightarrow E_1\longrightarrow E_3.
\end{align*}
That is, $\Gamma_{32}$ connects $E_3$ to $E_2$, $\Gamma_{21}$ connects $E_2$ to $E_1$, and $\Gamma_{13}$ connects $E_1$ back to $E_3$.}

From this point on, unless otherwise stated, we work in the rescaled variables
and use the notation $E_i$ and $\Gamma$.

The cyclic dominance assumptions on the coefficients are rewritten as
\begin{equation}
\label{eq:cond-sys}
0<k_{12},k_{23},k_{31}<1,\qquad k_{13},k_{32}>1,\qquad k_{21}=1.
\end{equation}
In addition, we impose the structural conditions
\begin{align}
\label{eq:cond-sys-2}
k_{13}(1-k_{31})>1,\qquad
k_{32}(1-k_{23})>1,\qquad
k_{13}+k_{31}>2,\qquad
k_{32}+k_{23}>2,
\end{align}
and the smallness condition
\begin{align}
\label{eq:cond-sys-2-eps}
0<\varepsilon<\min\left\{k_{12},1-k_{12},\frac{1}{2-k_{12}}\right\}.
\end{align}
These assumptions ensure that the thresholds used in the pattern analysis are
strictly positive.
\subsection{Statement of main theorems}
We first introduce some notation used throughout the statement of the main results.
\begin{definition}
Let $X$ be a set and $f, g$ be two functions from $X$ to $\mathbb{R}$. We write $g(t)\lesssim f(t)$ to denote that there exists a positive constant $C>0$, which may depend on the parameters $(k_{ij})$ only (and not on $\varepsilon$), such that $g(t)\leq Cf(t)$ for all $t\in X$. We will use the same notation for constants (that can be considered as functions on the interval $X=\{0\}$).
\end{definition}
For any $x=(x_1,x_2,x_3)\in\mathbb{R}^3$, we denote by \begin{align*} \|x\|_\infty:=\max\{|x_1|,|x_2|,|x_3|\}, \end{align*} the usual sup-norm on $\mathbb{R}^3$.

We recall that the omega-limit set of a forward orbit \(u(t)\) is defined by
\begin{align*}
\omega(u)
:=
\left\{
y\in\mathbb R^3:\ \exists\, t_n\to+\infty
\text{ such that } u(t_n)\to y
\right\}.
\end{align*}
We now state our main results.
\begin{theorem}\label{thm:main}
Suppose that the coefficients $(k_{ij})$ satisfy \eqref{eq:cond-sys}, \eqref{eq:cond-sys-2}--\eqref{eq:cond-sys-2-eps}; let $0<\delta_1^-<\delta_1^+$ and $\delta_2>0$ be given,  and let $u=(u_1,u_2,u_3)$ denote the solution of \eqref{eq:system}
with initial data
\begin{align}\label{eq:IC}
u_1(0)=1,\qquad \delta_1^-<u_2(0)<\delta_1^+,\qquad 1-\delta_2<u_3(0)<1.
\end{align}
There exist constants $\delta_2^0,\varepsilon_0\in(0,1]$ such that, for every
$0<\varepsilon\le \varepsilon_0$, there exists $0<\delta_1^0(\varepsilon)\le 1$
with the following property: if
$\delta_1^+\le \delta_1^0(\varepsilon)$ and $\delta_2\le \delta_2^0$,
then the pattern time
\begin{align}\label{eq:pattern-time}
T=\inf\{t>0:\ u_1(t)=1 \ \text{and}\ u_1'(t)<0\},
\end{align}
is well-defined and finite, and
\begin{align}\label{eq:est-u-main}
\begin{cases}
\ln\delta_1^- - \lambda^- \ln\varepsilon
\lesssim
\dfrac{\ln\big(u_2(T)\big)}{1+\tfrac1\varepsilon}
\lesssim
\ln\delta_1^+ - \lambda^+ \ln\varepsilon,\\
\ln\big(1-u_3(T)\big)\lesssim \ln\varepsilon,
\end{cases}
\end{align}
for some constants $\lambda^\pm$ satisfying $0<\lambda^+<\lambda^-\lesssim1$. Moreover, there exists $\hat T\in(0,T)$ such that
\begin{align}\label{eq:est-u2}
u_2(\hat T)\ge k_0\coloneqq \frac12\Bigl(1-k_{23}+\frac1{k_{32}}\Bigr)>0,
\end{align}
and the pattern time $T$ and the maximum of $u_1$ satisfy respectively
\begin{align}\label{eq:est-That-T}
-\ln\delta_{1}^{+}+\kappa\ln\varepsilon
\lesssim T\lesssim -\ln\delta_{1}^{-}-\ln\varepsilon,\quad  
\text{ and } \quad 
%\end{align}
%and
%\begin{align}%\label{eq:est-max-u1}
\tfrac{1}{\varepsilon}\lesssim\max_{t\in[0,T]}u_1(t)<\tfrac{1}{\varepsilon},
\end{align}
for some constant $0<\kappa\lesssim 1$ depending only on $(k_{ij})$.
\end{theorem}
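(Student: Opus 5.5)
The proof splits $[0,T]$ into successive phases separated by hitting times of explicit thresholds. In each phase the system~\eqref{eq:system} is compared with scalar or planar subsystems obtained by freezing the small components. Logistic and exponential comparison arguments then give upper and lower bounds on the durations and on the exit values. All constants depend only on $(k_{ij})$; the dependence on $\varepsilon$ and $\delta_1^\pm$ is tracked explicitly through logarithms.

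\emph{Phase 1 (escape from $E_3$).} At $t=0$ we have $u_1=1$, $u_3\approx1$, and $u_2$ tiny. Then $u_1'/u_1\approx 1-k_{13}<0$, so $u_1$ decreases exponentially. Meanwhile $u_2'/u_2\approx 1-k_{23}-u_1>-k_{23}$ becomes positive once $u_1$ has decayed. Hence $u_2$ grows like $\delta_1 e^{(1-k_{23})t}$, up to the bounded initial transient. This continues until $u_2$ reaches an order-one level, at a time $t_1\simeq -\ln\delta_1$. During this phase $u_3$ stays close to $1$ and $u_1$ decays to size $e^{(1-k_{13})t_1}$.

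\emph{Phase 2 (passage near $E_2$).} With $u_1$ negligible, the pair $(u_2,u_3)$ follows the planar $u_2$--$u_3$ competitive system. Because $k_{32}>1>k_{23}$, it goes to $E_2$. Using~\eqref{eq:cond-sys-2} together with a comparison of $u_3$ against $u_3'\le u_3(1-k_{32}u_2-u_3)$, one shows that $u_2$ exceeds $k_0$ at some time $\hat T$; this gives~\eqref{eq:est-u2}. After that, $u_3$ decays exponentially at rate $k_{32}-1$. Near $E_2$, $u_1'/u_1\approx 1-k_{12}>0$, so $u_1$ regrows from its tiny value $e^{-c\,t_1}$ and returns to order one after a further time of order $t_1$.

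\emph{Phase 3 (climb to $E_1$).} Once $u_1=O(1)$, the equation for $u_2$ contains $-u_1$ with coefficient $1$. Since $u_1$ grows logistically toward $1/\varepsilon$, $u_2$ collapses: $\ln u_2$ decreases at rate about $u_1\sim 1/\varepsilon$ over an $O(|\ln\varepsilon|)$ time window. This is where the factor $1+\tfrac1\varepsilon$ in~\eqref{eq:est-u-main} comes from. We have $\max u_1<1/\varepsilon$ because $u_1$ lies below the logistic solution with carrying capacity $1/\varepsilon$. The lower bound $\max u_1\gtrsim1/\varepsilon$ follows because $u_1$ keeps increasing until $u_3$ has grown enough to make $1-\varepsilon u_1-k_{13}u_3$ vanish.

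\emph{Phase 4 (return toward $E_3$).} Near $E_1$ the growth rate of $u_3$ is $1-\varepsilon k_{31}u_1\approx 1-k_{31}>0$, so $u_3$ regrows from its small value. Then $u_1'/u_1\approx 1-\varepsilon u_1-k_{13}u_3$ becomes negative, and $u_1$ decreases back through $1$ with $u_1'<0$; this defines $T$. The condition $k_{13}(1-k_{31})>1$ is what makes $u_1$ decrease to $1$ while $u_3$ is still close to $1$, and it yields $\ln(1-u_3(T))\lesssim\ln\varepsilon$ via the logistic approach of $u_3$ to $1$ on an $O(|\ln\varepsilon|)$ time scale. Adding up the phase durations gives~\eqref{eq:est-That-T}. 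Integrating $\ln u_2$ over all phases gives~\eqref{eq:est-u-main}: the dominant contribution is $-\int u_1\,dt\approx-\tfrac1\varepsilon\cdot(\text{duration near }E_1)$, and the constants $\lambda^\pm$ come from the upper and lower bounds on that duration in terms of $\ln\delta_1$ and $\ln\varepsilon$.

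\emph{Main obstacle.} The delicate part is Phase 3--4. There the singular variable $u_1\sim1/\varepsilon$ couples to $u_3$ through $\varepsilon k_{31}u_1$, and both the time spent near $E_1$ and $\int u_1$ must be controlled sharply enough to get matching logarithmic bounds with $\lambda^+<\lambda^-$. I would handle it with sub- and supersolutions for $w=\varepsilon u_1$. This quantity satisfies an $O(1)$ Lotka--Volterra system with $u_3$, perturbed by $u_2$ and $\varepsilon$. I would then compare it with the explicit $w$--$u_3$ heteroclinic connection to bound the durations uniformly. Choosing $\delta_1^0(\varepsilon)$ small enough that Phase 1 is long compared with $|\ln\varepsilon|$ ensures that all error terms are dominated.
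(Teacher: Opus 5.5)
Your outline is correct and follows the paper's strategy. The paper also cuts $[0,T]$ at hitting times of explicit thresholds, nine of them: $T_{11}<T_{12}<T_1<T_{21}<T_2<T_{31}<T_{32}<T_{33}<T_3=T$. It controls each piece by inequalities $\rho_1\le u_i'/u_i\le\rho_2$ or by logistic comparison, and takes $\hat T=T_1$, the first time $u_2=\nu_3=k_0$. It gets the factor $1+\frac1\varepsilon$ from the dwell near $E_1$, where $u_1\ge\frac{1-k_{12}}{4\varepsilon}$ while $u_3$ regrows from a size $\approx\delta_1^{c}\varepsilon^{-\lambda}$ at a rate between $\frac{1-k_{31}}{4}$ and $1$.

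You diverge only in the device for Phases 3--4. The paper never introduces $w=\varepsilon u_1$ or the $(w,u_3)$ heteroclinic; it keeps using the same scalar comparisons, in three steps.

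First, it chooses $\delta_1^0(\varepsilon)$ so that $k_{13}u_3<\frac{1-k_{12}}{2}$ during the whole $O(|\ln\varepsilon|)$ climb of $u_1$ from $2$ to $\frac{1-k_{12}}{4\varepsilon}$. This quantitative smallness is what yields $\max u_1\gtrsim 1/\varepsilon$, not the heuristic that $u_1$ grows until $1-\varepsilon u_1-k_{13}u_3$ vanishes.

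Second, it moves $u_3$ from $\nu_6$ to $\nu_7=\frac12(1-k_{31}+\frac1{k_{13}})>\frac1{k_{13}}$ in bounded time. This is where $k_{13}(1-k_{31})>1$ is used.

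Third, for $1-u_3(T)$ it waits until $u_1=\varepsilon^{-1/2}$. After that, $\varepsilon k_{31}u_1$ and $k_{32}u_2$ are $O(\varepsilon^{c})$, and the remaining time, which is $\gtrsim|\ln\varepsilon|$, lets a logistic lower bound bring $u_3$ within a power of $\varepsilon$ of $1$.

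Your heteroclinic comparison is heavier than necessary and slightly misstated. The $(w,u_3)$ connection is not explicit. The passage near $E_1$ also cannot be bounded uniformly, since its length $\simeq-\ln\delta_1+\lambda\ln\varepsilon$ is exactly what produces $\lambda^\pm$. At best the comparison would control the transit legs between fixed sections, and the saddle passage would still need the scalar estimates above.
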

Theorem~\ref{thm:main} gives a quantitative description of one complete
pattern of the dynamics. Starting from the section $u_1=1$, with $u_2(0)$ small and $u_3(0)$ close to $1$, the solution returns in finite time to the same section $u_1=1$ with $u_1'(T)<0$. At this return time, the second component $u_2(T)$ is still small, with explicit logarithmic upper and lower bounds, while the third component $u_3(T)$ remains close to $1$. The theorem also shows that the pattern is nontrivial: although $u_2$ is small both at the beginning and at the return time, {it grows to a fixed positive size during the pattern}, namely $u_2(\hat T)\ge k_0>0$ for some $\hat T\in(0,T)$. Moreover, the first component reaches the singular
scale $1/\varepsilon$, and the duration of the pattern is controlled
quantitatively in terms of $\delta_1^\pm$ and $\varepsilon$.
\begin{proposition}\label{thm:corner-visits}
Assume that the coefficients $(k_{ij})$ satisfy \eqref{eq:cond-sys},
\eqref{eq:cond-sys-2}--\eqref{eq:cond-sys-2-eps}. Let
$\delta_2^0,\varepsilon_0$ and $\delta_1^0(\varepsilon)$ be given by
Theorem~\ref{thm:main}. Up to reducing $\delta_1^0(\varepsilon)$ if necessary,
we still denote by $\delta_1^0(\varepsilon)$ the corresponding threshold.
Let $\xi\in(0,1)$ be arbitrary.

For every $0<\varepsilon\le \varepsilon_0$, every
$0<\delta_2\le \delta_2^0$, and every initial datum satisfying
\begin{align*}
u_1(0)=1,\qquad 0<u_2(0)\le \delta_1^0(\varepsilon),\qquad 1-\delta_2\le u_3(0)<1,
\end{align*}
let $u=(u_1,u_2,u_3)$ denote the corresponding solution of \eqref{eq:system}.
Define recursively
\begin{align*}
t_0:=0,\qquad
t_{n+1}:=\inf\{t>t_n:\ u_1(t)=1 \text{ and } u_1'(t)<0\}.
\end{align*}
Then there exists an integer $N_\xi\ge 1$ such that, for every $n\ge N_\xi$,
there exist times
\begin{align*}
\tau_{3,n}^\xi<\tau_{2,n}^\xi<\tau_{1,n}^\xi
\qquad\text{in }(t_n,t_{n+1}),
\end{align*}
for which
\begin{align*}
\left\|u(\tau_{3,n}^\xi)-E_{3}\right\|_\infty\le \xi,
\qquad
\left\|u(\tau_{2,n}^\xi)-E_{2}\right\|_\infty\le \xi,
\qquad
\left\|u(\tau_{1,n}^\xi)-E_{1}\right\|_\infty\le \xi.
đ\end{align*}
\end{proposition}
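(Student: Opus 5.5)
We argue by iterating Theorem~\ref{thm:main} and using the quantitative information it gives at the return section; the goal is to show that the deviations from the corners shrink from one pattern to the next.

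\textbf{Step 1: the iteration is well defined.} Given the datum at $t_n$, with $u_1(t_n)=1$, $u_2(t_n)\le\delta_1^0(\varepsilon)$ and $1-\delta_2\le u_3(t_n)<1$, Theorem~\ref{thm:main} (with $\delta_1^\pm$ chosen around $u_2(t_n)$) yields $t_{n+1}<\infty$. The estimate \eqref{eq:est-u-main} gives
\[
\ln u_2(t_{n+1})\lesssim \bigl(1+\tfrac1\varepsilon\bigr)\bigl(\ln u_2(t_n)-\lambda^+\ln\varepsilon\bigr),
\qquad
1-u_3(t_{n+1})\le \varepsilon^{c}
\]
for some $c>0$. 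Since the factor $1+1/\varepsilon$ is large, reducing $\delta_1^0(\varepsilon)$ so that $\ln\delta_1^0(\varepsilon)-\lambda^+\ln\varepsilon$ is sufficiently negative makes $\ln u_2(t_{n+1})\le 2\ln u_2(t_n)$, say. Hence $u_2(t_{n+1})\le u_2(t_n)\le\delta_1^0(\varepsilon)$. Reducing $\varepsilon_0$ gives $\varepsilon^{c}\le\delta_2^0$. The admissibility conditions are then inherited, and by induction all $t_n$ are finite. Moreover $\eta_n:=u_2(t_n)\to0$, in fact doubly exponentially.

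\textbf{Step 2: locating the three corner visits inside each pattern.} For the visit near $E_3$, take $\tau_{3,n}=t_n$ itself. There $u_1=1$, which is not small in the rescaled variables, so this choice fails as it stands. Instead we use a short backward or forward window around the entrance. Since $u_2(t_n)\to0$, $u_3(t_n)\to1$ and $u_1(t_n)=1$, the relevant point is just after $t_n$, when $u_1$ decreases. This happens because $1-k_{13}u_3<0$ near $E_3$, so $u_1$ decays exponentially at rate about $k_{13}-1$, while $u_2$ remains tiny because it grows from $\eta_n$ at rate at most $1-k_{23}$. After a time of order $\ln(1/\xi)$, independent of $n$, we get $u_1\le\xi$ while $u_2\le\eta_n e^{C\ln(1/\xi)}\le\xi$ for $n$ large, and $u_3$ stays $\xi$-close to $1$ by continuous dependence on the limiting boundary dynamics. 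This produces $\tau_{3,n}$.

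For $E_2$, we use the time $\hat T_n$ of \eqref{eq:est-u2}, where $u_2\ge k_0$, and then follow the subsequent phase from the proof of Theorem~\ref{thm:main}. On the face $u_1=0$ the $(u_2,u_3)$-subsystem converges to $(1,0)$ because $k_{32}>1>k_{23}$. Meanwhile $u_1$ is very small, since it was pushed down to size $\exp(-c\,|\ln\eta_n|)$ during the long stay near $E_3$, so the trajectory shadows this planar flow for any fixed time. Hence $\|u-E_2\|_\infty\le\xi$ at some $\tau_{2,n}>\tau_{3,n}$ once $n$ is large.

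For $E_1$, \eqref{eq:est-That-T} gives $\max u_1\gtrsim1/\varepsilon$. I would refine this to show that $u_1$ enters $[1/\varepsilon-\xi,1/\varepsilon)$ while $u_2,u_3\le\xi$. Near $E_1$ both $u_2$ and $u_3$ decay, since $1-1/\varepsilon<0$ and $1-k_{31}<\ldots$ in the original scaling, $v_3$ decays at rate $k_{31}-1$\ldots{} Actually $k_{31}<1$, so $v_3$ grows, which is the exit toward $E_3$. Because $v_3$ entered at a size tending to zero with $n$, the stay near $E_1$ is long and $v_1\to1$, that is $u_1\to1/\varepsilon$ in norm.

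\textbf{Step 3: the main obstacle.} The hardest point is uniformity. Theorem~\ref{thm:main} controls only logarithmic quantities, whereas here we need genuine $\xi$-closeness in the sup-norm, in particular $|u_1-1/\varepsilon|\le\xi$ near $E_1$. My plan is to combine the entrance sizes, which are tiny and go to zero as $n\to\infty$ by Step 1, with finite-time shadowing of the boundary heteroclinic orbits, the connections $\Gamma_{32}$, $\Gamma_{21}$, $\Gamma_{13}$ in the invariant faces, together with the linear saddle estimates near each $E_i$. Since the entrance sizes go to zero, the passage times near each corner diverge, and the orbit is forced into any fixed neighborhood of $E_i$. The threshold $N_\xi$ is then chosen so that $\eta_n$ and $1-u_3(t_n)$ are below the levels dictated by $\xi$ in the estimates above.
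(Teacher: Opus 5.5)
Your Step~1 is the paper's induction in different notation: the paper writes $\delta^+_{1,n+1}=C_\varepsilon(\delta^+_{1,n})^{p_\varepsilon}\le\frac12\delta^+_{1,n}$, you write $\ln\eta_{n+1}\le 2\ln\eta_n$. For $E_3$ and $E_2$, however, you take a genuinely different route. The paper shrinks the thresholds of Lemmas~\ref{lem:main-12}, \ref{lem:main-21} and \ref{lem:main-32} to $\min\{\nu_2,\nu^{(3)}_\xi\}$, $\min\{\nu_4,\nu_\xi\}$ and $\min\{\nu_6,\nu^{(1)}_{\xi,\varepsilon}\}$. The resulting intervals, on which the transverse component is small, have lengths tending to $+\infty$ as $\delta^+_{1,n}\to0$. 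It then invokes Lemma~\ref{lem:small-u1} and Remark~\ref{rem:symmetric}, proved with the ratio $\lambda=u_3/u_2^{\alpha}$, which turn a long stay with a small transverse component into a corner visit.

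You instead use windows of $n$-independent length starting at $t_n$ and at $\hat T_n=T_{1,n}$, where the transverse component ($u_2$, resp.\ $u_1$) tends to $0$ with $n$. You conclude by Gronwall and the global convergence to the winning equilibrium on the invariant face. Only for $E_1$ do you use the paper's mechanism: the unstable component $u_3$ enters at a size tending to $0$ (cf.\ Lemma~\ref{lem:main-31}), so the stay near $E_1$ diverges. Your route is softer and avoids the Lyapunov-type computation. It does, however, need a compactness argument you leave implicit, namely a uniform entry time for all face initial data in a compact set such as $\{u_1=0,\ u_2\ge k_0\}$. It also yields only isolated times. The paper's lemma is quantitative, gives closeness on a whole interval $[\hat T_\delta,T_\delta]$, and its ratio computation is reused in Lemma~\ref{lem:close-gamma32}.

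Three points need fixing:
\begin{itemize}
\item Your claim $u_3(t_n)\to1$ is false. Theorem~\ref{thm:main} only gives $1-u_3(t_n)\le\varepsilon^{a}$ with $\varepsilon$ fixed. In fact, by Theorem~\ref{thm:omega-limit-cycle}, $u(t_n)$ accumulates on $\Gamma_{13}\cap\{u_1=1\}$, where $u_3<1$. So $u_3$ must \emph{become} $\xi$-close to $1$ inside your window, via a logistic comparison once $u_1$ has decayed and $u_2$ is still tiny. This works because the window length depends only on $\xi$ and $\varepsilon$.
\item Near $E_1$ you need $u_2,u_3$ below a level of order $\varepsilon\xi$, not $\xi$, because $u_1$ relaxes to about $(1-k_{12}u_2-k_{13}u_3)/\varepsilon$. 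This is why the paper takes $\nu^{(1)}_{\delta,\varepsilon}\le\delta\varepsilon/4$.
\item In Step~3 you should not appeal to $\Gamma_{32},\Gamma_{21},\Gamma_{13}$, since the paper constructs them from this proposition. At this stage only the planar two-species dynamics on each invariant face is available.
\end{itemize}
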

Proposition~\ref{thm:corner-visits} turns the one-pattern description of Theorem~\ref{thm:main} into a repeated-visit property. It shows that the return construction can be iterated and that, for every prescribed neighborhood size $\xi>0$, all sufficiently large patterns visit the three boundary equilibria successively in the order $E_3\to E_2\to E_1$. This is the first global manifestation of the cyclic dominance mechanism and prepares the construction of the heteroclinic cycle in the next theorem.

The following results describe the asymptotic structure generated by the
one-pattern mechanism. Theorem~\ref{thm:heteroclinic-connections} first proves
the existence of three heteroclinic connections joining the equilibria in the
cyclic order
\begin{align*}
E_3\longrightarrow E_2\longrightarrow E_1\longrightarrow E_3.
\end{align*}
We denote by
\begin{align*}
\Gamma
:=
\Gamma_{32}\cup\Gamma_{21}\cup\Gamma_{13}\cup\{E_1,E_2,E_3\},
\end{align*}
the corresponding heteroclinic cycle.

\begin{theorem}[Existence of the heteroclinic cycle]
\label{thm:heteroclinic-connections}
Assume that the coefficients $(k_{ij})$ satisfy \eqref{eq:cond-sys},
\eqref{eq:cond-sys-2}--\eqref{eq:cond-sys-2-eps}. Fix
$0<\varepsilon\le \varepsilon_0$. Then there exist three global solutions
\begin{align*}
\gamma^{32},\qquad \gamma^{21},\qquad \gamma^{13}
\in C^1(\mathbb R;\mathbb R_+^3)
\end{align*}
of \eqref{eq:system}. For $(i,j)\in\{(3,2),(2,1),(1,3)\}$, we write
\begin{align*}
\gamma^{ij}(t)
=
\bigl(\gamma^{ij}_1(t),\gamma^{ij}_2(t),\gamma^{ij}_3(t)\bigr),
\qquad t\in\mathbb R,
\end{align*}
so that $\gamma^{ij}_\ell$ denotes the $\ell$-th component of $\gamma^{ij}$.
These solutions satisfy
\begin{align*}
\gamma^{32}_1\equiv0,\qquad
\gamma^{21}_3\equiv0,\qquad
\gamma^{13}_2\equiv0,
\end{align*}
and
\begin{align*}
\gamma^{32}(t)\to E_3\quad\text{as }t\to-\infty,
\qquad
\gamma^{32}(t)\to E_2\quad\text{as }t\to+\infty,
\end{align*}
\begin{align*}
\gamma^{21}(t)\to E_2\quad\text{as }t\to-\infty,
\qquad
\gamma^{21}(t)\to E_1\quad\text{as }t\to+\infty,
\end{align*}
\begin{align*}
\gamma^{13}(t)\to E_1\quad\text{as }t\to-\infty,
\qquad
\gamma^{13}(t)\to E_3\quad\text{as }t\to+\infty.
\end{align*}
In particular, the system \eqref{eq:system} admits the heteroclinic cycle
\begin{align*}
E_3\longrightarrow E_2\longrightarrow E_1\longrightarrow E_3.
\end{align*}
\end{theorem}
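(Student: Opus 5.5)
The plan is to reduce each connection to a planar two-species competitive Lotka--Volterra system on an invariant coordinate face, and to use the classical phase-plane picture there. Each coordinate plane $\{u_\ell=0\}$ is invariant for \eqref{eq:system}, because the $\ell$-th equation has the factor $u_\ell$. On each face we will take the unstable manifold of the source equilibrium and show that it converges to the target equilibrium.

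\emph{Face $u_1=0$ (connection $E_3\to E_2$).} The system reduces to $u_2'=u_2(1-u_2-k_{23}u_3)$ and $u_3'=u_3(1-k_{32}u_2-u_3)$. Since $k_{23}<1<k_{32}$, species~2 outcompetes species~3, and there is no interior equilibrium in the open quadrant. At $E_3=(0,0,1)$ the planar linearization has eigenvalue $-1$ along the $u_3$-axis and $1-k_{23}>0$ in the $u_2$ direction. The planar unstable manifold is therefore one-dimensional, and one branch of it enters the open quadrant. We define $\gamma^{32}$ as the solution on this branch, so $\gamma^{32}(t)\to E_3$ as $t\to-\infty$. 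The forward orbit is bounded, since $u_2,u_3\le\max(1,\cdot)$ by comparison with the logistic equation. The quadrant is positively invariant, and the ratio $u_2/u_3$ is increasing by the Dulac/monotone structure. Hence the orbit converges to an equilibrium of the face. The possible equilibria are $0$, $E_2$ and $E_3$. The limit cannot be $0$, which is repelling, and it cannot be $E_3$, since along the orbit $u_2>0$ and $(\ln(u_2/u_3))'=(1-k_{23})u_3+(k_{32}-1)u_2>0$ for the relevant quantities. This derivative is positive, so $u_2/u_3$ increases to $+\infty$ or to a limit that would have to be an equilibrium with $u_2>0$. Either way the limit is $E_2$.

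\emph{Face $u_3=0$ ($E_2\to E_1$).} The system becomes $u_1'=u_1(1-\varepsilon u_1-k_{12}u_2)$ and $u_2'=u_2(1-u_1-u_2)$. At $E_2$ the $u_1$-eigenvalue is $1-k_{12}>0$. For $E_1=(1/\varepsilon,0,0)$, the $u_2$-eigenvalue there is $1-1/\varepsilon<0$. The nullclines $\varepsilon u_1+k_{12}u_2=1$ and $u_1+u_2=1$ do not cross in the open quadrant: the intercepts satisfy $1/\varepsilon>1$ on the $u_1$-axis and $1/k_{12}>1$ on the $u_2$-axis, and both nullclines are straight lines, so the $u_1$-nullcline lies entirely above the other. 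This is where \eqref{eq:cond-sys-2-eps} enters through $\varepsilon<1$. With no interior equilibrium, the same monotonicity argument applied to $\ln(u_1/u_2)$ yields convergence to $E_1$.

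\emph{Face $u_2=0$ ($E_1\to E_3$).} The system becomes $u_1'=u_1(1-\varepsilon u_1-k_{13}u_3)$ and $u_3'=u_3(1-\varepsilon k_{31}u_1-u_3)$. In the original variables this is the standard competition problem with $k_{31}<1<k_{13}$, so species~3 wins and the connection follows in the same way.

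The only delicate point is the absence of an interior equilibrium on each face, together with showing that the $\omega$-limit of the unstable branch is the target corner. I would settle this uniformly with the Poincaré--Bendixson theorem combined with the Dulac function $1/(u_iu_j)$, which excludes periodic orbits in competitive planar Lotka--Volterra systems. This leaves only the corner equilibria as possible limits, and the sign of the eigenvalues rules out everything but the target.
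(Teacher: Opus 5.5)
Your proof is correct, but it takes a genuinely different route from the paper. The paper never uses unstable manifolds. It produces each connection as a compactness limit of the trajectory itself: it shifts $u$ by the intermediate times $T_{1,n_k}$ (resp.\ $T_{2,n_k}$, $T_{33,n_k}$) and uses Proposition~\ref{thm:corner-visits} and Lemma~\ref{lem:small-transverse-components} to show that the transverse component vanishes in the limit while the limit passes through an interior point of the face. It then concludes with Bendixson--Dulac on that face.

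You instead take the one-dimensional unstable manifold of the source corner inside the invariant face and push it forward with a strictly monotone ratio. Your ratio computations are right. On the face $\{u_2=0\}$ the analogous identity is $(\ln(u_3/u_1))'=\varepsilon(1-k_{31})u_1+(k_{13}-1)u_3>0$. The forward-convergence step is just the classical two-species exclusion result the paper already quotes in Section~3, so you may simply cite it. If you derive it, make the order explicit:
\begin{itemize}
\item the monotone ratio puts the $\omega$-limit set on the boundary;
\item the positive lower bound on the ratio keeps it off the axis containing the source corner, except possibly at the origin;
\item the origin is excluded as a source.
\end{itemize}

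Your route buys three things. First, it uses only $\varepsilon<1$ and the sign conditions in \eqref{eq:cond-sys}; it does not need \eqref{eq:cond-sys-2}, $\varepsilon\le\varepsilon_0$, or any one-pattern estimate. Second, it gives uniqueness of each connection, since every corner has a one-dimensional unstable manifold even in the full system. The paper invokes this uniqueness without proof in the proof of Theorem~\ref{thm:assymp-sol-hetero}, when it identifies the limit orbit with $\Gamma_{32}$. Third, you get the backward limit for free. In the compactness route the origin of the face is also a source, hence a competing candidate for the $\alpha$-limit of the limit orbit. It must be excluded with extra information from the trajectory, for instance the lower bound on $u_3$ on $[T_{11,n},T_{12,n}]$ from Lemma~\ref{lem:main-12}.

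The paper's route, in turn, produces the connections directly as limits of the orbit under study. That is the same mechanism it reuses for the shadowing statement.
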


Theorem~\ref{thm:heteroclinic-connections} identifies the geometric object organizing the long-time dynamics. It proves the existence of three heteroclinic connections joining $E_3$ to $E_2$, $E_2$ to $E_1$, and $E_1$ back to $E_3$. These three connections form the heteroclinic cycle $\Gamma$.

For each pattern interval $[t_n,t_{n+1}]$, we denote by
$T_{1,n}$, $T_{2,n}$, and $T_{33,n}$ the corresponding transition times
defined as in the proof of Theorem~\ref{thm:main}.

{The following two results describe how the one-pattern dynamics given by Theorem~\ref{thm:main} organizes the global asymptotic behavior. The first one gives a local shadowing description near each heteroclinic branch, after suitable time shifts. The second one identifies the omega-limit set of the solution.}

\begin{theorem}\label{thm:assymp-sol-hetero}
Under the assumptions of Theorem~\ref{thm:main}, let \(u\) be the corresponding
solution of \eqref{eq:system}. Let
\(\Gamma_{32},\Gamma_{21},\Gamma_{13}\) be the three heteroclinic orbits given
by Theorem~\ref{thm:heteroclinic-connections}. Then, for every \(A>0\),
\begin{align}
\sup_{|s|\le A}
d\bigl(u(T_{1,n}+s),\Gamma_{32}\bigr)
&\xrightarrow[n\to+\infty]{}0, \label{eq:geom-conv-32}\\
\sup_{|s|\le A}
d\bigl(u(T_{2,n}+s),\Gamma_{21}\bigr)
&\xrightarrow[n\to+\infty]{}0, \label{eq:geom-conv-21}\\
\sup_{|s|\le A}
d\bigl(u(T_{33,n}+s),\Gamma_{13}\bigr)
&\xrightarrow[n\to+\infty]{}0. \label{eq:geom-conv-13}
\end{align}
Here $d(x,\Gamma):=\inf_{y\in\Gamma}\|x-y\|_\infty$.
\end{theorem}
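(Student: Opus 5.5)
The plan is to combine two ingredients: the quantitative one-pattern estimates of Theorem~\ref{thm:main}, iterated along the pattern intervals $[t_n,t_{n+1}]$, and continuous dependence of solutions of \eqref{eq:system} on initial data over compact time intervals $[-A,A]$. I will treat \eqref{eq:geom-conv-32}; the other two branches follow the same scheme.

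\textbf{Step 1: the small component tends to zero at $T_{1,n}$.} I will interpret $T_{1,n}$ as the transition time in the proof of Theorem~\ref{thm:main} at which $u_2$ crosses a fixed threshold $\theta\in(0,1)$ (for example $\theta=k_0$ or $1/2$). At this time the trajectory leaves the vicinity of $E_3$ along the face $\{u_1=0\}$.

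The one-pattern estimates show that $u_2(t_n)\to0$ as $n\to\infty$. Indeed, iterating \eqref{eq:est-u-main} with $\delta_1^\pm$ replaced by $u_2(t_n)$ gives $\ln u_2(t_{n+1})\lesssim (1+1/\varepsilon)(\ln u_2(t_n)-\lambda^+\ln\varepsilon)$. Since the factor $1+1/\varepsilon$ is large, $\ln u_2(t_n)\to-\infty$, up to reducing $\delta_1^0(\varepsilon)$ as in Proposition~\ref{thm:corner-visits}.

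Next I will show that $u_1(T_{1,n})\to0$. Between $t_n$ and $T_{1,n}$ we have $u_3\approx1$ and $k_{13}>1$, so $u_1$ decays exponentially at rate at least $k_{13}(1-\delta)-1>0$. The elapsed time $T_{1,n}-t_n\gtrsim -\ln u_2(t_n)\to\infty$, because $u_2$ grows at most at rate $1$. Hence $u_1(T_{1,n})\lesssim e^{-c(T_{1,n}-t_n)}\to0$.

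\textbf{Step 2: compactness.} The sequence $u(T_{1,n})$ lies in a compact set (the solution is bounded by invariance of a box). Take any subsequential limit $y$. It satisfies $y_1=0$ and $y_2=\theta$.

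I claim that the backward limit of the solution of the two-dimensional subsystem $\{u_1=0\}$ through $y$ is $E_3$. This follows from the fact that on the face $u_1=0$ the $(u_2,u_3)$-system with $k_{32}>1$ and $k_{23}<1$ sends every interior point to $E_2$. Consequently, every interior orbit in that face with $u_2=\theta$ is a heteroclinic orbit from $E_3$ to $E_2$.

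I will check uniqueness, namely that the orbit through $y$ is $\Gamma_{32}$. The unstable manifold of $E_3$ within the face is one-dimensional, and its eigenvalue $1-k_{23}>0$. I must also ensure that $y_3$ is such that the backward orbit tends to $E_3$ rather than to $0$ or anywhere else. To do this I use that before $T_{1,n}$ the trajectory stays in a small neighborhood of the orbit near $E_3$, with $u_3$ near its slow-manifold value. Concretely, the quantity $|u_3-(1-k_{23}u_2)\cdots|$ is controlled by the estimate $1-u_3(t_n)\lesssim\varepsilon^{c}$ from \eqref{eq:est-u-main}, iterated so that it goes to $0$. Hence $y\in\Gamma_{32}$.

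\textbf{Step 3: shadowing on $[-A,A]$ and main obstacle.} Continuous dependence on initial data on $[-A,A]$ gives $\sup_{|s|\le A}\|u(T_{1,n}+s)-\varphi(s;y)\|_\infty\to0$ along the subsequence, where $\varphi(s;y)$ denotes the solution through $y$. Since $\Gamma_{32}$ is invariant, $\varphi(s;y)\in\Gamma_{32}$ for all $s$. Because every subsequence has a further subsequence along which the supremum tends to $0$, the whole sequence converges.

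The two remaining branches are handled in the same way:
\begin{itemize}
\item at $T_{2,n}$, use $u_3\to0$ (it is driven down by $u_2\approx1$ with $k_{32}>1$) and the face $\{u_3=0\}$;
\item at $T_{33,n}$, use $u_2\to0$ from Step 1 and the face $\{u_2=0\}$.
\end{itemize}

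The main obstacle is Step 2. There I must prove that the limit point lies exactly on the unstable manifold of $E_3$, which requires that the transverse distance to $\Gamma_{32}$ (essentially $1-u_3$ along the section) tends to zero across patterns. This must be extracted from the inner estimates of the proof of Theorem~\ref{thm:main}.
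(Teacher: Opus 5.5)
Your scheme (a limit point $y$ of $u(T_{1,n})$ on the face $\{u_1=0\}$, continuous dependence on $[-A,A]$, then a sub-subsequence argument) is an equivalent packaging of the paper's Arzel\`a--Ascoli-plus-contradiction argument. Steps 1 and 3 are fine once you add $y_3\ge\eta>0$ from Lemma~\ref{prop:main-1}. The gap is Step 2, and neither argument you give there works.

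First, forward convergence of interior points of the face to $E_2$ says nothing about backward limits. On $\{u_1=0\}$ the origin is an unstable node (eigenvalues $1,1$), and $E_3$ is a saddle whose unstable manifold has a single branch in the open quadrant. Every interior orbit other than that branch emanates from the origin. So a point with $y_1=0$, $y_2=\theta$, $y_3>0$ generically lies on an orbit from $0$ to $E_2$, not on $\Gamma_{32}$.

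Second, your fallback that $1-u_3(t_n)\to0$ is false. Iterating $1-u_3(t_{n+1})\lesssim\varepsilon^c$ with $\varepsilon$ fixed only reproduces the same bound. In fact, on $[T_{33,n},t_{n+1}]$ one has $u_1\ge1$, hence $u_3'\le u_3(1-\varepsilon k_{31}-u_3)$. Since $u_3(T_{33,n})=\nu_7<1-\varepsilon k_{31}$, this forces $1-u_3(t_{n+1})\ge\varepsilon k_{31}$ for every $n$.

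What you need is not smallness of $1-u_3$ at the section. You need a uniform positive lower bound on $u_3$ over a backward window of diverging length, and the one-pattern lemmas give this directly:
\begin{itemize}
\item $u_3>\frac12(1+\frac1{k_{13}})$ on $[t_n,T_{12,n}]$ by Lemmas~\ref{lem:main-11}--\ref{lem:main-12};
\item $T_{1,n}-T_{12,n}\le T_1^+$ and $u_3'/u_3\ge-(k_{31}+k_{32})$, so $u_3\ge c>0$ on all of $[t_n,T_{1,n}]$;
\item $T_{1,n}-t_n\ge T_{12,n}-T_{11,n}\to+\infty$.
\end{itemize}
By continuous dependence, the face solution through $y$ satisfies $u_3\ge c$ on $(-\infty,0]$, so its $\alpha$-limit set is not the origin. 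The face has no periodic orbits (Bendixson--Dulac), there is no heteroclinic cycle among its boundary equilibria, and $E_2$ is a sink. Hence the $\alpha$-limit is $E_3$, and the orbit is the unique unstable branch $\Gamma_{32}$.

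For the other two branches the same argument works with:
\begin{itemize}
\item $u_2\ge\nu_3$ on $[T_{1,n},T_{21,n}]$, followed by an interval of bounded length, for $\Gamma_{21}$;
\item $u_1\ge\frac{1-k_{12}}{4\varepsilon}$ on $[T_{31,n},T_{32,n}]$, followed by an interval of bounded length, for $\Gamma_{13}$.
\end{itemize}

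Note that the paper's own proof makes the same leap. It concludes $w^\infty(\mathbb R)=\Gamma_{32}$ from the absence of interior equilibria and Bendixson--Dulac alone, which does not exclude the origin either. So this lower-bound argument is the missing ingredient in both versions.
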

\allowdisplaybreaks
\begin{theorem}\label{thm:omega-limit-cycle}
Under the assumptions of Theorem~\ref{thm:main}, let $u$ be the corresponding
solution of \eqref{eq:system}. Let $\Gamma_{32},\Gamma_{21},\Gamma_{13}$ are the three heteroclinic orbits
given by Theorem~\ref{thm:heteroclinic-connections}. Then
\begin{align*}
d(u(t),\Gamma)
:=
\inf_{y\in\Gamma}\|u(t)-y\|_\infty
\xrightarrow[t\to+\infty]{}0.
\end{align*}
Moreover, the omega-limit set of \(u\), is exactly the heteroclinic cycle, i.e. $\omega(u)=\Gamma$.
\end{theorem}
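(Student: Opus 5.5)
We prove the statement in two steps: first $d(u(t),\Gamma)\to0$ as $t\to+\infty$, then the identification $\omega(u)=\Gamma$. The main inputs are Theorem~\ref{thm:main}, iterated along the return times $t_n$ of Proposition~\ref{thm:corner-visits}, and the shadowing result Theorem~\ref{thm:assymp-sol-hetero}.

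\emph{Step 1: the iteration and its small parameters.} Theorem~\ref{thm:main} applies at each $t_n$ with $u_2(t_n)$ in place of $u_2(0)$, and the estimate \eqref{eq:est-u-main} shows that the iterated data stay admissible. Indeed, \eqref{eq:est-u-main} gives $\ln u_2(t_{n+1})\lesssim (1+1/\varepsilon)\bigl(\ln u_2(t_n)-\lambda^+\ln\varepsilon\bigr)$. This forces $u_2(t_n)\to0$ super-exponentially, as long as $\delta_1^0(\varepsilon)$ is small enough that $\ln\delta_1^0-\lambda^+\ln\varepsilon$ is sufficiently negative. Likewise $1-u_3(t_n)\le C\varepsilon^{c}$ remains below $\delta_2^0$ once $\varepsilon_0$ is small. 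By \eqref{eq:est-That-T}, the pattern lengths then satisfy $t_{n+1}-t_n\to+\infty$.

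\emph{Step 2: a covering of $[t_n,t_{n+1}]$.} Inside each pattern we use the transition times $t_n<T_{1,n}<T_{2,n}<T_{33,n}<t_{n+1}$ from the proof of Theorem~\ref{thm:main}. Fix $\eta>0$. We pick neighborhoods $B_i$ of $E_i$ of size $\rho(\eta)$, and $A=A(\eta)$, such that $\Gamma_{32}(s)\in B_2$ for $s\ge A$ and $\Gamma_{32}(s)\in B_3$ for $s\le -A$, with the analogous property for the other two branches. Here we parametrize the branches by the times of Theorem~\ref{thm:heteroclinic-connections}, with the time origin matched to $T_{\cdot,n}$. By Theorem~\ref{thm:assymp-sol-hetero}, for $n$ large, $u$ is within $\eta$ of $\Gamma$ on each window $[T_{\cdot,n}-A,T_{\cdot,n}+A]$. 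In fact the shadowing proof yields the stronger statement $\|u(T_{1,n}+s)-\gamma^{32}(s)\|\to0$ uniformly in $|s|\le A$, and we will use this form. It remains to treat the gaps between consecutive windows, such as $[T_{1,n}+A,\,T_{2,n}-A]$.

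\emph{Step 3: the gaps, which are the main obstacle.} On each gap we must show that $u$ stays in the neighborhood $B_i$ of the corresponding corner. We plan a local trapping (Lyapunov-type) argument near each saddle $E_i$. Near $E_2$, for example, the coordinates $u_1,u_3$ are small, $u_2\approx1$, and the linearization has unstable direction $u_1$ (rate $1-k_{12}>0$), stable direction $u_3$ (rate $1-k_{32}<0$), and the strongly stable radial direction $u_2$. The solution enters $B_2$ at time $T_{1,n}+A$ with $u_3$ and $|u_2-1|$ tiny, because it follows $\gamma^{32}$. Within $B_2$, $u_3$ keeps decreasing and $u_2$ stays close to $1$ by the logistic structure, while $u_1$ grows monotonically. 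So the solution cannot leave $B_2$ except through the face where $u_1$ reaches its exit threshold $\rho$. The definition of $T_{2,n}$ in the proof of Theorem~\ref{thm:main}, as a level crossing of $u_1$, shows that this exit occurs only at $T_{2,n}-A$ up to the window. If this time matching is delicate, we simply define the windows from the entry and exit times of $B_i$ and apply the same shadowing by continuous dependence on initial data. The same argument works at $E_1$ and $E_3$. At $E_1=(1/\varepsilon,0,0)$ the radial variable is $\varepsilon u_1$, and we measure nearness in the rescaled sense; here we must check that the sup-norm neighborhood is compatible, possibly by working in the $v$ variables and transporting via $\Phi_\varepsilon$. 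At $E_3$ the gap also contains $[T_{33,n}+A,\,t_{n+1}]\cup[t_{n+1},T_{1,n+1}-A]$, which is handled identically. Hence $d(u(t),\Gamma)\le\eta$ for all large $t$.

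\emph{Step 4: identifying the omega-limit set.} Step~3 gives $\omega(u)\subset\overline\Gamma=\Gamma$, since $\Gamma$ is compact and $\omega(u)$ consists of limits of $u(t)$. Conversely, take $y=\gamma^{32}(s_0)\in\Gamma_{32}$. The uniform convergence $u(T_{1,n}+s_0)-\gamma^{32}(s_0)\to0$, with $T_{1,n}\to\infty$, gives $y\in\omega(u)$, and the other branches are treated the same way. Finally $E_i\in\omega(u)$, either by Proposition~\ref{thm:corner-visits} with $\xi\to0$ or because $\omega(u)$ is closed. Therefore $\omega(u)=\Gamma$.
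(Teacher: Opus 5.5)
Your proposal follows essentially the same route as the paper. The paper also covers each pattern by shadowing windows around $T_{1,n}$, $T_{2,n}$, $T_{33,n}$, fixing the parametrization of each branch consistently with these shifts exactly as you assume. It controls the remaining near-corner stretches by trapping arguments: Lemma~\ref{lem:close-gamma32} uses a first-crossing argument for the unstable coordinate and the quotient $u_3/u_2^\alpha$ from Lemma~\ref{lem:small-u1}. It then identifies $\omega(u)$ as in your Step~4.

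The one step to tighten is the time matching in your Step~3. A local box argument plus the level-crossing definition of $T_{2,n}$ does not rule out leaving $B_2$ through $\{u_1=\rho\}$ and re-entering before $T_{2,n}-A$. You need a global input from the phase analysis. One option is the monotonicity of $u_1$ on $[T_{1,n}^\star,T_{2,n}]$ established in the proofs of Lemmas~\ref{lem:main-21} and~\ref{prop:main-2}. The other, which is what the paper uses, is the smallness of the transverse coordinate on the whole corner-to-corner interval given by Lemma~\ref{lem:small-transverse-components}.
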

{Theorem~\ref{thm:assymp-sol-hetero} gives a local shadowing description of the orbit near each heteroclinic branch. More precisely, after shifting time around the intermediate times $T_{1,n}$, $T_{2,n}$, and $T_{33,n}$, the trajectory converges locally, in distance, to the corresponding heteroclinic branches
$\Gamma_{32}$, $\Gamma_{21}$, and $\Gamma_{13}$. Thus, on every fixed time window, each sufficiently large pattern follows the appropriate branch of the heteroclinic cycle.

Finally, Theorem~\ref{thm:omega-limit-cycle} identifies the global long-time limit set of the solution: the distance from $u(t)$ to $\Gamma$ tends to zero as $t\to+\infty$, and the omega-limit set satisfies $\omega(u)=\Gamma$. Thus the heteroclinic cycle is the geometric object governing the asymptotic dynamics of the system.
\subsection{Numerical experiments}
In this section, we present several numerical experiments to support and illustrate the theoretical results obtained above.  We fix $k_{12}=0.1$, $k_{13}=4$, $k_{23}=0.1$, $k_{31}=0.1$, $k_{32}=5$; all parameter values are given in Appendix \ref{sec:parameters}.}

\begin{figure}[H]
\centering
\includegraphics[width=0.8\linewidth]{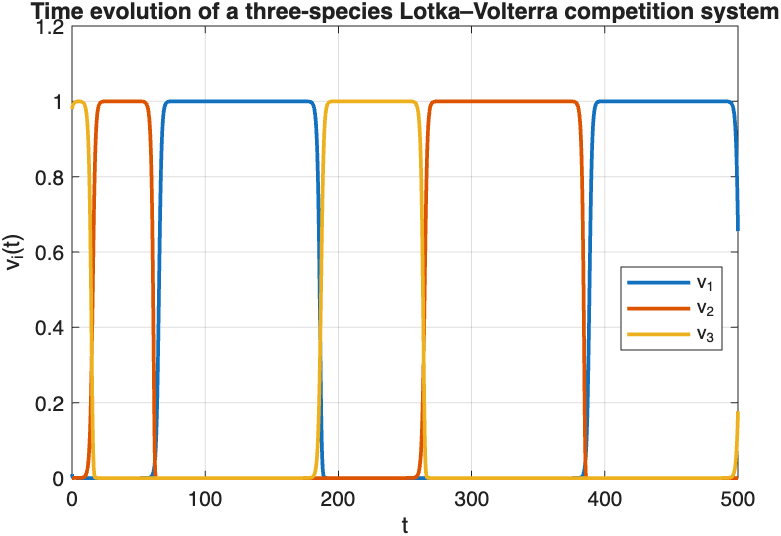}
\caption{Time evolution of the three-species Lotka--Volterra competition system over two successive patterns. In this simulation, $v_2(0)=10^{-6}$ and $v_2(T)=2.267\times 10^{-30}$. Parameters are given in Appendix \ref{sec:parameters}.}
\label{fig:two-patterns}
\end{figure}
Figure~\ref{fig:two-patterns} illustrates the dynamics of the three-species Lotka--Volterra competition system over two successive patterns. In this simulation, the second component decreases dramatically after the first full pattern: starting from $u_2(0)=10^{-6}$, we obtain $u_2(T)=2.267\times 10^{-30}$. At the same time, the solution still exhibits
the expected cyclic mechanism: during each pattern, the components become
successively dominant, and in particular species~2 recovers to an order-one
level before becoming extremely small again at the return time.

Moreover, the figure clearly shows that the second pattern takes much longer to complete than the first one. This observation is consistent with the upper and lower estimates obtained in Theorem~\ref{thm:main} for the pattern duration in terms of the initial value of $u_2$: the smaller the value of $u_2$ at the beginning of a pattern, the longer it takes for the solution to complete one full pattern. Thus, the numerical experiment supports the qualitative picture and the scaling behavior predicted by our theoretical analysis.

To illustrate the repeated-visit property stated in
Proposition~\ref{thm:corner-visits}, we plot in
Figure~\ref{fig:phase-trajectory} the phase trajectory of the solution in the
original variables. The trajectory successively approaches neighborhoods of the three boundary equilibria $\mathcal E_3=(0,0,1)$, $\mathcal E_2=(0,1,0)$, and $\mathcal E_1=(1,0,0)$, following the cyclic order
\begin{align*}
\mathcal E_3\longrightarrow \mathcal E_2
\longrightarrow \mathcal E_1\longrightarrow \mathcal E_3.
\end{align*}
This is consistent with Proposition~\ref{thm:corner-visits}, which asserts that, for each prescribed neighborhood size and for all sufficiently large patterns, the solution enters neighborhoods of these three equilibria in this order.
\begin{figure}[H]
\centering
\includegraphics[width=0.8\linewidth]{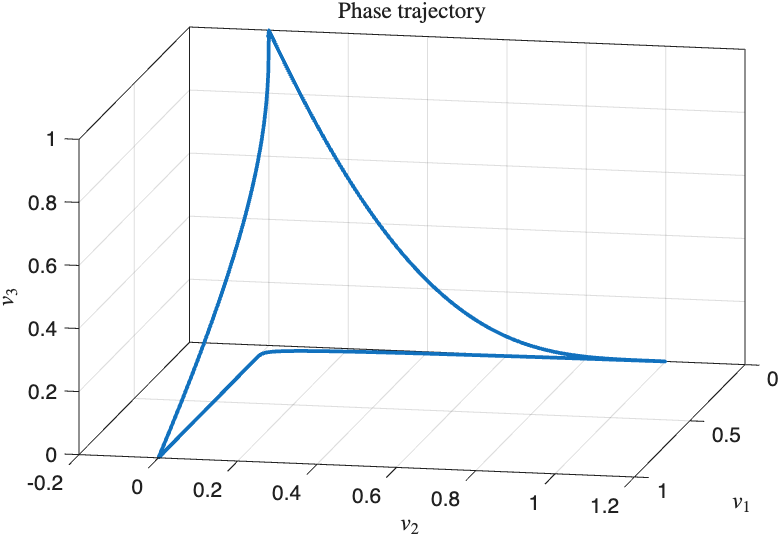}
\caption{Phase trajectory of system~\eqref{eq:main-1} in the original variables. The curve illustrates the cyclic dominance mechanism: the trajectory successively approaches neighborhoods of $(0,0,1)$, $(0,1,0)$, and $(1,0,0)$. Parameters are given in Appendix \ref{sec:parameters}.}
\label{fig:phase-trajectory}
\end{figure}

\section{Preliminaries}
First, we record some basic properties of solutions of system~\eqref{eq:system}.
We begin with positivity and boundedness, and then describe the pairwise
rock--paper--scissors structure of the original system~\eqref{eq:main-1}.

Throughout the paper, we consider solutions of \eqref{eq:system} with positive
initial data. Since each equation has the form
\begin{align*}
u_i'(t)=u_i(t)F_i(u(t)),
\end{align*}
the positive orthant is invariant. Hence, if $u_i(0)>0$ for $i=1,2,3$, then
$u_i(t)>0$ for all $t\ge0$ and all $i=1,2,3$. We shall also use the following
boundedness property.

\begin{proposition}\label{prop:dissipative}
Let $u(t)=\big(u_1(t),u_2(t),u_3(t)\big)$ be a solution of
\eqref{eq:system} with positive initial data. Then
\begin{align*}
\limsup_{t\to+\infty}u_1(t)\le\frac1\varepsilon,
\qquad
\limsup_{t\to+\infty}u_2(t)\le1,
\qquad
\limsup_{t\to+\infty}u_3(t)\le1.
\end{align*}
In particular, the solution is bounded on $\mathbb R_+$.
\end{proposition}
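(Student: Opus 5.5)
The plan is to compare each component with a scalar logistic equation, using the positivity of the orthant noted just before the statement and the fact that all interaction terms are nonnegative. I treat the three components separately, starting with the decoupled upper bounds.

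\emph{Step 1: the first component.} Since $u_2,u_3>0$ and $k_{12},k_{13}>0$, we have
\begin{align*}
u_1'(t)\le u_1(t)\bigl(1-\varepsilon u_1(t)\bigr),\qquad t\ge0.
\end{align*}
The comparison principle for scalar ODEs then gives $u_1(t)\le w(t)$, where $w$ solves the logistic equation $w'=w(1-\varepsilon w)$ with $w(0)=u_1(0)>0$. The solution $w$ is explicit:
\begin{align*}
w(t)=\frac{u_1(0)e^{t}}{1+\varepsilon u_1(0)(e^{t}-1)}.
\end{align*}
It converges to $1/\varepsilon$ as $t\to+\infty$, which yields $\limsup_{t\to+\infty}u_1(t)\le 1/\varepsilon$.

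\emph{Step 2: the other two components.} In the same way,
\begin{align*}
u_2'\le u_2(1-u_2),\qquad u_3'\le u_3(1-u_3),
\end{align*}
because $u_1,u_3\ge0$ in the first inequality and $u_1,u_2\ge0$ in the second, with all coefficients $k_{ij}$, $\varepsilon$ and $\varepsilon k_{31}$ positive. Comparison with the logistic equation $w'=w(1-w)$ gives $\limsup_{t\to+\infty}u_i(t)\le1$ for $i=2,3$.

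\emph{Step 3: boundedness on $\mathbb R_+$.} The comparison functions also give explicit global bounds:
\begin{align*}
u_1(t)\le\max\{u_1(0),1/\varepsilon\},\qquad u_i(t)\le\max\{u_i(0),1\}\quad (i=2,3),
\end{align*}
since a logistic solution is monotone toward its carrying capacity. These bounds rule out finite-time blow-up, so the local solution extends to all of $\mathbb R_+$, and they also prove the boundedness claim.

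The only point needing care is to justify comparison before global existence is known. I would handle this on the maximal interval of existence $[0,t^*)$: the bounds above hold there, so $u$ stays bounded on $[0,t^*)$, which forces $t^*=+\infty$. No step here is a real obstacle.
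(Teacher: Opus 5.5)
Your proposal is correct and follows the same approach as the paper. The paper omits the details and only says the proof is an immediate comparison of each component with a logistic equation; your argument supplies those details, including the continuation argument on the maximal interval of existence.
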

The proof is immediate by comparing each component with a logistic equation, so we omit the details.

Next, we point out the rock--paper--scissors relationship between the three
species in the original model~\eqref{eq:main-1} under
assumption~\eqref{eq:cond-sys-0}. If we restrict the dynamics to the invariant
plane $v_3\equiv0$, we obtain the two-species competitive Lotka--Volterra
system
\begin{align*}
\begin{cases}
v_1'(t)=v_1(t)\bigl(1-v_1(t)-k_{12}v_2(t)\bigr),\\
v_2'(t)=v_2(t)\bigl(1-k_{21}v_1(t)-v_2(t)\bigr).
\end{cases}
\end{align*}
Since $0<k_{12}<1<k_{21}=1/\varepsilon$, the classical theory for
two-species competitive Lotka--Volterra systems implies that, for positive
initial data,
\begin{align*}
(v_1(t),v_2(t))\longrightarrow(1,0)
\qquad\text{as }t\to+\infty;
\end{align*}
see, for instance, \cite[Chapter~3]{M02}. Thus species $1$ outcompetes
species $2$ in their pairwise interaction.

Similarly, on the invariant plane $v_1\equiv0$, the pair $(v_2,v_3)$ satisfies
a two-species competitive Lotka--Volterra system. Since $0<k_{23}<1<k_{32}$,
the same classical result shows that species $2$ outcompetes species $3$.
Finally, on the invariant plane $v_2\equiv0$, the inequalities
$0<k_{31}<1<k_{13}$ imply that species $3$ outcompetes species $1$.
Consequently, under assumption~\eqref{eq:cond-sys-0}, the full three-species
system~\eqref{eq:main-1} has the non-transitive cyclic competition structure
\begin{align*}
1\ \text{beats}\ 2,\qquad
2\ \text{beats}\ 3,\qquad
3\ \text{beats}\ 1.
\end{align*}
This is precisely the rock--paper--scissors configuration that motivates our
analysis.
\section{Proof of main theorem}
\subsection{Proof of Theorem \ref{thm:main}}
In this section we derive a sequence of quantitative estimates for the solution $(u_1,u_2,u_3)$ of \eqref{eq:system}.
Our arguments repeatedly rely on differential inequalities of the type
\begin{align*}
\rho_1 \le \frac{u_i'(t)}{u_i(t)} \le \rho_2 \quad\text{for } t\in[\tau_1,\tau_2],
\end{align*}
or logistic-type bounds
\begin{align*}
(\kappa_1-u_i(t))u_i(t)\le u_i'(t)\le (\kappa_2-u_i(t))u_i(t)\quad\text{for } t\in[\tau_1,\tau_2],
\end{align*}
with $0\le \tau_1<\tau_2$ and $0<\kappa_1<\kappa_2$.
Integrating over $[\tau_1,\tau_2]$ yields explicit two-sided estimates on $u_i(\tau_2)$ in terms of $u_i(\tau_1)$ and the length $\tau_2-\tau_1$.
In particular, knowing $u_i(\tau_1)$ and $u_i(\tau_2)$ provides bounds on $\tau_2-\tau_1$, and conversely, controlling $\tau_2-\tau_1$ yields upper/lower bounds on $u_i(\tau_2)$.

Throughout the paper, we use positivity and boundedness of solutions to obtain rough bounds, and then refine them on suitable time intervals.
We assume $\delta_1^\pm,\delta_2,\varepsilon\in(0,\tfrac14)$ and consider the initial data \eqref{eq:IC}.

%We introduce a sequence of stopping times $T_1<\cdots<T_3$ marking the successive phases of the cyclic dominance mechanism.
%We prove that each $T_i$ is finite (the finiteness proofs are collected in Appendix),
%and we show that at time $T_3$ the state satisfies $u_3(T_3)$ near $1$ and $u_2(T_3)$ is small, reproducing the initial configuration. This completes one full pattern and yields the dynamical picture stated in Theorem~\ref{thm:main}.

We now present the key lemmas, emphasizing the main mechanisms and omitting standard computations when no confusion may arise.

We decompose one pattern into three successive phases $0<T_1<T_2<T_3=T$, which we describe now. 
\begin{itemize}
\item \textbf{Phase 1: $u_3$ dominates, $u_1$ decreases, $u_2$ increases.} (from $0$ to ${T}_1$). At the beginning of Phase 1, $u_1(0)=1$, $u_2(0)$ is very small and $u_3(0)$ is close to the equilibrium value $1$. Hence the species $u_1$ loses the competition with $u_3$ and decreases sharply.  This releases the competitive pressure on $u_2$, which starts growing. The species $u_3$ stays close to the equilibrium value until $u_2$ reaches a sufficiently high value to win the competition and impact the density of $u_3$, at which point the next phase starts. 
\item \textbf{Phase 2: $u_2$ dominates, $u_3$ decreases, $u_1$ increases.} (from ${T}_1$ to ${T}_2$). At the beginning of Phase 2,  $u_1$ is extremely small, $u_2$ is reasonably high (sufficiently to drive $u_3$ to lower values) and $u_3$ is decaying. The decay of $u_3$ releases the pressure on $u_1$, which starts growing. $u_2$ remains at high density until $u_1$ reaches a sufficiently high threshold, to impact the density of $u_2$, at which point the next phase starts. 
\item \textbf{Phase 3: $u_1$ dominates, $u_2$ decreases, $u_3$ increases.} (from $T_{2}$ to $T_3$). During Phase 3, $u_1$ reaches very high values and $u_2$ decreases sharply; $u_3$ is decaying at first, but the release of the competitive pressure leads it to start growing again. When $u_3$ becomes sufficiently large, $u_1$ starts decreasing until it returns to its original value arbitrarily chosen to $u_1=1$. At the end of Phase 3, we are back to a situation close to the initial time, where $u_1=1$, $u_2$ is very small and $u_3$ is close to $1$; except $u_2$ is quantitatively much smaller than initially. 
\end{itemize}

{For a visual illustration of the three-phase decomposition $0<T_1<T_2<T_3=T$, we refer to Figure \ref{Figure3}, which displays the evolution of the three components on one full pattern and highlights the transition times $T_{1}$ and $T_{2}$ (dashed lines).}

\begin{figure}[H]
\centering
\includegraphics[width=\linewidth]{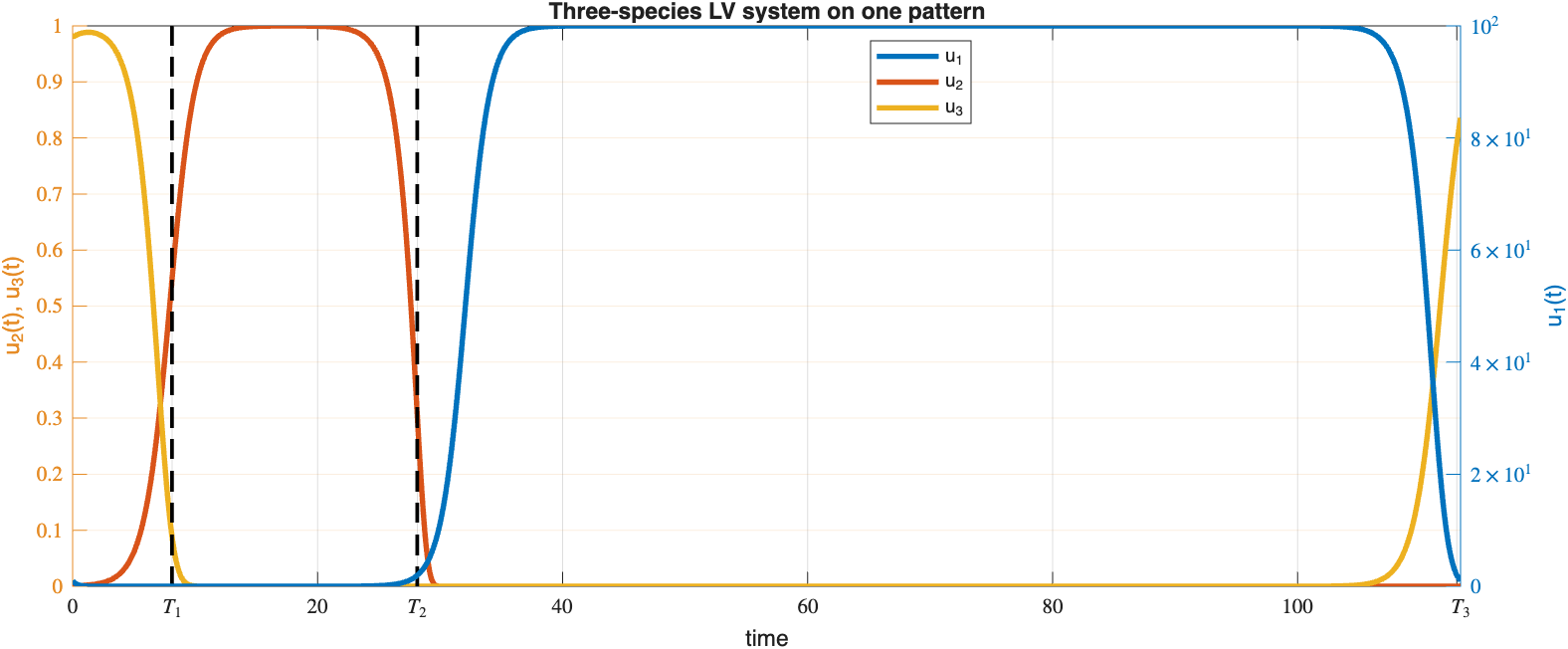}
\caption{Time evolution of the three-species Lotka--Volterra system over one pattern. The interval $[0,T]$ is divided into three successive phases by the times $0<T_1<T_2<T_3=T$. The dashed lines indicate the phase boundaries $T_1$ and $T_2$, while $T_3$ coincides with the end of the pattern ($T_3=T$). Parameters are given in Appendix \ref{sec:parameters}.}
\label{Figure3}
\end{figure}

We now introduce a sequence of lemmas and outline their proofs, focusing on the core ideas and qualitative mechanisms, and leaving aside the more standard technical computations.

%%%%%%%%%%%%%%%%%%%%%%%%%%%%%%%%%%%%%%%%%%%%%%%%%%%%%%%%%%%%%%%%%%%%%%%%%%%%%%%%%%%%%%%%%%%%%%%%%%%%%%%%%%%
\subsubsection{Phase 1: $u_3$ dominates, $u_1$ decreases, $u_2$ increases}
By our assumption, at the initial state, $u_3$ is close to its equilibrium and its strong competitor $u_2$ has a very low density; $u_1$ is decaying because it loses the competition with $u_3$. This dynamics continues until $u_2$, the strong competitor of $u_1$, reaches a sufficiently high density that impacts the dynamics of $u_3$. Thus we consider that phase 1 lasts until $u_2$ reaches the value
\begin{equation}\label{eq:nu3}
	\nu_3:=\frac{1}{2}\left(1-k_{23}+\frac{1}{k_{32}}\right). 
\end{equation}
We define $T_1$ as the ending time of phase 1; since the definition involves some intermediate times, we will define it later, in Lemma \ref{prop:main-1}.
%%%%%%%%%%%%%%%%%%%%%%%%%%%%%%%%%%%%%%%%%%%%%%%%%%%%%%%%%%%%%%%%%%%%%%%%%%%%%%%%%%%%%%%%%%%%%%%%%%%%%%%%%%%
\begin{lemma}\label{lem:main-11}
Define
\begin{align*}
\nu_1\coloneqq\frac{1-k_{23}}{2},\qquad
T_{11}\coloneqq\sup\Bigl\{t\ge 0\colon\ u_1(s)\ge \nu_1,\ \forall s\in[0,t]\Bigr\}.
\qquad
\end{align*}
Then we have  $T_{11}\lesssim 1$. Moreover, on $ [0, T_{11}]$, the following estimate holds true: 
\begin{align}\label{eq:u3-lower-T11}
\delta_1^{-}\lesssim u_2\lesssim \delta_1^{+}, \quad \frac12\left(1+\frac1{k_{13}}\right)<u_3<1.
\end{align}
\end{lemma}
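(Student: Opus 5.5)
The plan is a bootstrap (continuity) argument on $[0,T_{11}]$. Note first that $\nu_1=(1-k_{23})/2<1=u_1(0)$, so $T_{11}>0$. Let $\tau\in(0,T_{11}]$ be the largest time such that, on $[0,\tau]$,
\begin{align*}
u_2\le 2\delta_1^+ e^{C_0}\quad\text{and}\quad u_3>\tfrac12\bigl(1+\tfrac1{k_{13}}\bigr),
\end{align*}
where $C_0$ is a constant, depending only on $(k_{ij})$, to be fixed at the end. Both conditions hold at $t=0$ provided $\delta_2<\tfrac12(1-1/k_{13})$; this is the first smallness requirement on $\delta_2^0$. The goal is to show that $\tau$ cannot be strictly smaller than $T_{11}$, and to bound $T_{11}$ along the way.

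\textbf{Step 1: decay of $u_1$.} On $[0,\tau]$ we have
\begin{align*}
u_1'/u_1\le 1-k_{13}u_3<1-\tfrac12(k_{13}+1)=-\tfrac12(k_{13}-1)<0.
\end{align*}
Hence $u_1(t)\le e^{-\frac12(k_{13}-1)t}$. Since $u_1\ge\nu_1$ on $[0,T_{11}]$, this gives
\begin{align*}
\tau\le \frac{2\ln(1/\nu_1)}{k_{13}-1}=:C_1\lesssim 1.
\end{align*}

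\textbf{Step 2: two-sided bound on $u_2$.} We use $u_2'/u_2=1-u_1-u_2-k_{23}u_3$ together with $0<u_1\le 1$, $u_3<1$, and $u_2$ small. This gives
\begin{align*}
-1-k_{23}-1\le u_2'/u_2\le 1.
\end{align*}
Integrating over a time interval of length at most $C_1$ yields
\begin{align*}
\delta_1^- e^{-3C_1}\le u_2\le \delta_1^+e^{C_1}.
\end{align*}
Choosing $C_0=C_1$ closes the bootstrap for $u_2$ strictly, and this already gives $\delta_1^-\lesssim u_2\lesssim\delta_1^+$.

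\textbf{Step 3: the bound on $u_3$.} This is the main obstacle. From
\begin{align*}
u_3'/u_3=1-\varepsilon k_{31}u_1-k_{32}u_2-u_3\ge 1-\varepsilon k_{31}-k_{32}\delta_1^+e^{C_1}-u_3,
\end{align*}
we get a logistic lower bound $u_3'\ge u_3(\kappa-u_3)$ with $\kappa=1-\varepsilon k_{31}-k_{32}e^{C_1}\delta_1^+$. If $u_3(0)<\kappa$, then $u_3$ is bounded below by the logistic solution starting at $u_3(0)$, which increases toward $\kappa$. If $u_3(0)\ge\kappa$, then $u_3\ge\kappa$ for all later times. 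In either case
\begin{align*}
u_3\ge\min\{u_3(0),\kappa\}\ge \min\{1-\delta_2,\ 1-\varepsilon k_{31}-k_{32}e^{C_1}\delta_1^+\}.
\end{align*}
This exceeds $\tfrac12(1+1/k_{13})$ once $\delta_2^0$, $\varepsilon_0$ and $\delta_1^0$ are small enough; all three are explicit conditions depending only on the $k_{ij}$. So the bootstrap for $u_3$ closes strictly as well.

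\textbf{Upper bound on $u_3$.} The inequality $u_3<1$ follows because $u_3'/u_3<1-u_3$ with $u_3(0)<1$, so $u_3$ can never reach $1$.

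\textbf{Conclusion.} Both bootstrap inequalities are strict at $\tau$, so by continuity $\tau=T_{11}$. Step 1 then gives $T_{11}\le C_1\lesssim1$, and Steps 2 and 3 give the estimate \eqref{eq:u3-lower-T11}. Finiteness of $T_{11}$ follows because $u_1$ decays exponentially, so it eventually drops below $\nu_1$.
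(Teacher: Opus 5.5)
Your proof is correct and uses the same ingredients as the paper: a logistic lower bound for $u_3$ under smallness of $\varepsilon$, $\delta_1^+$ and $\delta_2$, exponential decay of $u_1$ at rate $(k_{13}-1)/2$ over a window of length $\frac{2}{k_{13}-1}\ln(1/\nu_1)$, and crude exponential bounds on $u_2$ over that window. The only difference is organizational: the paper avoids a bootstrap by working directly on the a priori window $[0,T_{11}^+]$, where the global bounds $u_1'/u_1<1$ and $u_2'/u_2<1$ already give $u_1\le e^{T_{11}^+}$ and $u_2\le\delta_1^+e^{T_{11}^+}$; your continuity argument instead lets you use the sharper bound $u_1\le 1$ in the $u_3$ equation.
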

\begin{remark}
More precisely, we have the estimate $T_{11}\leq T_{11}^{+}\coloneqq\frac{2}{k_{13}-1}\ln\left(\frac{1}{\nu_{1}}\right)$.
\end{remark}
%%%%%%%%%%%%%%%%%%%%%%%%%%%%%%%%%%%%%%%%%%%%%%%%%%%%%%%%%%%%%%%%%%%%%%%%%%%%%%%%%%%%%%%%%%%%%%%%%%%%%%%%%%%
\begin{proof}[Proof of Lemma \ref{lem:main-11}]

%\underline{\textbf{Step 1.}} $T_{11}<T_{11}^+=\frac{2}{k_{13}-1}\ln\left(\frac{1}{\nu_{1}}\right)$ and \eqref{eq:u3-lower-T11}.
	
On $[0,T_{11}^+]$, since $\frac{u_1'}{u_1}<1$ and $\frac{u_2'}{u_2}<1$, we have
	\begin{align}\label{eq:260224a}
u_1(t)<u_1(0)\,e^t\le e^{T_{11}^+},
\qquad
u_2(t)<u_2(0)\,e^t<\delta_1^{+}e^{T_{11}^+}.
\end{align}
Hence, on $[0,T_{11}^+]$
\begin{align*}
u_3'(t)>u_3(t)\Bigl(1-e^{T_{11}^+}(k_{31}\varepsilon+k_{32}\delta_1^{+})-u_3(t)\Bigr).
\end{align*}
By hypothesis, the parameters $\varepsilon \geq 0$, $\delta_{1}^{+}>0$ and $\delta_{2}>0$ are chosen small enough so that
\begin{align}\label{eq:est-smallness-T1}
1-e^{T_{11}^+}(k_{31}\varepsilon+k_{32}\delta_1^{+})>\frac12\left(1+\frac1{k_{13}}\right),
\end{align}
and
\begin{align*}
u_3(0)>1-\delta_2>\frac12\left(1+\frac1{k_{13}}\right),
\end{align*}
so comparison with the logistic ODE gives the estimate on $u_3$ in \eqref{eq:u3-lower-T11} for all  $t\in [0,T_{11}^+]$.

Using \eqref{eq:u3-lower-T11} in the first equation, we obtain on $[0,T_{11}^+]$,
\begin{align*}
\frac{u_1'}{u_1}<-\frac{k_{13}-1}{2}.
\end{align*}
Therefore $u_1(t)\le e^{-(k_{13}-1)t/2}$. Evaluating at $t=T_{11}^+$ gives
\begin{align*}
u_1(T_{11}^+)\le e^{-(k_{13}-1)T_{11}^+/2}=\nu_1.
\end{align*}
Hence the set defining $T_{11}$ is bounded above by $T_{11}^+$, so $T_{11}<T_{11}^+$.  
\medskip

%\underline{\textbf{Step 2.}} 
    For  $t\in [0,T_{11}]$, since $0<u_{1},u_2,u_3<1$, we have
\begin{align*}
\frac{u_2^{\prime}}{u_2}>-(1+k_{23}).
\end{align*}
Integrating on $[0,t]\subset[0,T_{11}]$ yields
\begin{align*}
u_2(t)\ge u_2(0)e^{-(1+k_{23})t}>\delta_1^{-}e^{-(1+k_{23})T_{11}^{+}}.
\end{align*}
	Together with the upper bound \eqref{eq:260224a} above, this proves the estimate for $u_2$ in \eqref{eq:u3-lower-T11}, and ends the proof of Lemma \ref{lem:main-11}.
\end{proof}

%%%%%%%%%%%%%%%%%%%%%%%%%%%%%%%%%%%%%%%%%%%%%%%%%%%%%%%%%%%%%%%%%%%%%%%%%%%%%%%%%%%%%%%%%%%5
%%%%%%%%%%%%%%%%%%%%%%%%%%%%%%%%%%%%%%%%%%%%%%%%%%%%%%%%%%%%%%%%%%%%%%%%%%%%%%%%%%%%%%%%%%%5

%%%%%%%%%%%%%%%%%%%%%%%%%%%%%%%%%%%%%%%%%%%%%%%%%%%%%%%%%%%%%%%%%%%%%%%%%%%%%%%%%%%%%%%%%%%%%%%%%%%%%%%%%%%

\begin{lemma}\label{lem:main-12}
Define
\begin{align*}
\nu_{2}\coloneqq\min\left\{\frac{k_{13}-1}{4k_{13}k_{32}},\frac{\nu_{1}}{2}\right\},
\qquad
T_{12}\coloneqq\sup\Bigl\{t\ge T_{11}:\ u_2(s)\le\nu_2,\ \forall s\in[T_{11},t]\Bigr\}.
\end{align*}
Then it holds that $-\ln\delta_1^+\lesssim  T_{12}-T_{11}\lesssim -\ln\delta_{1}^{-}$ and, for all $t\in[T_{11},T_{12}]$,
\begin{align}\label{eq:est-u3-T12}
u_1(t)<\nu_1\quad \text{ and } \quad  u_3(t)>\frac12\Bigl(1+\frac1{k_{13}}\Bigr).
\end{align}
Moreover, we have 
\begin{align}\label{eq:est-u1-T12}
\ln\delta_{1}^{-}\lesssim \ln u_1(T_{12})\lesssim \ln\delta_{1}^{+}.
\end{align}
\end{lemma}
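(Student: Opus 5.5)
The proof will be a bootstrap on $[T_{11},T_{12}]$: we assume a priori that $u_2\le\nu_2$ and $u_1<\nu_1$ on a maximal interval, derive the bound on $u_3$, and then show that the bound on $u_3$ in turn forces $u_1$ to keep decaying, so the a priori assumption cannot fail before $T_{12}$.

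\textbf{Step 1 (the $u_3$ and $u_1$ bounds).} Set
$$\tau:=\sup\{t\in[T_{11},T_{12}]:\ u_1(s)<\nu_1\ \text{and}\ u_3(s)>\tfrac12(1+\tfrac1{k_{13}}),\ \forall s\in(T_{11},t]\}.$$
At $T_{11}$ we have $u_1(T_{11})=\nu_1$ and $u_1'(T_{11})<0$, because Lemma~\ref{lem:main-11} gives the $u_3$ bound there and hence $u_1'/u_1<-(k_{13}-1)/2$. Lemma~\ref{lem:main-11} also gives the $u_3$ bound at $T_{11}$, so $\tau>T_{11}$.

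On $[T_{11},\tau]$ we use $u_1<\nu_1<1$, $\varepsilon$ small, and $u_2\le\nu_2\le (k_{13}-1)/(4k_{13}k_{32})$. These give
$$\frac{u_3'}{u_3}\ge 1-\varepsilon k_{31}\nu_1-\frac{k_{13}-1}{4k_{13}}-u_3.$$
For $\varepsilon$ small, the constant part exceeds $\tfrac12(1+\tfrac1{k_{13}})$ by a margin of order $(k_{13}-1)/(4k_{13})$. So $u_3$ stays strictly above $\tfrac12(1+\tfrac1{k_{13}})$ by comparison with a logistic equation. Then $u_1'/u_1\le 1-k_{13}u_3<-(k_{13}-1)/2$, so $u_1$ keeps decreasing strictly below $\nu_1$. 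Both defining conditions of $\tau$ are therefore strict at $\tau$, and continuity gives $\tau=T_{12}$. This proves \eqref{eq:est-u3-T12}, together with $u_3<1$, which follows from the logistic upper bound.

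\textbf{Step 2 (time estimate).} On $[T_{11},T_{12}]$ we bound the growth rate of $u_2$:
$$c_1:=1-\nu_1-\nu_2-k_{23}\le \frac{u_2'}{u_2}\le 1.$$
Here $c_1\ge (1-k_{23})/4>0$, using $\nu_1=(1-k_{23})/2$ and $\nu_2\le\nu_1/2$. Hence $u_2$ grows exponentially and $T_{12}<\infty$, with $u_2(T_{12})=\nu_2$. Integrating the two-sided bound between $u_2(T_{11})$ and $\nu_2$ gives
$$\ln\nu_2-\ln u_2(T_{11})\le T_{12}-T_{11}\le c_1^{-1}\bigl(\ln\nu_2-\ln u_2(T_{11})\bigr).$$
Combining this with $\delta_1^-\lesssim u_2(T_{11})\lesssim\delta_1^+$ from Lemma~\ref{lem:main-11}, and using that $\delta_1^\pm$ are small so that the additive constants are absorbed into the logarithms, yields $-\ln\delta_1^+\lesssim T_{12}-T_{11}\lesssim -\ln\delta_1^-$.

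\textbf{Step 3 (estimate \eqref{eq:est-u1-T12}).} On this interval $u_1'/u_1$ is pinned between two negative constants. Since $u_3<1$, $u_2\le\nu_2$ and $u_1\le\nu_1$,
$$-(k_{13}+k_{12}\nu_2+\varepsilon\nu_1-1)\le \frac{u_1'}{u_1}\le -\frac{k_{13}-1}{2}.$$
Integrating from $u_1(T_{11})=\nu_1$ gives $\ln u_1(T_{12})=\ln\nu_1-\theta\,(T_{12}-T_{11})$ with $\theta$ between two positive constants. Step 2 then gives $\ln u_1(T_{12})\asymp -(T_{12}-T_{11})$, which lies between multiples of $\ln\delta_1^-$ and $\ln\delta_1^+$; constants are again absorbed since $\ln\delta_1^\pm\le\ln\tfrac14$.

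The main obstacle is Step 1: the continuity argument has to be closed without circularity. In particular, the lower bound on $u_3$ needs $u_1$ small, and $u_1$ being small needs the lower bound on $u_3$, which is why both are bundled into the single definition of $\tau$. One must also check that $\varepsilon$ and $\delta_2$ can be chosen small enough, depending only on $(k_{ij})$, for the logistic margin in the $u_3$ inequality to hold.
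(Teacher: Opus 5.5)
Your proof is correct and follows the paper's argument. It uses the same mechanism: $u_2\le\nu_2$ together with $\varepsilon u_1$ small keeps $u_3$ above $\frac12(1+\frac1{k_{13}})$, which forces $u_1'/u_1<-\frac{k_{13}-1}{2}$; then $\nu_1/2\le u_2'/u_2\le 1$ gives the time estimate, and integrating $u_1'/u_1$ from $u_1(T_{11})=\nu_1$ gives the bound on $u_1(T_{12})$. The only difference is in Step~1, where you close the loop with a single joint continuity argument and a logistic comparison for $u_3$, while the paper uses two first-hitting-time contradiction arguments; yours is a slightly tidier version of the same reasoning.
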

%%%%%%%%%%%%%%%%%%%%%%%%%%%%%%%%%%%%%%%%%%%%%%%%%%%%%%%%%%%%%%%%%%%%%%%%%%%%%%%%%%%%%%%%%%%%%%%%%%%%%%%%%%%
\begin{proof}
    We divide this proof in several steps.

    \textbf{\underline{Step 1.} Proof of \eqref{eq:est-u3-T12}.}

    \noindent 
    By definition of $T_{12}$,  for all $t\in [T_{11}, T_{12}]$, the following inequalities hold true. 
\begin{equation}\label{app:eq:u2-two-upper}
u_2(t)\le \frac{k_{13}-1}{4k_{13}k_{32}}
\qquad\text{and}\qquad
u_2(t)\le \frac{\nu_1}{2}=\frac{1-k_{23}}{4}
.
\end{equation}
     Let us prove that  $u_1(t)<\nu_1$ for all $ t\in( T_{11}, T_{12}]$.
    Suppose by contradiction that there exists a time $ t\in( T_{11}, T_{12}]$ with   $u_1(t)\geq \nu_1$, and define
\begin{align*}
t_0:=\inf\{t>T_{11}\colon\ u_1(t)=\nu_1\}.
\end{align*}
    Recall \eqref{eq:u3-lower-T11} from Lemma \ref{lem:main-11}; in particular we have that $u_1'(T_{11})<0$.
    Therefore $t_0\in(T_{11},T_{12})$, $u_1(t)<\nu_1$ for all $t\in(T_{11},t_0)$, and necessarily
there exists an interval $(a,b)\subset(T_{11},t_0)$ on which $u_1'(t)>0$.
On $(a,b)$,
\begin{align*}
0<\frac{u_1'(t)}{u_1(t)} \le 1-k_{13}u_3(t),
\end{align*}
hence $u_3(t)<\frac{1}{k_{13}}$ on $(a,b)$.
On the other hand, by Lemma~\ref{lem:main-11},
    \begin{align}\label{eq:260224c}
u_3(T_{11})>\frac12\Bigl(1+\frac1{k_{13}}\Bigr)=:m>\frac1{k_{13}}.
\end{align}
Therefore $u_3$ must cross the level $m$ on $[T_{11},b]$. Define the first hitting time
\begin{align*}
\tau:=\inf\{t\in[T_{11},b]:\ u_3(t)\le m\}.
\end{align*}
Then $\tau\in(T_{11},b]$, $u_3(\tau)=m$, and $u_3^{\prime}(\tau)\le 0$.
Evaluating the third equation at $t=\tau$ gives
\begin{align*}
0\ge \frac{u_3'(\tau)}{u_3(\tau)}
=1-\varepsilon k_{31}u_1(\tau)-k_{32}u_2(\tau)-m,
\end{align*}
hence
\begin{align*}
k_{32}u_2(\tau)\ge \frac{k_{13}-1}{2k_{13}}-\varepsilon k_{31}u_1(\tau).
\end{align*}
Since $\tau<t_0$ we have $u_1(\tau)<\nu_1$.
Choosing $\varepsilon$ small enough so that
%\begin{align*}
$\varepsilon k_{31}\nu_1<\frac{k_{13}-1}{4k_{13}},$
%\end{align*}
we obtain
%\begin{align*}
$u_2(\tau)>\frac{k_{13}-1}{4k_{13}k_{32}},$
%\end{align*}
    which contradicts \eqref{app:eq:u2-two-upper}. The contradiction  proves our claim that $u_1(t)<\nu_1$ for all $ t\in( T_{11}, T_{12}]$. 
    \medskip

    Now suppose by contradiction that $u_3(t)$ crosses the level $m$ for some $t\in [T_{11}, T_{12}]$, then by reproducing the end of the previous argument, we similarly obtain a contradiction with \eqref{app:eq:u2-two-upper}. We omit the details. This ends the proof of Step 1.  
\medskip

    \noindent \textbf{\underline{Step 2.} Finiteness and estimate of $T_{12}$.} 

\noindent Since  $u_2(t)\le \nu_2\le \nu_1/2$ and $0<u_3(t)<1$, it follows that
\begin{align*}
\frac{\nu_1}{2}<\frac{u_2'(t)}{u_2(t)}<1,
\qquad t\in[T_{11},T_{12}].
\end{align*}
Integrating over $[T_{11},T_{12}]$ and using $u_2(T_{12})=\nu_2$ together with
Lemma~\ref{lem:main-11} (which gives $C_1\delta_1^-<u_2(T_{11})<C_2\delta_1^+$)
yields two-sided bounds of the form
\begin{align}\label{eq:est-T11-T12}
\ln\left(\frac{\nu_{2}}{C_{2}\,\delta_{1}^{+}}\right)
<T_{12}-T_{11}
<\frac{2}{\nu_{1}}\,
\ln\left(\frac{\nu_{2}}{C_{1}\,\delta_{1}^{-}}\right),
\end{align}
which ends Step 2.
\medskip

    \noindent \textbf{\underline{Step 3.} Proof of \eqref{eq:est-u1-T12}.}

\noindent By \eqref{eq:est-u3-T12} we have on $[T_{11},T_{12}]$
\begin{align*}
-(k_{12}+k_{13})\le \frac{u_1'(t)}{u_1(t)}\le -\frac{k_{13}-1}{2}.
\end{align*}
Integrating and using $u_1(T_{11})=\nu_1$ gives
\begin{align*}
\ln\nu_1-(k_{12}+k_{13})(T_{12}-T_{11})<\ln u_1(T_{12})<\ln\nu_1-\frac{k_{13}-1}{2}(T_{12}-T_{11}).
\end{align*}
We combine this with \eqref{eq:est-T11-T12} to obtain the following  \eqref{eq:est-u1-T12}. This  completes the proof of Lemma \ref{lem:main-12}.
\end{proof}

%%%%%%%%%%%%%%%%%%%%%%%%%%%%%%%%%%%%%%%%%%%%%%%%%%%%%%%%%%%%%%%%%%%%%%%%%%%%%%%%%%%%%%%%%%%%%%%%%%%%%%%%%%%
\begin{lemma}\label{prop:main-1}
Define
$\nu_{3}\coloneqq \frac{1}{2}\left(1-k_{23}+\dfrac{1}{k_{32}}\right),$
and
    \begin{align}\label{eq:defT1}
T_{1}\coloneqq \sup\bigl\{t\ge T_{12}\colon\ u_{2}(s)\le\nu_{3},\ \forall\,s\in[T_{12},t]\bigr\}.
\end{align}
Then $T_1$ is finite and satisfies $T_1-T_{12}\lesssim 1 $. Moreover, there exists a constant $\eta\in(0,1/2)$, depending only on the coefficients $k_{ij}$, such that
\begin{align}\label{eq:est-u1-T1}
%\alpha_{1}^{-}\ln\delta_{1}^{-}+\beta_{2}^{-}
\ln\delta_{1}^{-}
\lesssim \ln u_{1}(T_1) \lesssim
%\alpha_{1}^{+}\ln\delta_{1}^{+}+\beta_{2}^{+}.
\ln\delta_{1}^{+} \quad\  \text{and}\ \quad 
\eta\le u_3(T_{1})\le 1-\eta.
\end{align}
\end{lemma}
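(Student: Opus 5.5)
Throughout $[T_{12},T_1]$ the plan is to keep $u_1$ negligible and to control the time spent, in the same spirit as Lemmas~\ref{lem:main-11} and~\ref{lem:main-12}. At $T_{12}$ we have $u_2(T_{12})=\nu_2$, $u_1(T_{12})<\nu_1$ is exponentially small in the sense of \eqref{eq:est-u1-T12}, and $u_3(T_{12})>\frac12(1+\frac1{k_{13}})$. As long as $u_2\le \nu_3$ and $u_1\le \nu_1/2$, the second equation gives
\begin{align*}
\frac{u_2'}{u_2}\ge 1-u_1-\nu_3-k_{23}\ge 1-k_{23}-\nu_3-\frac{\nu_1}{2}.
\end{align*}
Note $1-k_{23}-\nu_3=\frac12\bigl(1-k_{23}-\frac1{k_{32}}\bigr)>0$ by \eqref{eq:cond-sys-2}, since $k_{32}(1-k_{23})>1$. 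If this margin is too small against $\nu_1/2$, I would instead use the sharper a priori smallness of $u_1$ discussed below, so that $u_1\le c$ with $c$ smaller than half the margin. Then $u_2$ grows at a fixed exponential rate $\rho>0$ from $\nu_2$ to $\nu_3$, which forces $T_1-T_{12}\le \rho^{-1}\ln(\nu_3/\nu_2)\lesssim 1$, and in particular $T_1<\infty$.

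To justify that $u_1$ stays small on this window, I would run a bootstrap. Set $\tau^*$ to be the first time after $T_{12}$ at which $u_1$ reaches some fixed small level $c$ (say $c=\delta_1^+$-independent but smaller than the rate margin). On $[T_{12},\min(\tau^*,T_1)]$ the bound $u_1'/u_1\le 1$ gives $u_1(t)\le u_1(T_{12})e^{t-T_{12}}\le u_1(T_{12})e^{C}$ with $C=\rho^{-1}\ln(\nu_3/\nu_2)$. Since $u_1(T_{12})\lesssim(\delta_1^+)^{a}$ for some $a>0$, choosing $\delta_1^0(\varepsilon)$ small makes this less than $c$, so $\tau^*>T_1$. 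The same two-sided inequality $-(\varepsilon+k_{12}+k_{13})\le u_1'/u_1\le 1$ on an interval of length $\lesssim 1$ transfers \eqref{eq:est-u1-T12} to $\ln\delta_1^-\lesssim\ln u_1(T_1)\lesssim\ln\delta_1^+$, with constants changing by $O(1)$ additive amounts. Those amounts are absorbed because $\ln\delta_1^\pm\le\ln\frac14<0$.

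For $u_3(T_1)$, the upper bound comes first. On $[T_{12},T_1]$ we have $u_2\ge\nu_2$ (at least after the first instant, since $u_2$ is increasing there), so
\begin{align*}
\frac{u_3'}{u_3}\le 1-k_{32}\nu_2-u_3.
\end{align*}
Comparison with the logistic equation gives $u_3\le\max\{u_3(T_{12}),1-k_{32}\nu_2\}$. As $u_3(T_{12})<1$, this is not yet uniform, so I would instead use that $u_3<1$ on the whole window together with $u_2(T_1)=\nu_3$. A lower bound on the time spent with $u_2\ge \nu_3/2$ comes from $u_2'/u_2\le 1$, so that time is at least $\ln 2$. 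During it $u_3'/u_3\le 1-k_{32}\nu_3/2-u_3$, and a logistic comparison over a unit time window pulls $u_3$ below $1-\eta$ for some $\eta$ depending only on the $k_{ij}$. For the lower bound, $u_3'/u_3\ge -\varepsilon k_{31}c-k_{32}\nu_3-1$ over an interval of length $\lesssim1$ starting from $u_3(T_{12})>\frac12$ gives $u_3(T_1)\ge\eta$.

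The main obstacle is the uniform separation $u_3(T_1)\le 1-\eta$. This requires showing that $u_2$ is of order one for a definite time before $T_1$, and that the growth rate $1-k_{23}-\nu_3$ is genuinely positive uniformly, which is where \eqref{eq:cond-sys-2} is used.
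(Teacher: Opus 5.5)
Your proposal is correct and follows essentially the same route as the paper. You keep $u_1$ negligible on a bounded window using $u_1'/u_1<1$ and $u_1(T_{12})\lesssim(\delta_1^+)^{a}$, which gives $u_2$ a uniformly positive growth rate and hence $T_1-T_{12}\lesssim 1$. You then transfer the $u_1$ bound across this window, and for $u_3(T_1)$ you combine a lower bound on $u_3'/u_3$ with a definite time (at least $\ln 2$, via $u_2'/u_2<1$) spent with $u_2$ of order one before $T_1$.

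The differences are cosmetic. The paper fixes the window $[T_{12},T_{12}+T_1^+]$ in advance and compares $u_2$ with a logistic equation, instead of your bootstrap with a pure exponential rate. It also uses the level $\frac14(1-k_{23}+3/k_{32})$ to obtain a strictly negative rate $-r_3$ for $u_3$, where you keep the $-u_3$ term and compare logistically starting from the level $\nu_3/2$.
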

\begin{remark}
    More precisely, we have the estimate $T_1-T_{12}\leq T_1^{+}:=\dfrac{2}{2\nu_{1}+\nu_{3}}
\ln\left(
\dfrac{\tfrac{\nu_{1}}{\nu_{2}}+\tfrac{\nu_{3}}{2\nu_{2}}-1}
{\tfrac{\nu_{1}}{\nu_{3}}-\tfrac{1}{2}}
\right)$.
\end{remark}
%%%%%%%%%%%%%%%%%%%%%%%%%%%%%%%%%%%%%%%%%%%%%%%%%%%%%%%%%%%%%%%%%%%%%%%%%%%%%%%%%%%%%%%%%%%%%%%%%%%%%%%%%%%
\begin{proof}[Proof of Lemma \ref{prop:main-1}]
From Lemma~\ref{lem:main-12}, there exist constants $\alpha_1^+,\beta_1^+>0$ such that
\begin{align*}
u_{1}(T_{12})<\beta_1^+\,(\delta_{1}^{+})^{\alpha_{1}^{+}}.
\end{align*}
Since $\frac{u_1'}{u_1}<1$, it follows that, for every
$t\in[T_{12},T_{12}+T_1^{+}]$,
\begin{align*}
u_{1}(t)
<
u_{1}(T_{12})\,e^{t-T_{12}}
<
\beta_1^+\,(\delta_{1}^{+})^{\alpha_{1}^{+}}\,e^{T_1^{+}}.
\end{align*}
Choosing $\delta_{1}^{+}$ sufficiently small gives
\begin{align*}
u_{1}(t)<\nu_{1}-\frac{\nu_{3}}{2},
\qquad
t\in[T_{12},T_{12}+T_1^{+}].
\end{align*}
Using $u_3(t)<1$ and $u_1(t)<\nu_1-\nu_3/2$ in the $u_2$-equation yields, on
$[T_{12},T_{12}+T_1^{+}]$,
\begin{align*}
\frac{u_2'}{u_2}
=
1-u_1-u_2-k_{23}u_3
>
\nu_1+\frac{\nu_3}{2}-u_2.
\end{align*}
Hence, by comparison with the solution $v(t)$ of the logistic ODE
\begin{align*}
v'=v\left(\nu_{1}+\frac{\nu_{3}}{2}-v\right),
\qquad
v(T_{12})=\nu_{2},
\end{align*}
we obtain $u_2(t)>v(t)$ for all $t\in[T_{12},T_{12}+T_1^{+}]$. By the definition
of $T_1^{+}$, the explicit solution satisfies $v(T_{12}+T_1^{+})=\nu_{3}$, hence
$u_{2}(T_{12}+T_1^{+})>\nu_{3}$. Therefore $T_1<T_{12}+T_1^{+}$ and, in particular,
$T_1<\infty$.

On $[T_{12},T_1]$ we have
\begin{align*}
-(k_{12}+k_{13})<\frac{u_1'}{u_1}<1,
\end{align*}
therefore
\begin{align*}
-(k_{12}+k_{13})(T_1-T_{12})
<
\ln\left(\frac{u_1(T_1)}{u_1(T_{12})}\right)
<
T_1-T_{12}.
\end{align*}
Since $T_1-T_{12}\le T_1^{+}$ is bounded above by a positive constant and recalling
\eqref{eq:est-u1-T12} from Lemma~\ref{lem:main-12}, we obtain the estimate on $u_1$
in \eqref{eq:est-u1-T1}.

We now estimate $u_3(T_1)$ from above. Since $\frac{u_1'}{u_1}<1$, we have, for every $t\in[T_{12},T_1]$,
\begin{align*}
u_1(t)
<
u_1(T_{12})e^{t-T_{12}}
<
\beta_1^+(\delta_1^+)^{\alpha_1^+}e^{T_1^+}.
\end{align*}
Hence, up to choosing $\delta_1^+$ sufficiently small,
\begin{align*}
u_1(t)<\frac14\left(1-k_{23}-\frac1{k_{32}}\right),
\qquad
t\in[T_{12},T_1].
\end{align*}
Using again $u_3(t)<1$ and
\begin{align*}
u_2(t)\le\nu_3
=\frac12\left(1-k_{23}+\frac1{k_{32}}\right),
\qquad
t\in[T_{12},T_1],
\end{align*}
we obtain
\begin{align*}
\frac{u_2'}{u_2}>\frac14\left(1-k_{23}-\frac1{k_{32}}\right)>0,
\end{align*}
for every $t\in[T_{12},T_1]$. Hence $u_2$ is increasing on $[T_{12},T_1]$. Since
\begin{align*}
u_2(T_{12})=\nu_2<\frac{1-k_{23}}4,
\qquad
u_2(T_1)=\nu_3=\frac12\left(1-k_{23}+\frac1{k_{32}}\right),
\end{align*}
and $u_2$ is continuous, there exists $\hat\tau_{2}\in(T_{12},T_1)$ such that
\begin{align*}
u_2(\hat\tau_{2})=\frac14\left(1-k_{23}+\frac3{k_{32}}\right).
\end{align*}
Since $u_2$ is increasing on $[T_{12},T_1]$, it follows that
\begin{align*}
u_2(t)>\frac14\left(1-k_{23}+\frac3{k_{32}}\right),
\qquad t\in[\hat\tau_{2},T_1].
\end{align*}
Therefore, for every $t\in[\hat\tau_{2},T_1]$,
\begin{align*}
\frac{u_3'}{u_3}<
-\frac14\big(\left(1-k_{23}\right)k_{32}-1\big)=:-r_3<0.
\end{align*}
Hence $u_3$ is decreasing on $[\hat\tau_{2},T_1]$, and
\begin{align*}
u_3(t)<u_3(\hat\tau_{2})\ e^{-r_3(t-\hat\tau_{2})}\le e^{-r_3(t-\hat\tau_{2})},
\qquad t\in[\hat\tau_{2},T_1],
\end{align*}
where we used $u_3(\hat\tau_{2})\le1$. Moreover, since $\frac{u_2'}{u_2}<1$, we have
\begin{align*}
u_2(T_1)<u_2(\hat\tau_{2})\ e^{T_1-\hat\tau_{2}},
\end{align*}
and therefore
\begin{align*}
e^{T_1-\hat\tau_{2}}>
\frac{u_2(T_1)}{u_2(\hat\tau_{2})}=
\frac{\frac12\left(1-k_{23}+\frac1{k_{32}}\right)}
{\frac14\left(1-k_{23}+\frac3{k_{32}}\right)}=\frac{2-2k_{23}+\frac2{k_{32}}}{1-k_{23}+\frac3{k_{32}}}.
\end{align*}
Evaluating the previous upper bound at $t=T_1$, we obtain
\begin{align*}
u_3(T_1)<e^{-r_3(T_1-\hat\tau_{2})}
<\left(\frac{1-k_{23}+\frac3{k_{32}}}{2-2k_{23}+\frac2{k_{32}}}\right)^{r_3}.
\end{align*}
To prove the lower bound for $u_3(T_1)$, we remark that
$\frac{u_3'}{u_3}>-(k_{31}+k_{32})$, and therefore
\begin{align}\label{eq:est-u3-T1}
u_3(T_1)
>
u_3(T_{12})\,\exp\bigl(-(k_{31}+k_{32})(T_1-T_{12})\bigr).
\end{align}
Combining \eqref{eq:est-u3-T12} and \eqref{eq:est-u3-T1}, we obtain
\begin{align*}
u_{3}(T_1)>
\dfrac{k_{13}+1}{2k_{13}}\,
\left(\dfrac{\tfrac{\nu_{1}}{\nu_{3}}-\tfrac{1}{2}}{\tfrac{\nu_{1}}{\nu_{2}}+\tfrac{\nu_{3}}{2\nu_{2}}-1}
\right)^{\tfrac{2(k_{31}+k_{32})}{2\nu_{1}+\nu_{3}}}
.
\end{align*}
This ends the proof of Lemma~\ref{prop:main-1}.
\end{proof}
%%%%%%%%%%%%%%%%%%%%%%%%%%%%%%%%%%%%%%%%%%%%%%%%%%%%%%%%%%%%%%%%%%%%%%%%%%%%%%%%%%%%%%%%%%%%%%%%%%%%%%%%%%%

%%%%%%%%%%%%%%%%%%%%%%%%%%%%%%%%%%%%%%%%%%%%%%%%%%%%%%%%%%%%%%%%%%%%%%%%%%%%%%%%%%%%%%%%%%%%%%%%%%%%%%%%%%%
\subsubsection{Phase 2: $u_2$ dominates, $u_3$ decreases, $u_1$ increases}
At the end of Phase 1, $u_2$ has reached a sufficiently high value, so $u_3$ has to decrease because it cannot sustain the competition. This diminishes the amount of pressure on $u_1$, that will quickly start rising again. We declare the end of Phase 2 at $T_2$, when  $u_1$ has  reached the threshold $u_1(T_2)=2$ (see the actual definition below, in Lemma \ref{prop:main-2}).
%%%%%%%%%%%%%%%%%%%%%%%%%%%%%%%%%%%%%%%%%%%%%%%%%%%%%%%%%%%%%%%%%%%%%%%%%%%%%%%%%%%%%%%%%%%%%%%%%%%%%%%%%%%
\begin{lemma}\label{lem:main-21}
Let
\begin{align*}
\nu_{4}&\coloneqq\dfrac{1}{2}\left(1-k_{23}-\frac{1}{k_{32}}\right),
\qquad T_{21}\coloneqq\sup\Bigl\{t\ge T_1\colon u_{1}(s)\le\nu_{4},\ \forall s\in[T_1,t]\Bigr\}.
\end{align*}
Then $-\ln\delta_{1}^{+}\lesssim T_{21}-T_1\lesssim - \ln\delta_{1}^{-}$, and we have 
\begin{equation}\label{eq:app-u2-ge-nu3}
u_2(t)\geq \nu_3  \text{ for all }t\in[T_1, T_{21}], 
\end{equation}
and
\begin{align}\label{eq:est-u3-T4}
\ln\delta_{1}^{-}\lesssim\ln u_{3}(T_{21})\lesssim\ln\delta_{1}^{+}.
\end{align}
\end{lemma}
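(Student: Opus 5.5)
We follow the same scheme as in Lemmas~\ref{lem:main-12} and \ref{prop:main-1}: first a bootstrap step that keeps $u_2$ large while $u_1$ stays below $\nu_4$, then exponential rate bounds on $u_1$ and $u_3$, integrated over $[T_1,T_{21}]$.

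\textbf{Step 1: proof of \eqref{eq:app-u2-ge-nu3}.}
On $[T_1,T_{21}]$ we have $u_1\le\nu_4$ and $u_3<1$. Hence
\begin{align*}
\frac{u_2'}{u_2}\ge 1-\nu_4-k_{23}-u_2 .
\end{align*}
The two thresholds are related by $1-k_{23}-\nu_4=\nu_3+\tfrac{1}{k_{32}}>\nu_3$. Therefore, whenever $u_2=\nu_3$, we get $u_2'>0$. Since $u_2(T_1)=\nu_3$, a first-hitting-time argument (as in Step~1 of Lemma~\ref{lem:main-12}) shows that $u_2$ never drops below $\nu_3$ on $[T_1,T_{21}]$.

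\textbf{Step 2: decay of $u_3$ and growth of $u_1$.}
On $[T_1,T_{21}]$ we now have $u_2\ge\nu_3$ and $u_1\le\nu_4$, so
\begin{align*}
\frac{u_3'}{u_3}\le 1-k_{32}\nu_3=-\frac{k_{32}(1-k_{23})-1}{2}=:-r<0,
\end{align*}
using \eqref{eq:cond-sys-2}. We also have the lower bound $u_3'/u_3\ge -(\varepsilon k_{31}\nu_4+k_{32}+1)\gtrsim -1$. Combined with $\eta\le u_3(T_1)\le 1-\eta$ from Lemma~\ref{prop:main-1}, this gives
\begin{align*}
\ln u_3(t)\approx -(t-T_1)\quad\text{up to constants.}
\end{align*}
For $u_1$, note that $u_1'/u_1<1$. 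After a bounded time $s_0\lesssim1$, $u_3$ falls below a fixed level $c<\frac{1-k_{12}}{k_{13}}$, because its decay rate is at least $r$ and $u_3(T_1)<1$. Using $u_2\le 1+o(1)$, which follows from the logistic comparison of Proposition~\ref{prop:dissipative} and $u_2(T_1)<1$, and $\varepsilon u_1\le\varepsilon\nu_4$, we obtain
\begin{align*}
\frac{u_1'}{u_1}\ge \frac{1-k_{12}-k_{13}c}{2}>0\quad\text{on }[T_1+s_0,T_{21}].
\end{align*}
On $[T_1,T_1+s_0]$ we only use the crude bound $u_1'/u_1\ge-(k_{12}+k_{13})$.

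\textbf{Step 3: time estimate, and the main obstacle.}
The difficulty is an explicit uniform choice of $s_0$, together with the check that $T_{21}>T_1+s_0$. The latter holds because $u_1(T_1)\lesssim(\delta_1^+)^{\alpha}$ is tiny and $u_1'/u_1<1$, so $u_1$ cannot reach $\nu_4$ within a bounded time once $\delta_1^+$ is small. Granting this, $u_1$ grows at a rate between two positive constants from $u_1(T_1+s_0)$ up to $\nu_4$. The lower bound on $u_1(T_1+s_0)$ is of order $(\delta_1^-)^{\alpha^-}$ and the upper bound of order $(\delta_1^+)^{\alpha^+}$, both from \eqref{eq:est-u1-T1}. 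Integrating $\ln(\nu_4/u_1)$ then yields $-\ln\delta_1^+\lesssim T_{21}-T_1\lesssim-\ln\delta_1^-$, and in particular $T_{21}<\infty$ with $u_1(T_{21})=\nu_4$.

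\textbf{Step 4: proof of \eqref{eq:est-u3-T4}.}
Insert these time bounds into the two-sided exponential bounds for $u_3$ from Step~2:
\begin{align*}
\ln u_3(T_{21})\le \ln(1-\eta)-r(T_{21}-T_1)\lesssim \ln\delta_1^+,
\end{align*}
and
\begin{align*}
\ln u_3(T_{21})\ge \ln\eta-C(T_{21}-T_1)\gtrsim \ln\delta_1^-.
\end{align*}
This gives \eqref{eq:est-u3-T4} and completes the plan.
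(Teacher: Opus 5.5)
Your plan is essentially the paper's proof. It uses the comparison giving $u_2\ge\nu_3$, the two-sided rate bounds $-C<u_3'/u_3<-r$, and a uniformly bounded waiting time after which $u_3$ lies below a fixed level so that $u_1$ grows at a uniform positive rate; the paper's $T_1^\star$, with level $\nu_3^\star=\frac{1-k_{12}}{2k_{13}}$, plays the role of your $T_1+s_0$. It then integrates using \eqref{eq:est-u1-T1} and $\eta\le u_3(T_1)\le 1-\eta$, exactly as you do.

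There is one arithmetic slip in Step 1, which does not affect the conclusion. Since $\nu_3+\nu_4=1-k_{23}$, one has $1-k_{23}-\nu_4=\nu_3$ exactly, not $\nu_3+\frac{1}{k_{32}}$. So the positivity of $u_2'$ at the level $\nu_3$ comes from the strict bound $u_3<1$, since there $u_2'/u_2\ge k_{23}(1-u_3)>0$. Alternatively, it follows from the logistic comparison with $v'=v(\nu_3-v)$, as in the paper.
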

%%%%%%%%%%%%%%%%%%%%%%%%%%%%%%%%%%%%%%%%%%%%%%%%%%%%%%%%%%%%%%%%%%%%%%%%%%%%%%%%%%%%%%%%%%%%%%%%%%%%%%%%%%%
\begin{remark}
    Note that $\nu_{4}>0$ follows from our assumption  $k_{32}(1-k_{23})>1$ in \eqref{eq:cond-sys-2}.
\end{remark}
%%%%%%%%%%%%%%%%%%%%%%%%%%%%%%%%%%%%%%%%%%%%%%%%%%%%%%%%%%%%%%%%%%%%%%%%%%%%%%%%%%%%%%%%%%%%%%%%%%%%%%%%%%%
\begin{proof}[Proof of Lemma \ref{lem:main-21}]
    
    %\noindent \textbf{\underline{Step 1.} } 

    \noindent We first prove \eqref{eq:app-u2-ge-nu3}. For all $t\in[T_1, T_{21}] $ we have, by definition of $T_{21}$, $u_1(t)\leq \nu_4$.  Since moreover we know that $0<u_{3}(t)<1$, we obtain
\begin{align*}
\frac{u_{2}'(t)}{u_{2}(t)}\ge (1-\nu_{4}-k_{23})-u_{2}(t)
=\nu_{3}-u_{2}(t).
\end{align*}
and \eqref{eq:app-u2-ge-nu3} follows from a simple comparison argument.
\medskip

Next we consider  \eqref{eq:est-u3-T4} and the estimate on $T_{21}-T_1$.
 Set 
 $\gamma \coloneqq k_{32}\nu_{3}-1=\frac12\bigl(k_{32}(1-k_{23})-1\bigr)>0$ (the sign follows from \eqref{eq:cond-sys-2}) 
 and
$\overline{\gamma}\coloneqq k_{31}\nu_{4}+k_{32}>0$.
Using \eqref{eq:app-u2-ge-nu3}, $u_{1}\le\nu_{4}$ and $0<u_{3}<1$, we obtain on $[T_1,T_{21}]$ that
\begin{align*}
-\overline{\gamma}
<\frac{u_{3}'}{u_{3}}<1-k_{32}\nu_{3}
=-\gamma.
\end{align*}
Integrating gives
\begin{equation}\label{eq:u3-int-short}
\ln u_{3}(T_1)-\overline{\gamma}(T_{21}-T_1)
<\ln u_{3}(T_{21})
<\ln u_{3}(T_1)-\gamma(T_{21}-T_1).
\end{equation}
Let
%\begin{align*}
$\nu_{3}^{\star}\coloneqq\frac{1-k_{12}}{2k_{13}}$
%\end{align*}
and let us show that there exists $T_1^\star\geq T_1$ with $T_1^\star-T_1\lesssim 1$ such that 
 $u_{3}(T_1^\star)\leq \nu_{3}^{\star}$. Indeed, if $u_{3}(T_1)> \nu_{3}^{\star}$
then we know from $\frac{u_{3}'}{u_{3}}<-\gamma$ that $u_{3}$ is strictly decreasing on $[T_1,T_{21}]$.
Moreover, since $\frac{u_{1}^{\prime}}{u_{1}}<1$ on $[T_1,T_{21}]$ and $u_{1}(T_{21})=\nu_{4}$, we have
\begin{align*}
T_{21}-T_1>\ln\nu_{4}-\ln u_{1}(T_1)
>\alpha_{1}^{+}\ln\left(\frac{1}{\delta_{1}^{+}}\right)+\ln\nu_{4},
\end{align*}
where $0<\alpha_1^+\lesssim 1$ is a constant given by  \eqref{eq:est-u1-T1}. Plugging this into
\begin{align*}
u_{3}(T_{21})\le u_{3}(T_1)e^{-\gamma(T_{21}-T_1)},
\end{align*}
shows that for $\delta_{1}^{+}$ small enough
 one has $u_{3}(T_{21})<\nu_{3}^{\star}$.
Hence, in any case, there exists  $T_1^{\star}\in[T_1,T_{21})$ such that
$u_{3}(T_1^{\star})\le\nu_{3}^{\star}$, and $T_1^{\star}-T_1\lesssim 1$.

Since $u_{3}$ is decreasing on $[T_1,T_{21}]$, we have $u_{3}(t)<\nu_{3}^{\star}$ for all $t\in (T_1^\star, T_{21}].$
Therefore, we obtain
\begin{align*}
1>\frac{u_{1}'}{u_{1}}>1-\varepsilon\nu_{4}-k_{12}-k_{13}\nu_{3}^{\star}
=\frac{1-k_{12}}{2}-\varepsilon\nu_{4}.
\end{align*}
Choosing $\varepsilon<\frac{1-k_{12}}{4\nu_{4}}$ we get $1>\frac{u_{1}^{\prime}}{u_{1}}>\frac{1-k_{12}}{4}>0$ on $[T_1^\star,T_{21}]$,
so $u_{1}$ is increasing and, recalling $u_{1}(T_{21})=\nu_{4}$,
\begin{align*}
    -\ln\delta_1^+\lesssim 
\ln\left(\frac{\nu_{4}}{u_{1}(T_1^\star )}\right)
<T_{21}-T_1^\star
<\frac{4}{1-k_{12}}\ln\left(\frac{\nu_{4}}{u_{1}(T_1^\star)}\right)
    \lesssim -\ln \delta_1^-.
\end{align*}
Then  \eqref{eq:u3-int-short}  yields the desired estimate for $\ln u_{3}(T_{21})$. This ends the proof of Lemma \ref{lem:main-21}.
%with constants $\alpha_{2}^{\pm}>0$, $\beta_{3}^{\pm}\in\mathbb{R}$ depending only on $(k_{ij})$.
\end{proof}
%%%%%%%%%%%%%%%%%%%%%%%%%%%%%%%%%%%%%%%%%%%%%%%%%%%%%%%%%%%%%%%%%%%%%%%%%%%%%%%%%%%%%%%%%%%%%%%%%%%%%%%%%%%
\begin{lemma}\label{prop:main-2}
Let
\begin{align*}
T_2=\sup\left\{t\ge T_{21}\colon\ u_{1}(s)\le2,\ \forall s\in[T_{21},t]\right\}.
\end{align*}
Then $T_2-T_{21}\lesssim 1$. Moreover, there exists a constant
$\eta\in(0,1/2)$, depending only on the coefficients $k_{ij}$, such that
\begin{align}\label{eq:est-u3-T5}
\eta\le u_2(T_{2})\le1-\eta \quad \text{ and }\quad \ln\delta_{1}^{-}\lesssim \ln u_{3}(T_2) \lesssim \ln\delta_{1}^{+}.
\end{align}
\end{lemma}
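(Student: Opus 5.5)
At time $T_{21}$ we have $u_1(T_{21})=\nu_4$, $u_2(T_{21})\ge\nu_3$ by \eqref{eq:app-u2-ge-nu3}, and $u_3(T_{21})$ exponentially small by \eqref{eq:est-u3-T4}. The plan is first to show that $u_1$ grows at a definite exponential rate from $\nu_4$ to $2$, then to read off the bounds on $u_2$ and $u_3$ from the fact that the elapsed time is $O(1)$.

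\textbf{Step 1: $T_2-T_{21}\lesssim 1$ and control of $u_3$.} Fix a provisional horizon $S:=\frac{8}{1-k_{12}}\ln(2/\nu_4)$, which depends only on the $k_{ij}$. Since $u_3'/u_3<1$, on $[T_{21},T_{21}+S]$ we have $u_3(t)\le u_3(T_{21})e^{S}$. By \eqref{eq:est-u3-T4} and after shrinking $\delta_1^+$, this keeps $u_3\le\nu_3^\star=\frac{1-k_{12}}{2k_{13}}$ on that interval. Then, as long as $u_1\le 2$ and $u_2<1$ (the latter is harmless by comparison, since $u_2'/u_2\le 1-u_2$ keeps $u_2<1$ once it is below $1$), we get
\begin{align*}
\frac{u_1'}{u_1}\ge 1-2\varepsilon-k_{12}-k_{13}\nu_3^\star=\frac{1-k_{12}}{2}-2\varepsilon\ge\frac{1-k_{12}}{4},
\end{align*}
for $\varepsilon$ small. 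Hence $u_1$ reaches $2$ before $T_{21}+\frac{4}{1-k_{12}}\ln(2/\nu_4)<T_{21}+S$. This gives $T_2-T_{21}\lesssim 1$, and $u_1(T_2)=2$.

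\textbf{Step 2: the estimate on $u_3(T_2)$.} On $[T_{21},T_2]$ we have the two-sided bound $-(2\varepsilon k_{31}+k_{32})<u_3'/u_3<1$. Integrating over an interval of length $O(1)$ shows that $\ln u_3(T_2)$ differs from $\ln u_3(T_{21})$ by a constant. Combined with \eqref{eq:est-u3-T4}, this yields $\ln\delta_1^-\lesssim\ln u_3(T_2)\lesssim\ln\delta_1^+$.

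\textbf{Step 3: the bounds on $u_2(T_2)$.} For the lower bound, use $u_2'/u_2\ge-(u_1+u_2+k_{23}u_3)\ge-(2+1+k_{23})$ on $[T_{21},T_2]$, together with $u_2(T_{21})\ge\nu_3$ and the $O(1)$ length of the interval. This gives $u_2(T_2)\ge\nu_3e^{-C}$. For the upper bound, I expect the main obstacle: we need $u_2(T_2)\le 1-\eta$ uniformly. The plan is to use the time just before $T_2$ when $u_1\ge 1$. Let $\tau:=\inf\{t\ge T_{21}: u_1(t)=1\}$. By the growth rate in Step 1, $T_2-\tau\ge\ln 2$, since $u_1'/u_1<1$. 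On $[\tau,T_2]$ we have $u_1\ge1$, because $u_1$ is increasing there by Step 1. Therefore $u_2'/u_2\le 1-u_1-u_2\le -u_2$. Comparing with $w'=-w^2$ starting from $w(\tau)\le 1$ gives $u_2(T_2)\le\frac{1}{1+\ln 2}$, which is of the form $1-\eta$. Finally, take $\eta$ as the minimum of the constants obtained for the upper and lower bounds, which lies in $(0,1/2)$.
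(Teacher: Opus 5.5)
Your proof is correct and follows the paper's argument. You use the same four steps: exponential control of $u_3$ on a fixed horizon to get a uniform growth rate for $u_1$ (hence $T_2-T_{21}\lesssim 1$), two-sided logarithmic bounds for $u_3$ over an $O(1)$ interval, a crude lower bound for $u_2$, and an upper bound for $u_2$ via an intermediate crossing time of $u_1$ whose distance to $T_2$ is bounded below using $u_1'/u_1<1$. The differences are only cosmetic: you control $\varepsilon u_1$ by $u_1\le 2$ (from the definition of $T_2$) rather than by $u_1\le\nu_4e^{T_2^+}$, and you use the crossing level $u_1=1$ with the Riccati comparison $u_2'\le -u_2^2$, giving $u_2(T_2)\le 1/(1+\ln 2)$, where the paper uses the level $3/2$ with exponential decay at rate $1/2$, giving $\sqrt{3/4}$.
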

%%%%%%%%%%%%%%%%%%%%%%%%%%%%%%%%%%%%%%%%%%%%%%%%%%%%%%%%%%%%%%%%%%%%%%%%%%%%%%%%%%%%%%%%%%%%%%%%%%%%%%%%%%%
\begin{proof}
Set
\begin{align*}
T_2^{+}\coloneqq \frac{4}{1-k_{12}}\ln\left(\frac{4}{\nu_{4}}\right).
\end{align*}
We assume $\varepsilon>0$ and $\delta_{1}^{+}>0$ are chosen small enough so that
\begin{equation}\label{eq:smallness-step5}
\varepsilon\,\nu_{4}e^{T_2^+}<\frac{1-k_{12}}{4},
\qquad
(\delta_{1}^{+})^{\alpha_{2}^{+}}e^{T_2^+}<\frac{1-k_{12}}{2k_{13}},
\end{equation}
where $\alpha_2^{+}$ is a constant such that
\begin{align*}
\ln(u_3(T_{21}))\le \alpha_2^+ \ln \delta_1^+
\end{align*}
in \eqref{eq:est-u3-T4} of Lemma~\ref{lem:main-21}.

For any $t\in[T_{21},T_{21}+T_2^+]$, using $\frac{u_1'}{u_1}<1$ and
$\frac{u_3'}{u_3}<1$, we have
\begin{align*}
u_{1}(t)\le u_{1}(T_{21})\,e^{t-T_{21}}
=\nu_{4}e^{t-T_{21}}
\le \nu_{4}e^{T_2^+},
\end{align*}
and
\begin{align*}
u_{3}(t)<u_{3}(T_{21})\,e^{t-T_{21}}
<(\delta_{1}^{+})^{\alpha_{2}^{+}}e^{T_2^+}.
\end{align*}
Hence \eqref{eq:smallness-step5} implies
\begin{align*}
\varepsilon u_{1}(t)<\frac{1-k_{12}}{4},
\qquad
k_{13}u_{3}(t)<\frac{1-k_{12}}{2},
\qquad t\in[T_{21},T_{21}+T_2^+].
\end{align*}
Since $0<u_2<1$, we deduce on $[T_{21},T_{21}+T_2^+]$ that
\begin{align*}
\frac{u_{1}'}{u_{1}}
>
1-\frac{1-k_{12}}{4}-k_{12}-\frac{1-k_{12}}{2}
=
\frac{1-k_{12}}{4}.
\end{align*}
Integrating on $[T_{21},T_{21}+T_2^+]$ gives
\begin{align*}
u_{1}(T_{21}+T_2^+)
>
u_{1}(T_{21})\exp\left(\frac{1-k_{12}}{4}(T_{21}+T_2^+-T_{21})\right)
=
\nu_{4}\exp\left(\ln\frac{4}{\nu_{4}}\right)
=
4>2.
\end{align*}
Therefore $T_2<T_{21}+T_2^+$, and in particular
\begin{equation}\label{eq:step5-time}
0<T_2-T_{21}\le T_2^+\lesssim 1.
\end{equation}
On $[T_{21},T_2]$, since
\begin{align*}
-(k_{31}+k_{32})<\frac{u_{3}^{\prime}}{u_{3}}<1,
\end{align*}
we obtain from \eqref{eq:step5-time} that
\begin{align*}
\ln u_{3}(T_{21})-(k_{31}+k_{32})\,T_2^+
<
\ln u_{3}(T_2)
<
\ln u_{3}(T_{21})+T_2^+.
\end{align*}
Combining this with Lemma~\ref{lem:main-21},
we obtain the estimate on $u_3$ in \eqref{eq:est-u3-T5}.

We now prove the upper bound for $u_2(T_2)$. Recall that
\begin{align*}
u_1(T_{21})=\nu_4=\frac12\left(1-k_{23}-\frac1{k_{32}}\right)<\frac12,
\qquad
u_1(T_2)=2.
\end{align*}
Since $u_1$ is continuous and increasing on $[T_{21},T_2]$, there exists
$\hat\tau_{1}\in(T_{21},T_2)$ such that
\begin{align*}
u_1(\hat\tau_{1})=\frac32.
\end{align*}
Hence
\begin{align*}
u_1(t)>\frac32,
\qquad
t\in[\hat\tau_{1},T_2].
\end{align*}
Therefore, for every $t\in[\hat\tau_{1},T_2]$,
\begin{align*}
\frac{u_2'}{u_2}<-\frac12.
\end{align*}
It follows that $u_2$ is decreasing on $[\hat\tau_{1},T_2]$, and
\begin{align*}
u_2(t)<u_2(\hat\tau_{1})\ e^{-\frac12(t-\hat\tau_{1})}\le e^{-\frac12(t-\hat\tau_{1})},
\qquad
t\in[\hat\tau_{1},T_2],
\end{align*}
where we used $u_2(\hat\tau_{1})\le1$. We next estimate $T_2-\hat\tau_{1}$. Since $\frac{u_1'}{u_1}<1$, we have
\begin{align*}
u_1(T_2)<u_1(\hat\tau_{1})\ e^{T_2-\hat{\tau}_{1}},
\end{align*}
and therefore
\begin{align*}
e^{T_2-\hat\tau}>\frac{u_1(T_2)}{u_1(\hat\tau_{1})}
=\frac43.
\end{align*}
Combining this with the previous estimate at $t=T_2$, we obtain
\begin{align*}
u_2(T_2)<e^{-\frac12(T_2-\hat\tau_{1})}<\left(\frac34\right)^{1/2}.
\end{align*}
To prove the lower bound for $u_2(T_2)$, we remark that for all
$t\in[T_{21},T_2]$ one has $u_{1}(t)\le 2$ and $0<u_{2}(t),u_{3}(t)<1$, hence
\begin{align*}
\frac{u_{2}'}{u_{2}}
=
1-u_{1}-u_{2}-k_{23}u_{3}
>
-(2+k_{23}).
\end{align*}
Integrating on $[T_{21},T_2]$, using $u_{2}(T_{21})\ge \nu_{3}$ from
\eqref{eq:app-u2-ge-nu3} and $T_2-T_{21}\le T_2^+$, we get
\begin{align*}
u_{2}(T_2)
>
u_{2}(T_{21})\exp\bigl(-(2+k_{23})(T_2-T_{21})\bigr)
\ge
\nu_{3}\exp\bigl(-(2+k_{23})T_2^+\bigr).
\end{align*}
Hence
\begin{align}\label{eq:u2-T5-lower}
u_{2}(T_{2})>\nu_{3}\exp\bigl(-(2+k_{23})T_2^+\bigr).
\end{align}
This proves \eqref{eq:est-u3-T5} and completes the proof of
Lemma~\ref{prop:main-2}.
\end{proof}
%%%%%%%%%%%%%%%%%%%%%%%%%%%%%%%%%%%%%%%%%%%%%%%%%%%%%%%%%%%%%%%%%%%%%%%%%%%%%%%%%%%%%%%%%%%%%%%%%%%%%%%%%%%
\subsubsection{Phase 3: $u_1$ dominates, $u_2$ decreases, $u_3$ increases}
At the end of Phase 2, $u_1$ has reached a sufficiently high value, so $u_2$ has started to decrease. 
In turn, the vanishing competitive pressure on $u_3$ leads to an increase of the population. Phase 3  ends at $T_3$, when $u_1$ has  returned to its original value $u_1(T_3)=1$, thus completing the pattern.

During this phase, $u_2$ has to sustain a huge amount of competition from $u_1$, which is typically comparable to a power of $\frac{1}{\varepsilon}$. Thus it reaches very low levels, magnitudes below the initial data $u_2(0)$.
%%%%%%%%%%%%%%%%%%%%%%%%%%%%%%%%%%%%%%%%%%%%%%%%%%%%%%%%%%%%%%%%%%%%%%%%%%%%%%%%%%%%%%%%%%%%%%%%%%%%%%%%%%%
\begin{lemma}\label{lem:main-31}
Let
\begin{align*}
T_{31}\coloneqq\sup\Bigl\{t\ge T_2\colon\ u_{1}(s)\le\tfrac{1-k_{12}}{4\varepsilon},\forall s\in[T_2,t]\Bigr\}.
\end{align*}
Then $-\ln\varepsilon\lesssim T_{31}-T_2\lesssim-\ln\varepsilon$, and
\begin{align}\label{eq:est-u2-T31}
(1+\tfrac{1}{\varepsilon})\ln\varepsilon\lesssim\ln u_{2}(T_{31})\lesssim\ln\varepsilon.
\end{align}
Moreover, there exists $0<\lambda_{1}^{-},\lambda_{1}^{+}\lesssim 1$ such that
\begin{align}\label{eq:est-u3-T31}
\ln\delta_{1}^{-}-\lambda_{1}^{-}\ln\varepsilon
\lesssim \ln u_{3}(T_{31})\lesssim \ln\delta_{1}^{+}-\lambda_{1}^{+}\ln\varepsilon.
\end{align}
\end{lemma}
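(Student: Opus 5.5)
Throughout $[T_2,T_{31}]$, the starting data come from Lemma~\ref{prop:main-2}: $u_1(T_2)=2$, $\eta\le u_2(T_2)\le 1-\eta$, and $\ln\delta_1^-\lesssim\ln u_3(T_2)\lesssim\ln\delta_1^+$. By the definition of $T_{31}$ we also have $\varepsilon u_1\le \frac{1-k_{12}}4$ on this interval.

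\textbf{Step 1: $u_1$ grows at a uniform exponential rate.} We show that on $[T_2,T_{31}]$, up to a time of order one, $u_2$ stays small and $u_3$ stays small. This gives
\[
\frac{1-k_{12}}4\lesssim \frac{u_1'}{u_1}<1 .
\]
For $u_2$: while $u_1\ge 3/2$ we have $u_2'/u_2\le 1-u_1<-1/2$, so $u_2$ decreases. As soon as $u_1\ge 2$, it decreases at rate at least $u_1-1$.

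For $u_3$: its growth rate is at most $1$, so $u_3$ cannot exceed $\frac{1-k_{12}}{4k_{13}}$ before a time of order $-\ln\delta_1^+$. Here I would need to check that $-\ln\varepsilon\lesssim -\ln\delta_1^+$, i.e.\ $\delta_1^0(\varepsilon)$ is chosen small compared to a power of $\varepsilon$. This is allowed, since $\delta_1^0$ depends on $\varepsilon$.

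I expect this to be the main obstacle. Concretely, I would define an auxiliary time $\tau$ at which $u_3$ first reaches $\frac{1-k_{12}}{4k_{13}}$. I would then show $\tau>T_{31}$ by a bootstrap: as long as $k_{13}u_3$ and $k_{12}u_2$ are small, $u_1$ grows at least like $e^{c(t-T_2)}$. Hence $u_1$ reaches $\frac{1-k_{12}}{4\varepsilon}$ within time $\lesssim-\ln\varepsilon$. That time is shorter than the time $u_3$ needs to reach the threshold, once $\delta_1^+\le\varepsilon^{M}$ with $M$ large.

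\textbf{Step 2: length of the interval.} Integrating $c\le u_1'/u_1\le1$ between $u_1(T_2)=2$ and $u_1(T_{31})=\frac{1-k_{12}}{4\varepsilon}$ gives
\[
-\ln\varepsilon\lesssim T_{31}-T_2\lesssim-\ln\varepsilon .
\]

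\textbf{Step 3: estimate on $u_2(T_{31})$.} For the lower bound, use $u_2'/u_2\ge -(u_1+1+k_{23})\ge -\bigl(\frac{1}{\varepsilon}+2\bigr)$. Integrating over an interval of length $\lesssim-\ln\varepsilon$, and using $u_2(T_2)\ge\eta$, gives
\[
\ln u_2(T_{31})\gtrsim \Bigl(1+\frac1\varepsilon\Bigr)\ln\varepsilon .
\]
For the upper bound, use $u_2'/u_2\le 1-u_1$ together with $u_1(t)\ge 2e^{c(t-T_2)}$. Then
\[
\ln u_2(T_{31})\le -\int_{T_2}^{T_{31}}(u_1-1)\,dt .
\]
On the final stretch where $u_1\ge 1/(8\varepsilon)$, which lasts a time of order one, this integral is at least of order $1/\varepsilon$. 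A cruder version, using only $u_1\ge 2$ so that $u_2$ decays at rate $1$ over a time $\gtrsim-\ln\varepsilon$, already gives $\ln u_2(T_{31})\lesssim\ln\varepsilon$, which is what the statement asks.

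\textbf{Step 4: estimate on $u_3(T_{31})$.} On the whole interval,
\[
-(k_{31}\varepsilon u_1+k_{32}u_2+u_3)\le \frac{u_3'}{u_3}-1\le0 ,
\]
and the lower side is at least $-(\frac{k_{31}}4+k_{32}+1)$. Hence the rate of $u_3$ lies in an interval $[-C,1]$. More precisely, once $u_2$ is small and $u_3$ is small, the rate is at least a positive constant $\frac{1}{2}$ minus $\varepsilon k_{31}u_1\le\frac{k_{31}(1-k_{12})}4<\frac12$. This yields $c_1(T_{31}-T_2)-C\le\ln\frac{u_3(T_{31})}{u_3(T_2)}\le T_{31}-T_2$.

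Combining this with Lemma~\ref{prop:main-2} and Step 2 gives~\eqref{eq:est-u3-T31}, with $\lambda_1^\pm$ coming from the growth-rate constants multiplied by the constants in the length bounds of Step 2.
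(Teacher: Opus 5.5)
Your proposal is correct and follows essentially the paper's argument. The paper likewise requires $(\delta_1^+)^{\alpha_3^+}\bigl(\tfrac{1-k_{12}}{4\varepsilon}\bigr)^{2/(1-k_{12})}$ to be small, which is your condition $\delta_1^+\le\varepsilon^M$. The only differences are cosmetic. The paper bounds $u_3$ a priori on the fixed horizon $[T_2,T_2+T_{31}^+]$ via $u_3'/u_3<1$ rather than through your first-hitting-time bootstrap. It uses $u_1\ge 2$ for the two $u_2$ bounds, and it gets the $u_3$ lower bound from an $O(1)$ waiting time after which $u_2\le\frac{1-k_{31}}{2k_{32}}$ and $u_3'/u_3>\frac{1-k_{31}}{4}$. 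One small correction: smallness of $k_{12}u_2$ is not needed for the $u_1$ growth rate, and indeed $u_2(T_2)\ge\eta$ is not small. The bound $u_2<1$ already gives $u_1'/u_1\ge\frac{1-k_{12}}{2}$ once $\varepsilon u_1$ and $k_{13}u_3$ are each at most $\frac{1-k_{12}}{4}$.
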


\begin{proof}
\noindent \textbf{\underline{Step 1.} Finiteness and estimate of $T_{31}$. } 

Set $T_{31}^{+}\coloneqq \frac{2}{1-k_{12}}\ln\Bigl(\frac{1-k_{12}}{4\varepsilon}\Bigr) $.
We assume that $\delta_{1}^{+}>0$ and $\varepsilon$ are chosen small enough so that
\begin{equation}\label{eq:smallness-step31}
\left(\delta_{1}^{+}\right)^{\alpha_{3}^{+}}\left(\frac{1-k_{12}}{4\varepsilon}\right)^{\tfrac{2}{1-k_{12}}}<\min\left\{\frac{1-k_{31}}{4},\frac{1-k_{12}}{2k_{13}}\right\},
\end{equation}
and
\begin{align*}
\dfrac{1-k_{12}}{4\varepsilon}>2.
\end{align*}
where $\alpha_{3}^{+}$ is a constant such that $\ln u_3(T_{2})\leq \alpha_{3}^+ \ln\delta_1^+$ in \eqref{eq:est-u3-T5} of Lemma~\ref{prop:main-2}.

For any $t\in[T_{2},T_{2}+T_{31}^+]$, since $\frac{u_{3}'}{u_3}<1$, we have
$u_{3}(t)\le u_{3}(T_{2})\ e^{t-T_{2}}\le \left(\delta_{1}^{+}\right)^{\alpha_{3}^{+}}e^{t-T_{2}}\le \left(\delta_{1}^{+}\right)^{\alpha_{3}^{+}}e^{T_{31}^+}$. Hence, by \eqref{eq:smallness-step31},
\begin{align*}
u_{3}(t)<\frac{1-k_{31}}{4},
\qquad
k_{13}u_{3}(t)<\frac{1-k_{12}}{2},\qquad t\in[T_{2},T_{2}+T_{31}^+].
\end{align*}
Since $0<u_2<1$, we deduce on $[T_{2},T_{2}+T_{31}^+]$ that 
$\frac{u_{1}'}{u_{1}}>1-k_{12}-\frac{1-k_{12}}{2}-\varepsilon u_{1}=\frac{1-k_{12}}{2}-\varepsilon u_{1}$. By a comparison argument, for $t\in[T_{2},T_{2}+T_{31}^+]$,
\begin{align*}
u_{1}>\dfrac{1-k_{12}}{2\varepsilon+\left(\tfrac{1-k_{12}}{2}-2\varepsilon\right)\exp\left(-(\tfrac{1-k_{12}}{2})(t-T_{2})\right)}.
\end{align*}
Evaluating at $t=T_{2}+T_{31}^{+}$ gives
\begin{align*}
u_{1}(T_{2}+T_{31}^+)>\dfrac{1-k_{12}}{4\varepsilon}.
\end{align*}
Therefore $T_{31}<T_{2}+T_{31}^+$, and in particular,
\begin{equation}\label{eq:step31-time-upper}
0<T_{31}-T_{2}\le T_{31}^+\lesssim-\ln\varepsilon.
\end{equation}
To obtain a lower bound for $T_{31}-T_{2}$, using $\tfrac{u_{1}^{\prime}}{u_{1}}<1$, $u_{1}(T_{2})=2$, and $u_{1}(T_{31})=\tfrac{1-k_{12}}{4\varepsilon}$, we get $-\ln\varepsilon\lesssim T_{31}-T_{2}$.

\noindent \textbf{\underline{Step 2.} Estimates on $u_2$ \eqref{eq:est-u2-T31} and $u_3$ \eqref{eq:est-u3-T31}.}

As a consequence, on $[T_{2},T_{31}]$ we have $\tfrac{u_{1}^{\prime}}{u_{1}}>\tfrac{1-k_{12}}{4}>0$, hence $u_{1}>u_{1}(T_{2})=2$. Therefore,
\begin{align*}
-\left(1+\tfrac{1}{\varepsilon}\right)\lesssim\dfrac{u_{2}^{\prime}}{u_{2}}\lesssim -1.
\end{align*}
Recalling that $1\lesssim u_2(T_{2})<1$, we obtain
\begin{align*}
(1+\tfrac{1}{\varepsilon})\ln\varepsilon\lesssim\ln u_{2}(T_{31})\lesssim\ln\varepsilon,
\end{align*}
which proves \eqref{eq:est-u2-T31}.

Let $\nu_{5}^{\star}\coloneqq\frac{1-k_{31}}{2k_{32}}$. We claim that there exists $T_{31}^\star\geq T_{2}$ with $T_{31}^\star-T_{2}\lesssim 1$ such that $u_{2}(T_{31}^\star)\leq\nu_{5}^{\star}$. Indeed, if $u_{2}(T_{2})>\nu_{5}^{\star}$, then
$\ln u_{2}(T_{31})\lesssim\ln\varepsilon$ implies that, for $\varepsilon$ small enough, $u_{2}(T_{31})<\nu_{5}^{\star}$. Hence, in any case, there exists $T_{31}^{\star}\in[T_{2},T_{31})$ such that
$u_{2}(T_{31}^{\star})\le\nu_{5}^{\star}$. Moreover, $T_{31}^{\star}-T_2\lesssim 1$.

On $[T_{31}^{\star},T_{31}]$, we have $0<\varepsilon u_{1}<1$, $0<u_{2}<\nu_{5}^{\star}$ and $0<u_{3}<\tfrac{1-k_{31}}{4}$. Therefore, we obtain
\begin{align*}
1>\frac{u_{3}'}{u_{3}}>1-k_{31}-k_{32}\cdot\frac{1-k_{31}}{2k_{32}}-\frac{1-k_{31}}{4}=\frac{1-k_{31}}{4}.
\end{align*}
Combining this with $\ln\delta_{1}^{-}\lesssim \ln u_{3}(T_2) \lesssim \ln\delta_{1}^{+}$ of Lemma \ref{prop:main-2}, we get
\begin{align*}
\ln\delta_{1}^{-}-\dfrac{1-k_{31}}{4}\,\ln\varepsilon\lesssim \ln u_{3}(T_{31})\lesssim \ln\delta_{1}^{+}-\ln\varepsilon.
\end{align*}
This proves \eqref{eq:est-u3-T31} and ends the proof of Lemma \ref{lem:main-31}.
\end{proof}

\begin{lemma}\label{lem:main-32}
Let
\begin{align*}
\nu_{6}\coloneqq\min\left\{\frac{1-k_{31}}{4},\frac{1-k_{12}}{2k_{13}}\right\} \text{ and }
T_{32}\coloneqq\sup\Bigl\{t\ge T_{31}\colon\ u_{3}(s)\le\nu_{6}\ \text{for all}\ s\in[T_{31},t]\Bigr\}.
\end{align*}
Then $-\ln \delta_1^{+}+\lambda_{2}\ln\varepsilon\lesssim T_{32}-T_{31}\lesssim -\ln\delta_1^{-}+\lambda_{2}\ln\varepsilon$,  we get   
\begin{align}\label{eq:est-u1-T32}
\dfrac{1}{\varepsilon}
\lesssim u_{1}(T_{32})<\dfrac{1}{\varepsilon}.
\end{align}
Moreover, there exist $\lambda_{2}^{-},\lambda_{2}^{+}>0$ such that
\begin{align}\label{eq:est-u2-T32}
(1+\tfrac{1}{\varepsilon})(\ln\delta_{1}^{-}-\lambda_{2}^{-}\ln\varepsilon)
\lesssim \ln u_{2}(T_{32})
\lesssim (1+\tfrac{1}{\varepsilon})(\ln \delta_{1}^{+}-\lambda_{2}^{+}\ln\varepsilon).
\end{align}
\end{lemma}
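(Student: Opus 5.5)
The proof follows the pattern of Lemmas~\ref{lem:main-21} and~\ref{lem:main-31}: we trap $u_1$ near its carrying capacity $1/\varepsilon$ while $u_3$ is small, and then integrate the growth rates of $u_3$ and $u_2$ over $[T_{31},T_{32}]$.

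\textbf{Step 1 (a priori bounds on $[T_{31},T_{32}]$).} By definition of $T_{32}$ we have $u_3\le\nu_6$ on this interval, so $k_{13}u_3\le\frac{1-k_{12}}{2}$. Using $u_2<1$, this gives
\[
\frac{u_1'}{u_1}\ge\frac{1-k_{12}}{2}-\varepsilon u_1 .
\]
Since $u_1(T_{31})=\frac{1-k_{12}}{4\varepsilon}$, comparison with this logistic equation shows $u_1\ge\frac{1-k_{12}}{4\varepsilon}$ on the whole interval. The upper bound $u_1<1/\varepsilon$ follows from $u_1'/u_1<1-\varepsilon u_1$ together with $u_1(T_{31})<1/\varepsilon$, so $1/\varepsilon$ is never crossed. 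Evaluating at $T_{32}$ gives \eqref{eq:est-u1-T32}.

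\textbf{Step 2 (the length $T_{32}-T_{31}$).} From Step~1 we have $u_1\ge 2$, hence $u_2'/u_2\le -1$, and \eqref{eq:est-u2-T31} already gives $u_2(T_{31})\lesssim\varepsilon$. Therefore $k_{32}u_2\le\frac{1-k_{31}}{4}$ on the interval (for $\varepsilon$ small). Combined with $\varepsilon k_{31}u_1<k_{31}$ and $u_3\le\nu_6\le\frac{1-k_{31}}{4}$, this yields
\[
\frac{1-k_{31}}{2}\le\frac{u_3'}{u_3}<1 .
\]
The main subtlety is the lower bound: the term $\varepsilon k_{31}u_1$ is of order one, and this is exactly where $k_{31}<1$ is used. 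With this two-sided bound, $u_3$ is increasing, so $T_{32}<\infty$ and $u_3(T_{32})=\nu_6$. Integrating and using \eqref{eq:est-u3-T31} gives
\[
\ln\frac{\nu_6}{u_3(T_{31})}<T_{32}-T_{31}<\frac{2}{1-k_{31}}\ln\frac{\nu_6}{u_3(T_{31})},
\]
which is of the claimed form $-\ln\delta_1^{\pm}+\lambda\ln\varepsilon$. The constants multiplying $\ln\varepsilon$ on the two sides differ, and the symbol $\lambda_2$ in the statement absorbs this through the $\lesssim$ convention.

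\textbf{Step 3 (estimate on $u_2$).} On $[T_{31},T_{32}]$, Step~1 gives $u_1\in[\frac{1-k_{12}}{4\varepsilon},\frac1\varepsilon)$, so
\[
-(1+\tfrac1\varepsilon)\lesssim\frac{u_2'}{u_2}\le 1-\frac{1-k_{12}}{4\varepsilon}\lesssim-\tfrac1\varepsilon ,
\]
where $1-\frac{1-k_{12}}{4\varepsilon}\le-\frac{1-k_{12}}{8\varepsilon}$ for $\varepsilon$ small. Integrating over the interval and multiplying the rates by the length from Step~2 gives
\[
\ln u_2(T_{32})-\ln u_2(T_{31})\approx (1+\tfrac1\varepsilon)\bigl(\ln\delta_1^{\pm}-\lambda\ln\varepsilon\bigr)
\]
up to constants. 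The initial term $\ln u_2(T_{31})$ lies in $[(1+\frac1\varepsilon)\ln\varepsilon,\ln\varepsilon]$ by \eqref{eq:est-u2-T31}, and it is absorbed into the $\ln\varepsilon$ part by adjusting $\lambda_2^\pm$. For the upper bound no absorption is needed, since $\ln u_2(T_{31})<0$ can simply be dropped. This gives \eqref{eq:est-u2-T32}.

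The main obstacle is the bookkeeping of signs with $\lesssim$ when negative logarithms are multiplied by positive rates. We need $\lambda_2^\pm>0$, and the right-hand side of \eqref{eq:est-u2-T32} must stay negative. Both hold because $\delta_1^+\le\delta_1^0(\varepsilon)$ may be chosen small enough that $\ln\delta_1^+$ dominates $-\lambda\ln\varepsilon$.
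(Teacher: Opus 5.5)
Your proof is correct and follows essentially the same route as the paper. Both arguments use comparison to get $u_1\ge\frac{1-k_{12}}{4\varepsilon}$, hence $u_2'/u_2\le -1$ and a uniform positive lower bound on $u_3'/u_3$, and then integrate the growth rates of $u_3$ and $u_2$ against the bounds of Lemma~\ref{lem:main-31}. The differences are cosmetic: you make the upper bound $u_1<1/\varepsilon$ explicit, and you use the smallness of $u_2(T_{31})$ from \eqref{eq:est-u2-T31} rather than the threshold $\nu_5^\star$, which gives the rate $\frac{1-k_{31}}{2}$ instead of $\frac{1-k_{31}}{4}$.
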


\begin{proof}
%%%%%%%%%%%%%%%%%%%%%%%%%%%%%%%%%%%%%%%%%%%%%%%%%%%%%%%%%%%%%%%%%%%%%%%%%%%%%%%%%%%%%%%%%%%%%%%%%%%%%%%%%%%
\noindent \textbf{\underline{Step 1.} Finiteness and estimate of $T_{32}$. } 

\noindent For all $t\in [T_{31},T_{32}]$, we have 
\begin{equation}\label{eq:u3-small-contrad-app}
u_{3}(t)\le \nu_{6}\le\frac{1-k_{12}}{2k_{13}}
\quad\text{and}\quad
u_{3}(t)\le \nu_{6}\le \frac{1-k_{31}}{4}.
\end{equation}
Using $0<u_{2}(t)<1$ and \eqref{eq:u3-small-contrad-app}, the first equation of \eqref{eq:system} yields
$\frac{u_{1}^{\prime}(t)}{u_{1}(t)}>\frac{1-k_{12}}{2}-\varepsilon u_{1}(t)$ for all $t\in [T_{31}, T_{32}]$. By a simple comparison argument, we deduce that 
\begin{equation}\label{eq:u1-ge-2}
u_1(t)\ge \dfrac{1-k_{12}}{4\varepsilon}>2,\qquad t\in[T_{31},T_{32}].
\end{equation}
This proves \eqref{eq:est-u1-T32}. Moreover, it implies that $\tfrac{u_{2}^{\prime}}{u_{2}}\le-1$ for all $t\in [T_{31},T_{32}]$. In particular, $u_2$ is strictly decreasing on $[T_{31},T_{32}]$. Since $u_2(T_{31})<\nu_{5}^{\star}$, it follows that $u_{2}<\nu_{5}^{\star}$ on $[T_{31},T_{32}]$. Using this, the uniform bound $\varepsilon u_{1}(t)\le 1$, and the third equation of \eqref{eq:system},
we obtain for all $t\in[T_{31},T_{32}]$,
\begin{align*}
\frac{u_{3}'}{u_{3}}>1-k_{31}-k_{32}\cdot\frac{1-k_{31}}{2k_{32}}-\frac{1-k_{31}}{4}=\frac{1-k_{31}}{4}>0,\ \text{so}\ u_3(t)\geq u_3(T_{31})\,e^{\frac{1-k_{31}}{4}(t-T_{31})}.
\end{align*}
Thus $T_{32}-T_{31}\lesssim -\ln\delta_1^{-}+\lambda_{1}^{+}\ln\varepsilon$. From $\frac{u_3'}{u_3}\leq 1$ we get $u_3(t)\leq u_3(T_{31})\,e^{t-T_{31}}$, so $-\ln \delta_1^++\lambda_{1}^{-}\ln\varepsilon\lesssim  T_{32}-T_{31}$.
\medskip

\noindent \textbf{\underline{Step 2.} Estimates on $u_2$ \eqref{eq:est-u2-T32}.} 

From \eqref{eq:est-u1-T32}, we have
\begin{align}\label{est:est-u2-T32-pre}
-\left(1+\dfrac{1}{\varepsilon}\right)\lesssim \dfrac{u_{2}^{\prime}}{u_{2}}\lesssim -\left(1+\dfrac{1}{\varepsilon}\right).
\end{align}
Combining \eqref{est:est-u2-T32-pre}, \eqref{eq:est-u2-T31} and $-\ln \delta_1^++\lambda_{1}^{-}\ln\varepsilon\lesssim T_{32}-T_{31}\lesssim -\ln\delta_1^{-}+\lambda_{1}^{+}\ln\varepsilon$ gives us \eqref{eq:est-u2-T32} and ends the proof of Lemma \ref{lem:main-32}.
\end{proof}
%%%%%%%%%%%%%%%%%%%%%%%%%%%%%%%%%%%%%%%%%%%%%%%%%%%%%%%%%%%%%%%%%%%%%%%%%%%%%%%%%%%%%%%%%%%%%%%%%%%%%%%%%%%
\begin{lemma}\label{prop:main-33}
Let
\begin{align*}
\nu_{7}\coloneqq \frac12\left(1-k_{31}+\frac1{k_{13}}\right),
\qquad
T_{33}\coloneqq\sup\left\{t\ge T_{32}\colon\ u_{3}(s)\le\nu_{7},\ \forall s\in[T_{32},t]\right\}.
\end{align*}
Then $T_{33}-T_{32}\lesssim 1$. Moreover, there exists a constant
$\eta\in(0,1/2)$, depending only on the coefficients $k_{ij}$, such that  
\begin{align}\label{eq:est-u1-T33}
\eta\le \varepsilon u_{1}(T_{33})\le 1-\eta,
\end{align}
and there exist constant $\lambda_{3}^{-},\lambda_{3}^{+}>0$ such that
\begin{align}\label{eq:est-u2-T33}
(1+\tfrac{1}{\varepsilon})(\ln\delta_{1}^{-}-\lambda_{3}^{-}\ln\varepsilon)
\lesssim \ln u_{2}(T_{33})
\lesssim (1+\tfrac{1}{\varepsilon})(\ln \delta_{1}^{+}-\lambda_{3}^{+}\ln\varepsilon).
\end{align}
\end{lemma}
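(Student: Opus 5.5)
The argument copies the structure of Lemma~\ref{prop:main-1} and Lemma~\ref{prop:main-2}: a logistic comparison gives finiteness of $T_{33}$ with an $O(1)$ bound, crude rate bounds transport the estimates on $u_2$, and a two-sided argument pins $\varepsilon u_1(T_{33})$ away from $0$ and $1$.

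\textbf{Finiteness and $T_{33}-T_{32}\lesssim 1$.} At $T_{32}$ we have $u_3(T_{32})=\nu_6$. By Lemma~\ref{lem:main-32}, $u_2(T_{32})$ is super-exponentially small; it also keeps decreasing, since $\frac{u_2'}{u_2}\le -1$ as long as $u_1\ge 2$. Next, $\varepsilon u_1<1$: this holds because $\frac{u_1'}{u_1}\le 1-\varepsilon u_1$, so $\varepsilon u_1$ cannot cross $1$. We must also keep $u_1\ge 2$ on $[T_{32},T_{33}]$. As long as $u_3\le\nu_7<1/k_{13}\cdot$(something)? Not quite, since $\nu_7>1/k_{13}$, so $u_1$ may decrease once $u_3>1/k_{13}$. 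Still, over an $O(1)$ time window $u_1$ can drop at most by a factor $e^{-(k_{12}+k_{13})T^+}$. Starting from $u_1(T_{32})\gtrsim 1/\varepsilon$, it therefore stays $\gtrsim 1/\varepsilon\ge 2$ for $\varepsilon$ small. On that window we then get
\begin{align*}
\frac{u_3'}{u_3}\ge 1-\varepsilon k_{31}u_1-k_{32}u_2-u_3> (1-k_{31})-k_{32}u_2-u_3 ,
\end{align*}
with $k_{32}u_2$ negligible. Comparing with the logistic ODE $v'=v\bigl(\tfrac{1-k_{31}}{2}+\tfrac{\nu_7}{2}\cdot\text{const}-v\bigr)$ would be the next step, but there is a problem. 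The comparison must reach $\nu_7=\tfrac12(1-k_{31}+1/k_{13})$, and this needs carrying capacity $1-k_{31}>\nu_7$, i.e.\ $k_{13}(1-k_{31})>1$, which is exactly \eqref{eq:cond-sys-2}. So I take capacity $\tfrac12(1-k_{31})+\tfrac12\nu_7\cdot 2$, strictly between $\nu_7$ and $1-k_{31}$ (minus a small margin absorbing $k_{32}u_2$). The explicit logistic solution from $\nu_6$ hits $\nu_7$ at an explicit time $T_{33}^+$ depending only on $k_{ij}$. This gives $T_{33}<T_{32}+T_{33}^+$.

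\textbf{Estimate \eqref{eq:est-u2-T33}.} On $[T_{32},T_{33}]$ we have $\varepsilon u_1\in[c,1)$ by the previous step, hence $-(1+\frac1\varepsilon)\lesssim \frac{u_2'}{u_2}\lesssim -(1+\frac1\varepsilon)$. Integrating over an interval of length at most $T_{33}^+$ and combining with \eqref{eq:est-u2-T32} gives \eqref{eq:est-u2-T33} with modified constants; the additive $O(1+1/\varepsilon)$ error is absorbed because $-\lambda\ln\varepsilon\gtrsim 1$.

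\textbf{Estimate \eqref{eq:est-u1-T33}, the main obstacle.} The lower bound follows from the previous step: $\varepsilon u_1(T_{33})\ge \varepsilon u_1(T_{32})e^{-(k_{12}+k_{13})T_{33}^+}\gtrsim 1$. The upper bound $\varepsilon u_1\le 1-\eta$ is harder, and I would mimic the $\hat\tau$ trick of Lemma~\ref{prop:main-1}. By continuity, $u_3$ passes from $\nu_6$ to $\nu_7$. Pick $\hat\tau$ with $u_3(\hat\tau)=\tfrac12(\nu_7+1/k_{13})$, a value above $1/k_{13}$. For $u_3$ monotone on the final stretch I use $\frac{u_3'}{u_3}>0$ there, justified as long as $u_3<$ capacity. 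After $\hat\tau$, $\frac{u_1'}{u_1}\le 1-k_{13}u_3<-r$ with $r:=\tfrac{k_{13}}2(\nu_7-1/k_{13})>0$. Also $\frac{u_3'}{u_3}<1$ gives $e^{T_{33}-\hat\tau}>\nu_7/u_3(\hat\tau)>1$, a fixed ratio. Hence $\varepsilon u_1(T_{33})\le \varepsilon u_1(\hat\tau)\,(u_3(\hat\tau)/\nu_7)^r<(u_3(\hat\tau)/\nu_7)^r=:1-\eta$. The delicate points are keeping all smallness requirements (on $\varepsilon$ and $\delta_1^+$) dependent only on $k_{ij}$, and showing $u_3$ really is increasing past $\hat\tau$, which the logistic lower bound provides.
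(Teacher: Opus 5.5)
Your argument is correct and is essentially the paper's proof. It uses the same logistic comparison from $\nu_6$ to $\nu_7$ with capacity $\kappa=\frac34(1-k_{31})+\frac1{4k_{13}}$, which needs $k_{13}(1-k_{31})>1$, and the same crude-rate lower bound on $\varepsilon u_1(T_{33})$. For the upper bound it uses the same intermediate level $u_3(\hat\tau)=\frac14\bigl(1-k_{31}+\frac3{k_{13}}\bigr)$ with rate $r=\frac14\bigl(k_{13}(1-k_{31})-1\bigr)$, and for $u_2$ the same integration of $u_2'/u_2$ over an $O(1)$ window.

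One slip: the capacity you wrote, $\frac12(1-k_{31})+\frac12\nu_7\cdot 2=(1-k_{31})+\frac1{2k_{13}}$, exceeds $1-k_{31}$, so the comparison would fail with it. The midpoint $\frac12(1-k_{31}+\nu_7)$ that you evidently intend equals the paper's $\kappa$ and works.
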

%%%%%%%%%%%%%%%%%%%%%%%%%%%%%%%%%%%%%%%%%%%%%%%%%%%%%%%%%%%%%%%%%%%%%%%%%%%%%%%%%%%%%%%%%%%%%%%%%%%%%%%%%%%
\begin{proof}
Recall from Lemma~\ref{lem:main-32} that
\begin{align*}
\ln u_2(T_{32})\lesssim \left(1+\frac{1}{\varepsilon}\right)
\left(\ln \delta_{1}^{+}-\lambda_{2}^{+}\ln\varepsilon\right),
\end{align*}
and let $0<\alpha_{4}^+\lesssim 1$ be such that
\begin{align*}
u_2(T_{32})
\le
\left(\frac{\delta_{1}^{+}}{\varepsilon^{\lambda_{2}^{+}}}\right)^{\alpha_{4}^{+}\left(1+\tfrac{1}{\varepsilon}\right)}.
\end{align*}
We choose $\delta_{1}^{0}>0$ sufficiently small so that
\begin{align*}
\left(\frac{\delta_{1}^{+}}{\varepsilon^{\lambda_{2}^{+}}}\right)^{\alpha_{4}^{+}\left(1+\tfrac{1}{\varepsilon}\right)}\zeta
<
\frac{1}{4k_{32}}\left(1-k_{31}-\frac{1}{k_{13}}\right),
\end{align*}
where
\begin{align*}
\zeta=
\left[
\frac{\frac12\left(1-k_{31}+\frac1{k_{13}}\right)}
{\frac14\left(1-k_{31}-\frac1{k_{13}}\right)}
\cdot
\left(
\frac{\frac34(1-k_{31})+\frac{1}{4k_{13}}}{\nu_{6}}-1
\right)
\right]^{\frac{4}{3(1-k_{31})+\frac1{k_{13}}}}
>1.
\end{align*}
Define
\begin{align}\label{eq:def-T7bar-rev}
T_{33}^{+}:=\ln\zeta.
\end{align}

For all $t\in[T_{32},T_{32}+T_{33}^{+}]$, we have
\begin{align*}
u_{2}(t)
<
u_{2}(T_{32})\,e^{T_{33}^{+}}
<
\left(\frac{\delta_{1}^{+}}{\varepsilon^{\lambda_{2}^{+}}}\right)^{\alpha_{4}^{+}\left(1+\tfrac{1}{\varepsilon}\right)}
\zeta
<
\frac{1}{4k_{32}}\left(1-k_{31}-\frac{1}{k_{13}}\right).
\end{align*}
Therefore, on $[T_{32},T_{32}+T_{33}^{+}]$,
\begin{align}\label{eq:u3prime-T33-rev}
u_{3}^{\prime}
>
u_{3}\left[\frac34(1-k_{31})+\frac{1}{4k_{13}}-u_{3}\right]
=:u_{3}(\kappa-u_{3}).
\end{align}
Hence, by comparison with the logistic ODE
\begin{align*}
v'=v(\kappa-v),\qquad v(T_{32})=\nu_{6},
\end{align*}
we obtain $u_{3}(t)>v(t)$ for all $t\in[T_{32},T_{32}+T_{33}^{+}]$.
By the definition of $T_{33}^{+}$, the explicit solution satisfies
\begin{align*}
v(T_{32}+T_{33}^{+})=\nu_{7},
\end{align*}
hence
\begin{align*}
u_{3}(T_{32}+T_{33}^{+})>\nu_{7}.
\end{align*}
It follows that
\begin{align*}
T_{33}-T_{32}<T_{33}^{+}=\ln\zeta\lesssim 1,
\end{align*}
and in particular $T_{33}<\infty$.

On the other hand, from $\frac{u_{3}^{\prime}}{u_{3}}<1$ we get
\begin{align*}
T_{33}-T_{32}>
\ln\left(\frac{u_{3}(T_{33})}{u_{3}(T_{32})}\right)
=
\ln\frac{\nu_{7}}{\nu_{6}}.
\end{align*}
Since
\begin{align*}
\nu_{7}
=
\frac12\left(1-k_{31}+\frac1{k_{13}}\right)
>
\frac{1-k_{31}}{2}
\ge
2\min\left\{\frac{1-k_{31}}{4},\frac{1-k_{12}}{2k_{13}}\right\}
=
2\nu_{6},
\end{align*}
we deduce
\begin{align}\label{eq:est-T32-T33-rev}
\ln2<T_{33}-T_{32}<\ln\zeta.
\end{align}

We next estimate $u_{1}(T_{33})$. Since
\begin{align*}
-(k_{12}+k_{13})<\frac{u_1'}{u_1}<1
\qquad\text{on }[T_{32},T_{33}],
\end{align*}
we obtain
\begin{align*}
u_1(T_{32})\,e^{-(k_{12}+k_{13})(t-T_{32})}
<
u_1(t)
<
u_1(T_{32})\,e^{t-T_{32}},
\qquad t\in[T_{32},T_{33}].
\end{align*}
Using \eqref{eq:est-u1-T32} from Lemma~\ref{lem:main-32} and
\eqref{eq:est-T32-T33-rev}, we infer that
\begin{align*}
\varepsilon u_{1}(T_{33})>\dfrac{1-k_{12}}{4}\cdot\left(\dfrac{1}{\zeta}\right)^{k_{12}+k_{13}}.
\end{align*}

To prove the upper bound, we first claim that $u_3$ is increasing on $[T_{32},T_{33}]$.
Indeed, from \eqref{eq:u3prime-T33-rev} and the fact that $u_3\le \nu_7$ on
$[T_{32},T_{33}]$, we have
\begin{align*}
\frac{u_{3}^{\prime}}{u_{3}}
>
\frac34(1-k_{31})+\frac{1}{4k_{13}}-\nu_7
=
\frac14\left(1-k_{31}-\frac1{k_{13}}\right)
>0.
\end{align*}
Hence $u_3$ is increasing on $[T_{32},T_{33}]$. Since
\begin{align*}
u_3(T_{32})=\nu_6\le \frac{1-k_{31}}{4},
\qquad
u_3(T_{33})=\nu_7=\frac12\left(1-k_{31}+\frac1{k_{13}}\right),
\end{align*}
and $u_3$ is continuous, there exists $\hat\tau_{3}\in(T_{32},T_{33})$ such that
\begin{align*}
u_{3}(\hat\tau_{3})
=
\frac14\left(1-k_{31}+\frac{3}{k_{13}}\right).
\end{align*}
Since $u_{3}$ is increasing on $[T_{32},T_{33}]$, it follows that
\begin{align*}
u_{3}(t)>
\frac14\left(1-k_{31}+\frac{3}{k_{13}}\right),
\qquad
t\in[\hat\tau_{3},T_{33}].
\end{align*}
Therefore, for every $t\in[\hat\tau_{3},T_{33}]$,
\begin{align*}
\frac{u_{1}^{\prime}}{u_{1}}<-\frac14\bigl((1-k_{31})k_{13}-1\bigr)=:-r_{1}<0.
\end{align*}
Hence $u_{1}$ is decreasing on $[\hat\tau_{3},T_{33}]$, and
\begin{align*}
u_{1}(t)<u_{1}(\hat\tau_{3})\,e^{-r_{1}(t-\hat\tau_{3})}
<\frac{1}{\varepsilon}\ e^{-r_{1}(t-\hat\tau_{3})},
\qquad
t\in[\hat\tau_{3},T_{33}],
\end{align*}
where we used $u_{1}(\hat\tau_{3})<\frac1\varepsilon$, since
$\hat\tau_3>T_{32}$ and $u_1<\frac1\varepsilon$ on $[T_{32},T_{33}]$. Moreover, since $\frac{u_{3}^{\prime}}{u_{3}}<1$, we have
\begin{align*}
u_{3}(T_{33})
<
u_{3}(\hat\tau_{3})\,e^{T_{33}-\hat\tau_{3}},
\end{align*}
and therefore
\begin{align*}
e^{T_{33}-\hat\tau_{3}}
>
\frac{u_{3}(T_{33})}{u_{3}(\hat\tau_{3})}
=
\frac{\frac12\left(1-k_{31}+\frac1{k_{13}}\right)}
{\frac14\left(1-k_{31}+\frac3{k_{13}}\right)}
=
\frac{2-2k_{31}+\frac2{k_{13}}}{1-k_{31}+\frac3{k_{13}}}.
\end{align*}
Evaluating the previous estimate for $u_1$ at $t=T_{33}$, we obtain
\begin{align*}
\varepsilon u_{1}(T_{33})<e^{-r_{1}(T_{33}-\hat\tau_{3})}
<\left(
\frac{1-k_{31}+\frac3{k_{13}}}{2-2k_{31}+\frac2{k_{13}}}
\right)^{r_1}
=:C_1^+,
\end{align*}
where $0<C_1^+<1$. This proves \eqref{eq:est-u1-T33}.

Finally, since
\begin{align}\label{eq:est-u2-T33-pre-rev}
-\left(1+\frac{1}{\varepsilon}\right)\lesssim\frac{u_{2}^{\prime}}{u_{2}}<0
\end{align}
on $[T_{32},T_{33}]$, combining \eqref{eq:est-T32-T33-rev} and
\eqref{eq:est-u2-T33-pre-rev} gives
\begin{align*}
-\left(1+\frac{1}{\varepsilon}\right)\lesssim
\ln u_{2}(T_{33})-\ln u_{2}(T_{32})<0.
\end{align*}
Together with \eqref{eq:est-u2-T32}, this yields \eqref{eq:est-u2-T33} and
completes the proof of Lemma~\ref{prop:main-33}.
\end{proof}
%%%%%%%%%%%%%%%%%%%%%%%%%%%%%%%%%%%%%%%%%%%%%%%%%%%%%%%%%%%%%%%%%%%%%%%%%%%%%%%%%%%%%%%%%%%%%%%%%%%%%%%%%%%
\begin{lemma}\label{prop:main-3}
Let
\begin{align*}
T_3
=\sup\left\{t\ge T_{33}\colon u_{1}(s)\ge1,\ \forall s\in[T_{33},t]\right\}.
\end{align*}
Then  $-\ln\varepsilon\lesssim T_3-T_{33}\lesssim-\ln\varepsilon$. Moreover, there exist constants $\lambda^{-},\lambda^{+}>0$ such that
\begin{align}\label{eq:est-u2-T3}
(1+\tfrac{1}{\varepsilon})(\ln\delta_{1}^{-}-\lambda^{-}\ln\varepsilon)
\lesssim \ln u_{2}(T_{3})
\lesssim (1+\tfrac{1}{\varepsilon})(\ln \delta_{1}^{+}-\lambda^{+}\ln\varepsilon).
\end{align}
and one has 
\begin{align}\label{eq:est-u3-T3}
\ln\big(1-u_{3}(T_3)\big)\lesssim \ln\varepsilon.
\end{align}
\end{lemma}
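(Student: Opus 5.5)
Starting from the state at $T_{33}$ given by Lemma~\ref{prop:main-33}, namely $u_3(T_{33})=\nu_7$, $\eta\le\varepsilon u_1(T_{33})\le1-\eta$ and $u_2(T_{33})$ super-exponentially small as in \eqref{eq:est-u2-T33}, the plan is to control $u_2$ and $u_3$ on a time window of length $C|\ln\varepsilon|$ and then read off $T_3$ from the exponential decay of $u_1$.

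\textbf{Step 1: a priori bounds on $u_2$ and $u_3$.} Since $u_2'/u_2<1$, on $[T_{33},T_{33}+C_0|\ln\varepsilon|]$ we have $u_2(t)\le u_2(T_{33})\varepsilon^{-C_0}$. By \eqref{eq:est-u2-T33}, this is still bounded by $(\delta_1^+\varepsilon^{-\lambda_3^+})^{\alpha(1+1/\varepsilon)}\varepsilon^{-C_0}$, which is as small as we like once $\delta_1^0(\varepsilon)$ is chosen small. Comparing $u_3$ with a logistic equation, $u_3'\ge u_3(1-\varepsilon k_{31}u_1-k_{32}u_2-u_3)$, and using $\varepsilon u_1<1$ gives $u_3\ge \min\{\nu_7,\,1-k_{31}-o(1)\}$ on the window. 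We then bootstrap: as soon as $u_1$ has decayed enough that $\varepsilon u_1\le\varepsilon^{\theta}$, the logistic comparison pushes $u_3$ up to $1-O(\varepsilon u_1+u_2)$.

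\textbf{Step 2: decay of $u_1$ and the bounds on $T_3-T_{33}$.} With $u_3\ge\nu_7$ we get $u_1'/u_1\le 1-k_{13}\nu_7<0$, since $k_{13}\nu_7=\tfrac12(k_{13}(1-k_{31})+1)>1$ by \eqref{eq:cond-sys-2}. Hence $u_1$ decreases exponentially from size $\sim 1/\varepsilon$ to $1$, which gives $T_3-T_{33}\lesssim-\ln\varepsilon$ and shows $T_3$ is finite. The reverse bound follows from $u_1'/u_1\ge-(k_{12}+k_{13}+1)$, giving $T_3-T_{33}\gtrsim\ln(\eta/\varepsilon)\gtrsim-\ln\varepsilon$. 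We also check $u_1'(T_3)<0$: at $T_3$ we have $u_3$ close to $1$, so $u_1'/u_1\approx1-k_{13}<0$. This identifies $T_3$ with the pattern time $T$.

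\textbf{Step 3: the estimate on $u_2(T_3)$.} On the window, $\varepsilon u_1\le 1$ and $u_1\ge1$, so $-(1+1/\varepsilon)\lesssim u_2'/u_2\le 1-u_1-k_{23}u_3<0$. Integrating over an interval of length $\lesssim|\ln\varepsilon|$ gives a change in $\ln u_2$ of size at most $(1+1/\varepsilon)|\ln\varepsilon|$. Adding this to \eqref{eq:est-u2-T33} yields \eqref{eq:est-u2-T3} with $\lambda^-$ possibly larger than $\lambda_3^-$, and $\lambda^+=\lambda_3^+$, since the increment is negative.

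\textbf{Step 4: the estimate on $1-u_3(T_3)$.} This is the main obstacle. We need $1-u_3(T_3)\le C\varepsilon^{c}$. I would split $[T_{33},T_3]$ at the time $s^*$ where $u_1=\varepsilon^{-1/2}$. On $[s^*,T_3]$, which has length $\gtrsim|\ln\varepsilon|$, the forcing terms satisfy $\varepsilon k_{31}u_1\le k_{31}\varepsilon^{1/2}$ and $k_{32}u_2$ is negligible, while $u_3$ is already bounded below by a constant. Linearizing the logistic equation for $w=1-u_3$ gives $w'\le -c\,w+C\varepsilon^{1/2}$, with $c>0$ coming from the lower bound on $u_3$. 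Hence $w(T_3)\le w(s^*)e^{-c(T_3-s^*)}+C\varepsilon^{1/2}\lesssim\varepsilon^{c'}$. For this we need $T_3-s^*\gtrsim|\ln\varepsilon|$, which follows from the upper rate $u_1'/u_1\ge -(k_{12}+k_{13}+1)$ applied between $u_1=\varepsilon^{-1/2}$ and $u_1=1$. The delicate point is ensuring $u_3$ does not dip below $\nu_7$ on the window. I would handle this with the same first-hitting-time contradiction used in Lemma~\ref{lem:main-12}, Step 1, using the smallness of $u_2$ and $\varepsilon u_1<1-\eta'$ as $u_1$ decreases.
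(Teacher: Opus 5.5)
Your proposal is correct and follows essentially the same route as the paper. The steps match one for one: monotone decay of $u_2$ once $u_1>1$; the logistic comparison keeping $u_3\ge\nu_7$ (so the ``delicate point'' you raise in Step 4 is already settled by your Step 1, exactly as in the paper); exponential decay of $u_1$ at rates between $\gamma^+$ and $\gamma^-$ for the two-sided bound on $T_3-T_{33}$; integration of $u_2'/u_2\gtrsim-(1+\tfrac1\varepsilon)$ for the lower bound on $u_2(T_3)$; and a split at the time where $u_1=\varepsilon^{-1/2}$ for the estimate on $1-u_3(T_3)$. The only cosmetic difference is at the end: you use the linear inequality $w'\le-\nu_7 w+C\varepsilon^{1/2}$ for $w=1-u_3$ with Gronwall, whereas the paper compares $u_3$ with the explicit logistic solution of carrying capacity $1-\nu$, $\nu\sim\varepsilon^{1/(2\gamma^-)}$; both give $1-u_3(T_3)\lesssim\varepsilon^{c}$.
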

%%%%%%%%%%%%%%%%%%%%%%%%%%%%%%%%%%%%%%%%%%%%%%%%%%%%%%%%%%%%%%%%%%%%%%%%%%%%%%%%%%%%%%%%%%%%%%%%%%%%%%%%%%%
\begin{proof}
On $[T_{33},T_{3}]$, since $u_{1}(t)>1$ and $u_{2},u_{3}>0$, we have $\tfrac{u_{2}'}{u_{2}}<0$, and thus $u_{2}$ is strictly decreasing on $[T_{33},T_{3}]$. Thus the upper bound in \eqref{eq:est-u2-T3} holds true.  

Now we know that $u_2(t)\leq \left(\tfrac{\delta_{1}^{+}}{\varepsilon^{\lambda_{3}^{+}}}\right)^{\alpha_{4}^{+}\left(1+\tfrac{1}{\varepsilon}\right)}$  for  all $t\in [T_{33},T_3]$, where $\alpha_{4}^+$ is a constant $0<\alpha_{4}^+\lesssim 1$;  select $\delta_{1}^{0}>0$ sufficiently small to ensure  
\begin{align*}
\left(\tfrac{\delta_{1}^{+}}{\varepsilon^{\lambda_{3}^{+}}}\right)^{\alpha_{4}^{+}\left(1+\tfrac{1}{\varepsilon}\right)}
<\dfrac{1}{2k_{32}}\left(1-k_{31}-\dfrac{1}{k_{13}}\right).
\end{align*}
Then, on $[T_{33},T_3]$ we have
%\begin{align*}
$u_{2}(t)<\dfrac{1}{2k_{32}}\left(1-k_{31}-\dfrac{1}{k_{13}}\right)$,
%\end{align*}
which implies
%\begin{align*}
$u_{3}^{\prime}>u_{3}\left[\frac{1}{2}\left(1-k_{31}+\frac{1}{k_{13}}\right)-u_{3}\right]$.
%\end{align*}
Using $u_{3}(T_{33})=\dfrac{1}{2}\left(1-k_{31}+\dfrac{1}{k_{13}}\right)$ and the comparison principle, we obtain
\begin{align*}
u_{3}(t)>\dfrac{1}{2}\left(1-k_{31}+\dfrac{1}{k_{13}}\right),
\qquad t\in[T_{33},T_3].
\end{align*}
Consequently,
\begin{align*}
-\gamma^{-}=:-(k_{12}+k_{13})<\frac{u_{1}'}{u_{1}}<-\frac{1}{2}\big[(1-k_{31})k_{13}-1\big]:=-\gamma^{+}.
\end{align*}
Integrating from $T_{33}$ to $T_3$ and using \eqref{eq:est-u1-T33} and $u_{1}(T_3)=1$  we obtain 
\begin{align}\label{eq:T8T7-raw}
-\ln\varepsilon\lesssim \ln\left[\big(u_{1}(T_{33})\big)^{1/\gamma^{-}}\right]
<T_3-T_{33}<\ln\left[\big(u_{1}(T_{33})\big)^{1/\gamma^{+}}\right]\lesssim-\ln\varepsilon. 
\end{align}
This provides the desired estimate on $T_{3}-T_{33}$.
\medskip

Next we estimate $u_2$ from below. Since $u_1(t)<\tfrac{1}{\varepsilon}$ we have 
\begin{align*}
-\left(1+\dfrac{1}{\varepsilon}\right)\lesssim\frac{u_{2}^{\prime}}{u_{2}},
\end{align*}
on $[T_{33},T_3]$, and thus
$-(1+\tfrac{1}{\varepsilon})\ln\varepsilon\lesssim\ln u_{2}(T_3)-\ln u_{2}(T_{33})$. From this and \eqref{eq:est-u2-T33}, we deduce
the lower bound in \eqref{eq:est-u2-T3}.
\medskip 

Finally, we estimate $u_3$. Recall that $\dfrac{1}{\varepsilon}\lesssim u_{1}(T_{33})$, then by selecting $\varepsilon_{0}$ small enough we have $u_{1}(T_{33})>\tfrac{1}{\sqrt{\varepsilon}}$. Combining this with the monotonicity of $u_{1}$ on $[T_{33},T_{3}]$, one can find $T_{33}^{\star}\in(T_{33},T_{3})$ such that $u_{1}(T_{33}^{\star})=\tfrac{1}{\sqrt{\varepsilon}}$. Then, by a similar argument, $e^{T_{3}-T_{33}^{\star}}>\left(\tfrac{1}{\varepsilon}\right)^{\tfrac{1}{2\gamma^{-}}}$. 

Recall that $u_2(T_{33}^{\star})\leq \left(\tfrac{\delta_{1}^{+}}{\varepsilon^{\lambda_{2}^{+}}}\right)^{\alpha_{4}^{+}\left(1+\tfrac{1}{\varepsilon}\right)}$ and let us set 
\begin{align*}
\nu
:=\Bigl(\tfrac{1}{\nu_{7}}-1\Bigr)^{2}\,\varepsilon^{\tfrac{1}{2\gamma^{-}}}.
\end{align*}
Then up to reducing $\delta_1^0$ and  $\varepsilon_0$ we may impose
\begin{align*}
k_{31}\varepsilon\,u_{1}(t)
<k_{31}\varepsilon\,u_{1}(T_{33}^{\star})<\frac{\nu}{2},\quad\ \text{and}\ \quad 
k_{32}\,u_{2}(t)
<k_{32}\,u_{2}(T_{33}^{\star})<\frac{\nu}{2},
\end{align*}
for all $t\in[T_{33}^{\star},T_3]$.
Hence,
%\begin{align*}
$u_{3}'(t) >u_{3}(t)\Bigl(\big(1-\nu\big)-u_{3}(t)\Bigr)$,
%\end{align*}
and by comparing with the logistic ODE we obtain, 
\begin{align*}
u_{3}(t)>
\frac{1-\nu}{1+\Bigl(\frac{1-\nu}{\nu_{7}}-1\Bigr)e^{-(1-\nu)(t-T_{33}^{\star})}}.
\end{align*}
Since $0<\nu<\frac12$, we have $1-\nu>\frac12$, and
$\frac{1-\nu}{\nu_{7}}-1<\frac{1}{\nu_{7}}-1\eqqcolon\eta$.
Therefore,
\begin{align*}
u_{3}(t)
&>\frac{1-\nu}{1+\eta\,e^{-(1-\nu)(t-T_{33}^{\star})}}
>\frac{1-\nu}{1+\eta\,e^{-(t-T_{33}^{\star})/2}},
\qquad t\in[T_{33}^{\star},T_3].
\end{align*}
Evaluating at $t=T_3$, we obtain for small $\delta_1^+\ll 1$,
\begin{align*}
u_{3}(T_3)
&>\frac{1-\nu}{1+\eta\,e^{-(T_3-T_{33}^{\star})/2}}
>\frac{1-\nu}{1+\mathcal{O}\left(\varepsilon^{\tfrac{1}{4\gamma^{-}}}\right)}.
\end{align*}
Since $\nu=\mathcal{O}\left(\varepsilon^{\tfrac{1}{2\gamma^{-}}}\right)$,
we obtain
\begin{align*}
u_{3}(T_3)>1-\mathcal{O}\left(\varepsilon^{\tfrac{1}{4\gamma^{-}}}\right),
\end{align*}
which is exactly \eqref{eq:est-u3-T3}. This completes the proof of Lemma \ref{prop:main-3}.
\end{proof}

\subsection{Proof of Theorem \ref{thm:main}}
\begin{proof}[Proof of Theorem \ref{thm:main}]
The main results have already been established by Lemma \ref{lem:main-11}--\ref{prop:main-3}. The finiteness of $T$ and the first estimate in \eqref{eq:est-That-T} are a consequence of these Lemmas. Indeed from Lemma \ref{lem:main-11}, Lemma \ref{lem:main-12} and Lemma \ref{prop:main-1}, we get the estimate $-\ln \delta_1^{+}\lesssim T_1\lesssim -\ln\delta_1^{-}$; similarly, Lemma \ref{lem:main-21} and \ref{prop:main-2} give $-\ln \delta_1^{+}\lesssim T_2-T_1\lesssim -\ln\delta_1^{-}$ and Lemma \ref{lem:main-31}, \ref{lem:main-32}, \ref{prop:main-33} and \ref{prop:main-3} give $-\ln\delta_{1}^{+}+\kappa\ln\varepsilon\lesssim T_3-T_2\lesssim -\ln\delta_1^{-}-\ln\varepsilon$ for $\kappa>0$; from which we deduce
\begin{equation*}
-\ln\delta_{1}^{+}+\kappa\ln\varepsilon\lesssim T=T_3\lesssim -\ln\delta_1^{-}-\ln\varepsilon.
\end{equation*}
The estimate \eqref{eq:est-u-main} comes from Lemma \ref{prop:main-3}. Finally, the estimate \eqref{eq:est-u2} comes from Lemma \ref{prop:main-1} (more precisely, that $u_{2}(T_1)=\nu_3$, with  $\widehat{T}:=T_1$); the second part of \eqref{eq:est-That-T} follows from \eqref{eq:est-u1-T33} in Lemma \ref{prop:main-33}. This ends the proof of Theorem \ref{thm:main}.
\end{proof}
\subsection{Proof of Proposition \ref{thm:corner-visits}}
\begin{lemma}\label{lem:small-u1}
Suppose that
\begin{align*}
0<c_0\le u_2(0)<1,\qquad u_3(0)\in(0,1).
\end{align*}
Let $\delta\in(0,1)$. Then there exist constants
\begin{align*}
\nu_\delta>0,
\qquad
\hat T_\delta>0,
\end{align*}
depending only on $\delta$, $c_0$, and the coefficients $(k_{ij})$, such
that the following holds: for every $T_\delta>\hat T_\delta$, if
\begin{align*}
0<u_1(t)\le \nu_\delta
\qquad\text{for all }t\in[0,T_\delta],
\end{align*}
then
\begin{align*}
1-\delta\le u_2(t)<1,
\qquad
0<u_3(t)\le \delta
\qquad\text{for all }t\in[\hat T_\delta,T_\delta].
\end{align*}
In particular,
\begin{align*}
\|u(t)-(0,1,0)\|_\infty\le \delta
\qquad\text{for all }t\in[\hat T_\delta,T_\delta].
\end{align*}
\end{lemma}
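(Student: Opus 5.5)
The plan is to treat $u_1$ as a small perturbation on $[0,T_\delta]$ and run a two-step comparison argument: first push $u_2$ up to a level where it dominates $u_3$, then let $u_3$ decay exponentially, and finally bring $u_2$ close to $1$. Throughout I take $\nu_\delta$ small (to be fixed at the end) and assume, as in the setting of the lemma, that $0<u_2,u_3<1$ on $[0,T_\delta]$.

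\textbf{Step 1: a uniform lower bound on $u_2$.} On $[0,T_\delta]$ we have $u_1\le\nu_\delta$ and $u_3<1$, so
\begin{align*}
\frac{u_2'}{u_2}\ge 1-\nu_\delta-k_{23}-u_2 .
\end{align*}
Choose $\nu_\delta\le \nu_4=\frac12\bigl(1-k_{23}-\frac1{k_{32}}\bigr)$, so that $1-\nu_\delta-k_{23}\ge\nu_3$. Comparison with the logistic equation $v'=v(\nu_3-v)$, $v(0)=c_0$, then gives $u_2(t)\ge \min\{c_0,\nu_3\}e^{-\ldots}$-type control. More precisely, the explicit logistic solution reaches $\nu_3-\theta$, with $\theta:=\frac12\bigl(\nu_3-\frac1{k_{32}}\bigr)>0$, after a time $t_1$ depending only on $c_0$ and $(k_{ij})$. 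After that time $u_2\ge \nu_3-\theta$, and this level satisfies $k_{32}(\nu_3-\theta)>1$ by \eqref{eq:cond-sys-2}.

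\textbf{Step 2: decay of $u_3$.} For $t\ge t_1$,
\begin{align*}
\frac{u_3'}{u_3}\le 1-k_{32}(\nu_3-\theta)=:-r<0 .
\end{align*}
Hence $u_3(t)\le e^{-r(t-t_1)}$, and $u_3\le\delta':=\min\{\delta, \delta/(2k_{23})\}$ after a further time $t_2=t_1+r^{-1}\ln(1/\delta')$. Crucially, $r$ does not depend on $\nu_\delta$ once $\nu_\delta\le\nu_4$. Since the bound is monotone in time, $u_3\le\delta'$ persists on the whole remaining interval up to $T_\delta$.

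\textbf{Step 3: $u_2$ close to $1$.} For $t\ge t_2$ we have
\begin{align*}
\frac{u_2'}{u_2}\ge 1-\nu_\delta-k_{23}\delta'-u_2\ge 1-\tfrac{3\delta}{4}-u_2 ,
\end{align*}
provided $\nu_\delta\le\delta/4$. So choose $\nu_\delta:=\min\{\nu_4,\delta/4\}$. Comparison with the logistic equation with carrying capacity $1-\frac{3\delta}{4}$, started from $u_2(t_2)\ge\nu_3-\theta$, shows that $u_2\ge 1-\delta$ after an additional time $t_3$ that depends only on $\delta$ and $(k_{ij})$. Setting $\hat T_\delta:=t_2+t_3$ yields both conclusions on $[\hat T_\delta,T_\delta]$. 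The upper bound $u_2<1$ follows because $\frac{u_2'}{u_2}<1-u_2$ and $u_2(0)<1$, which keeps $u_2$ below $1$ by comparison.

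\textbf{Main obstacle.} The delicate point is making all the times uniform, so that they depend only on $c_0$, $\delta$, and $(k_{ij})$, and not on $\nu_\delta$ or on the initial value $u_3(0)$. This is handled by fixing the rate $r$ independently of $\delta$ and by using the crude bound $u_3(t_1)\le1$. The one hypothesis needed for Step 1 to give a margin above $1/k_{32}$ is $k_{32}(1-k_{23})>1$, which is exactly what \eqref{eq:cond-sys-2} provides.
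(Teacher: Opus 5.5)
Your proof is correct, but it takes a different route from the paper. The paper never lower-bounds $u_2$ first. It works with the ratio $\lambda=u_3/u_2^{\alpha}$, where $1<\alpha<\min\{k_{32},1/k_{23}\}$. The logarithmic derivative $(1-\alpha)+u_1(\alpha-\varepsilon k_{31})+u_2(\alpha-k_{32})+u_3(\alpha k_{23}-1)$ is at most $-(\alpha-1)/2$ as soon as $u_1\le\nu_\delta\le(\alpha-1)/(2\alpha)$. This gives $u_3(t)\le c_0^{-\alpha}e^{-(\alpha-1)t/2}$ directly from time $0$. The paper then finishes with a crude Gronwall lower bound on $u_2$ at the time where $u_3\le\delta/4$, followed by a logistic comparison, which is essentially your Step~3.

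Your sequential scheme has three stages: a logistic comparison for $u_2$ against the worst case $u_3\equiv1$, then decay of $u_3$ at the fixed rate $r$, then a second logistic comparison. It is more elementary, but Step~1 only closes because $k_{32}(1-k_{23})>1$, which is what makes $\nu_4>0$ and $\nu_3>1/k_{32}$. The ratio argument needs only the planar exclusion inequalities $k_{23}<1<k_{32}$. It therefore does not depend on that structural condition, and its threshold $\nu_\delta$ does not involve $\nu_4$. Since \eqref{eq:cond-sys-2} is a standing assumption, your proof is fully valid for the lemma as stated and used. The ratio computation is simply the tool the paper reuses in Remark~\ref{rem:symmetric} and in Lemma~\ref{lem:close-gamma32}.

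A few small clean-ups:
\begin{itemize}
\item Drop the vacuous ``$\min\{c_0,\nu_3\}e^{-\ldots}$-type control'' sentence.
\item Take $t_1=0$ when $c_0\ge\nu_3-\theta$.
\item Justify $u_3<1$ on $[0,T_\delta]$ from $u_3'/u_3\le 1-u_3$ and $u_3(0)<1$, rather than assuming it.
\end{itemize}
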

\begin{proof}
Choose $\alpha\in\left(1,\min\left\{k_{32},\frac1{k_{23}}\right\}\right)$, $c\coloneqq\frac{\alpha-1}{2}$ and set $\nu_\delta:=
\min\left\{\frac{\delta}{4},\frac{\alpha-1}{2\alpha}\right\}$. Define
\begin{align*}
\lambda(t):=\frac{u_3(t)}{u_2(t)^\alpha}.
\end{align*}
A direct computation gives
\begin{align*}
\frac{\lambda'(t)}{\lambda(t)}
=
(1-\alpha)
+u_1(t)(\alpha-\varepsilon k_{31})
+u_2(t)(\alpha-k_{32})
+u_3(t)(\alpha k_{23}-1).
\end{align*}
Since $\alpha<k_{32}$ and $\alpha k_{23}<1$, the last two terms are
nonpositive. If $u_1(t)\le \nu_\delta$, then
\begin{align*}
\frac{\lambda'(t)}{\lambda(t)}
\le
1-\alpha+\alpha\nu_\delta
\le
-\frac{\alpha-1}{2}
=-c.
\end{align*}
Hence $\lambda(t)\le \lambda(0)e^{-ct}$. Since $u_3(0)<1$ and $u_2(0)\ge c_0$, we have $\lambda(0)\le c_0^{-\alpha}$. Moreover, since $u_2(t)<1$, 
\begin{align*}
u_3(t)=\lambda(t)u_2(t)^\alpha
\le c_0^{-\alpha}e^{-ct}.
\end{align*}
Choose \(T_\delta^1>0\) such that $c_0^{-\alpha}e^{-cT_\delta^1}\le \frac{\delta}{4}$, then $u_3(t)\le \frac{\delta}{4}$ for all $t\ge T_\delta^1$.

Next, on \([0,T_\delta^1]\), we have
\begin{align*}
\frac{u_2'(t)}{u_2(t)}
=
1-u_1(t)-u_2(t)-k_{23}u_3(t)
\ge
-(\nu_\delta+k_{23}),
\end{align*}
because $u_2(t)<1$ and $u_3(t)<1$. Thus $u_2(T_\delta^1)
\ge c_0e^{-(\nu_\delta+k_{23})T_\delta^1}
=:a_\delta>0$.

For $t\ge T_\delta^1$, using $u_1(t)\le\nu_\delta\le\frac{\delta}{4}$ and $u_3(t)\le\frac{\delta}{4}$, {we get $\frac{u_2'(t)}{u_2(t)}\ge 1-\frac{\delta}{2}-u_2(t)\coloneqq r_{\delta}-u_{2}(t)$ with $r_{\delta}=1-\frac{\delta}{2}$}. Let \(y\) be the solution of
\begin{align*}
y^{\prime}=y(r_\delta-y),
\qquad
y(T_\delta^1)=a_\delta.
\end{align*}
By the comparison principle, $u_2(t)\ge y(t)$ for all $t\ge T_\delta^1$. The logistic equation can be solved explicitly
\[
y(t)
=
\frac{r_\delta}
{1+\left(\frac{r_\delta}{a_\delta}-1\right)
e^{-r_\delta(t-T_\delta^1)}}.
\]
We now choose
\begin{align*}
T_\delta^2
:=
2\max\left\{\ln\left[
\frac{2}{\delta}
\left(\frac{r_\delta}{a_\delta}-1\right)\right];0\right\},
\end{align*}
Then, for every $t\ge T_\delta^1+T_\delta^2$,
\begin{align*}
\left(\frac{r_\delta}{a_\delta}-1\right)
e^{-r_\delta(t-T_\delta^1)}
\le
\frac{\delta}{2}.
\end{align*}
Therefore, for all $t\ge T_{\delta}^{1}+T_{\delta}^{1}$,
\begin{align*}
u_{2}(t)\ge y(t)
\ge
\frac{r_\delta}
{1+\frac{\delta}{2}}
=
\frac{1-\frac{\delta}{2}}
{1+\frac{\delta}{2}}>1-\delta.
\end{align*}
Set $\hat T_\delta:=T_\delta^1+T_\delta^2$. Then, for every $T_\delta>\hat T_\delta$, if $0<u_1(t)\le\nu_\delta$ for all $t\in[0,T_\delta]$, we have, for all $t\in[\hat T_\delta,T_\delta]$, $u_2(t)\ge 1-\delta$ and $u_3(t)\le \frac{\delta}{4}<\delta$. Moreover \(u_2(t)<1\) and \(u_1(t)\le\nu_\delta<\delta\). Hence
\begin{align*}
\|u(t)-(0,1,0)\|_\infty\le\delta
\qquad\text{for all }t\in[\hat T_\delta,T_\delta].
\end{align*}
The proof is complete.
\end{proof}
\begin{remark}
Since $c_0e^{-(\nu_\delta+k_{23})T_\delta^{1}}<1-\frac{\delta}{2}$, the quantity $T_\delta^{2}$ is well-defined and positive.
\end{remark}

\begin{remark}\label{rem:symmetric}
The conclusion of Lemma~\ref{lem:small-u1} admits two analogous versions for the other corner states.

First, let us consider the corner state $(0,0,1)$. Choose $1<\alpha<\min\left\{k_{13},\frac{1}{k_{31}}\right\}$ and define $\lambda_3(t):=\frac{u_1(t)}{u_3(t)^\alpha}$, $\nu_\delta^{(3)}
:=
\min\left\{
\frac{\delta}{4k_{32}},
\frac{\alpha-1}{2\alpha k_{32}}
\right\}$.

If $u_2(t)\le \nu_\delta^{(3)}$ on a sufficiently long time interval, then
$\lambda_3$ decays exponentially, which forces $u_1$ to become small, while
$u_3$ satisfies a logistic lower bound and approaches $1$.

More precisely, assume that $0<c_0\le u_3(0)<1$ and $0<u_1(0)<1$. Define
\begin{align*}
T_{\delta,1}^{(3)}
:=
\frac{2}{\alpha-1}
\ln\left(\frac{4}{c_0^\alpha\delta}\right),\qquad T_{\delta,2}^{(3)}
:=
2\ln\left[
\frac{2}{\delta}
\left(
\frac{1}{c_0 e^{-(k_{31}+k_{32}\nu_\delta^{(3)})T_{\delta,1}^{(3)}}}-1
\right)
\right].
\end{align*}
Setting $\hat{T}_\delta^{(3)}:=T_{\delta,1}^{(3)}+T_{\delta,2}^{(3)}$, one obtains the following analogue of Lemma~\ref{lem:small-u1}: if
\begin{align*}
u_2(t)\le \nu_\delta^{(3)}
\qquad \text{for all}\ t\in[0,T],
\qquad
T\ge\hat{T}_\delta^{(3)},
\end{align*}
then the solution enters the $\delta$-neighborhood of $(0,0,1)$.

Next, let us consider the corner state $\left(\frac1\varepsilon,0,0\right)$. Choose $1<\beta<\frac{1}{k_{12}}$ and define $\lambda_1(t):=\frac{u_2(t)}{u_1(t)^\beta}$, $\nu_{\delta,\varepsilon}^{(1)}\coloneqq\min\left\{
\frac{\delta\varepsilon}{4},\frac{\beta-1}{2\beta k_{13}}\right\}$.

If $u_3(t)\le \nu_{\delta,\varepsilon}^{(1)}$ on a sufficiently long time interval, then $\lambda_1$ decays exponentially, so that $u_2$ becomes small, while $u_1$ satisfies a logistic lower bound and approaches $\frac1\varepsilon$.

More precisely, one may define
\begin{align*}
T_{\delta,\varepsilon,1}^{(1)}\coloneqq\frac{2}{\beta-1}
\ln\left(\frac{4}{c_0^\beta \delta \varepsilon}\right),\qquad T_{\delta,\varepsilon,2}^{(1)}\coloneqq
2\ln\left[
\frac{2}{\delta\varepsilon}\left(\frac{\tfrac1\varepsilon}{c_0 e^{-(k_{12}+k_{13}\nu_{\delta,\varepsilon}^{(1)})T_{\delta,\varepsilon,1}^{(1)}}}-1
\right)
\right].
\end{align*}
Setting $\hat{T}_{\delta,\varepsilon}^{(1)}\coloneqq T_{\delta,\varepsilon,1}^{(1)}+T_{\delta,\varepsilon,2}^{(1)}$, one obtains the following analogue of Lemma~\ref{lem:small-u1}: if
\begin{align*}
u_3(t)\le \nu_{\delta,\varepsilon}^{(1)}
\qquad \text{for all } t\in[0,T],
\qquad
T\ge \hat{T}_{\delta,\varepsilon}^{(1)},
\end{align*}
then the solution enters the $\delta$-neighborhood of
$\left(\frac1\varepsilon,0,0\right)$.
\end{remark}
\begin{proof}[Proof of Proposition~\ref{thm:corner-visits}]
Fix $\xi\in(0,1)$. Let $\delta_2^0>0$ and $0<\varepsilon_0\le 1$ be given by Theorem~\ref{thm:main}. Up to reducing $\varepsilon_0$ if necessary, we keep the same notation. Fix $0<\varepsilon\le\varepsilon_0$. We shall also reduce
$\delta_1^0(\varepsilon)$ below, if necessary, without changing the notation.

Let $u=(u_1,u_2,u_3)$ be a solution of \eqref{eq:system} satisfying
\begin{align*}
u_1(0)=1,\qquad 0<u_2(0)\le \delta_1^0(\varepsilon),
\qquad 1-\delta_2\le u_3(0)<1,
\end{align*}
with $0<\delta_2\le\delta_2^0$. We first prove that the return construction can be iterated and that the value of $u_2$ at the beginning of each pattern tends to $0$. Set $t_0:=0$ and $\eta_n:=u_2(t_n)$ whenever $t_n$ is defined. Choose
$\delta_{1,0}^+$ such that $0<u_2(0)<\delta_{1,0}^+\le \delta_1^0(\varepsilon)$. Assume that $t_n$ and $\delta_{1,n}^+$ have been constructed and satisfy
\begin{align*}
u_1(t_n)=1,\qquad
0<\eta_n:=u_2(t_n)<\delta_{1,n}^+\le \delta_1^0(\varepsilon),
\qquad
u_3(t_n)\ge 1-\delta_2^0.
\end{align*}
Then Theorem~\ref{thm:main}, applied to the shifted solution
$s\mapsto u(t_n+s)$ with $\delta_1^+$ replaced by $\delta_{1,n}^+$, gives a
finite return time
\begin{align*}
t_{n+1}
:=
\inf\{t>t_n:\ u_1(t)=1,\ u_1'(t)<0\}.
\end{align*}
Moreover, the upper estimate on $u_2(t_{n+1})$ in
Theorem~\ref{thm:main} gives constants $C_\varepsilon>0$ and
$p_\varepsilon>1$, depending on $\varepsilon$ and on the coefficients, such
that
\begin{align*}
u_2(t_{n+1})
\le C_\varepsilon(\delta_{1,n}^+)^{p_\varepsilon}\coloneqq \delta_{1,n+1}^{+}.
\end{align*}
Here $p_\varepsilon>1$ holds after reducing $\varepsilon_0$ if necessary. Since $p_\varepsilon>1$, we may reduce $\delta_1^0(\varepsilon)$ so that
\begin{align*}
C_\varepsilon \rho^{p_\varepsilon-1}\le \frac12
\qquad
\text{for all }0<\rho\le \delta_1^0(\varepsilon).
\end{align*}
Therefore, whenever $0<\delta_{1,n}^+\le\delta_1^0(\varepsilon)$, we have $\delta_{1,n+1}^+=\delta_{1,n}^+
\left(C_\varepsilon(\delta_{1,n}^+)^{p_\varepsilon-1}
\right)\le\frac12\,\delta_{1,n}^{+}$. In particular,
\begin{align*}
0<\delta_{1,n+1}^+\le \delta_{1,n}^+\le \delta_1^0(\varepsilon),
\end{align*}
and the induction can continue.

It remains to check that the condition on the third component is preserved. By the estimate on $u_3$ in Theorem~\ref{thm:main}, there exists $a>0$,
depending only on the coefficients, such that
\begin{align*}
1-u_3(t_{n+1})\le \varepsilon^a.
\end{align*}
After reducing $\varepsilon_0$ if necessary, we may assume that $\varepsilon^a\le \delta_2^0$ for every $0<\varepsilon\le\varepsilon_0$. Thus $u_3(t_{n+1})\ge 1-\delta_2^0$. This closes the induction. Hence all return times $t_n$ are well-defined. Moreover, $\delta_{1,n}^+\le 2^{-n}\delta_{1,0}^+$, and therefore $\delta_{1,n}^+\to0$.

For each $n$, denote by
\begin{align*}
T_{11,n}<T_{12,n}<T_{1,n}<T_{21,n}<T_{2,n}
<T_{31,n}<T_{32,n}<t_{n+1},
\end{align*}
the corresponding absolute transition times obtained by applying the lemmas in the proof of Theorem~\ref{thm:main} to the shifted solution
$s\mapsto u(t_n+s)$. We now prove that, for all sufficiently large $n$, the solution visits successively prescribed neighborhoods of $E_3$, $E_2$, and $E_1$ during the interval $(t_n,t_{n+1})$. The only fact from the induction used below is $\delta_{1,n}^+\to0$.

First, we prove the visit near $E_3=(0,0,1)$. Let
$\nu_\xi^{(3)}$ and $\widehat T_\xi^{(3)}$ be given by the first analogue in Remark~\ref{rem:symmetric}, with $\delta=\xi$. Replacing, if necessary, the threshold $\nu_2$ in Lemma~\ref{lem:main-12} by $\widetilde\nu_2:=\min\{\nu_2,\nu_\xi^{(3)}\}$, we obtain a time $\widetilde T_{12,n}\in[T_{11,n},T_{12,n}]$ such that
\begin{align*}
u_2(t)\le \nu_\xi^{(3)}
\qquad
\text{for all }t\in[T_{11,n},\widetilde T_{12,n}],
\end{align*}
while $u_3$ stays bounded below by a positive constant on this interval.
Moreover, the proof of Lemma~\ref{lem:main-12} gives a constant $C>0$,
independent of $n$, such that
\begin{align*}
\widetilde T_{12,n}-T_{11,n}
\ge
\ln\left(\frac{\widetilde\nu_2}{C\delta_{1,n}^+}\right).
\end{align*}
Since $\delta_{1,n}^+\to0$, it follows that $\widetilde T_{12,n}-T_{11,n}\to+\infty$. Thus, for all sufficiently large $n$, this length is at least
$\widehat T_\xi^{(3)}$. Applying the first analogue in
Remark~\ref{rem:symmetric}, we obtain a time
$\tau_{3,n}^\xi\in(T_{11,n},\widetilde T_{12,n})$ such that
\begin{align*}
\|u(\tau_{3,n}^\xi)-E_3\|_\infty\le \xi.
\end{align*}

Second, we prove the visit near $E_2=(0,1,0)$. Let $\nu_\xi$ and
$\widehat T_\xi$ be given by Lemma~\ref{lem:small-u1}, with $\delta=\xi$.
Replacing, if necessary, $\nu_4$ in Lemma~\ref{lem:main-21} by $\widetilde\nu_4:=\min\{\nu_4,\nu_\xi\}$, we obtain a time $\widetilde T_{21,n}\in[T_{1,n},T_{21,n}]$ such that
\begin{align*}
u_1(t)\le \nu_\xi
\qquad
\text{for all }t\in[T_{1,n},\widetilde T_{21,n}],
\end{align*}
while $u_2$ stays bounded below by a positive constant on this interval.
Furthermore, the proof of Lemma~\ref{lem:main-21} gives constants $C>0$ and $\theta>0$, independent of $n$, such that
\begin{align*}
\widetilde T_{21,n}-T_{1,n}
\ge
\ln\left(\dfrac{\widetilde\nu_4}{C\left(\delta_{1,n}^{+}\right)^{\theta}}\right).
\end{align*}
Since $\delta_{1,n}^+\to0$, we have $\widetilde T_{21,n}-T_{1,n}\to+\infty$. Therefore, for all sufficiently large $n$, this length is at least $\widehat T_\xi$. Applying Lemma~\ref{lem:small-u1}, we obtain a time
$\tau_{2,n}^\xi\in(T_{1,n},\widetilde T_{21,n})$ such that
\begin{align*}
\|u(\tau_{2,n}^\xi)-E_2\|_\infty\le \xi.
\end{align*}

Third, we prove the visit near $E_1=(1/\varepsilon,0,0)$. Let
$\nu_{\xi,\varepsilon}^{(1)}$ and $\widehat T_{\xi,\varepsilon}^{(1)}$ be
given by the second analogue in Remark~\ref{rem:symmetric}, with
$\delta=\xi$. Replacing, if necessary, $\nu_6$ in Lemma~\ref{lem:main-32} by $\widetilde\nu_6:=\min\{\nu_6,\nu_{\xi,\varepsilon}^{(1)}\}$, we obtain a time $\widetilde T_{32,n}\in[T_{31,n},T_{32,n}]$ such that
\begin{align*}
u_3(t)\le \nu_{\xi,\varepsilon}^{(1)}
\qquad
\text{for all }t\in[T_{31,n},\widetilde T_{32,n}],
\end{align*}
while $u_1$ stays bounded below by a positive multiple of $1/\varepsilon$ on
this interval. Moreover, the proof of Lemma~\ref{lem:main-32} gives constants $\alpha>0$ and $\beta>0$, independent of $n$, such that
\begin{align*}
\widetilde T_{32,n}-T_{31,n}
\ge\ln\left(\dfrac{\tilde\nu_{6}\,\varepsilon^{\alpha}}{\left(\delta_{1,n}^{+}\right)^{\beta}}\right).
\end{align*}
Since $\delta_{1,n}^+\to0$, we get $\widetilde T_{32,n}-T_{31,n}\to+\infty$. Thus, for all sufficiently large $n$, this length is at least $\widehat T_{\xi,\varepsilon}^{(1)}$. Applying the second analogue in Remark~\ref{rem:symmetric}, we obtain a time
$\tau_{1,n}^\xi\in(T_{31,n},\widetilde T_{32,n})$ such that
\begin{align*}
\|u(\tau_{1,n}^\xi)-E_1\|_\infty\le \xi.
\end{align*}

Finally, by construction,
\begin{align*}
T_{11,n}<\widetilde T_{12,n}\le T_{12,n}
<T_{1,n}<\widetilde T_{21,n}\le T_{21,n}<T_{2,n}<T_{31,n}
<\widetilde T_{32,n}\le T_{32,n}.
\end{align*}
Hence, for all sufficiently large $n$, $\tau_{3,n}^\xi<\tau_{2,n}^\xi<\tau_{1,n}^\xi$. Choosing $N_\xi$ large enough so that the three conclusions above hold for
every $n\ge N_\xi$, we obtain, for every $n\ge N_\xi$,
\begin{align*}
\left\|u(\tau_{3,n}^\xi)-E_{3}\right\|_\infty\le \xi,
\qquad
\left\|u(\tau_{2,n}^\xi)-E_{2}\right\|_\infty\le \xi,
\qquad
\left\|u(\tau_{1,n}^\xi)-E_{1}\right\|_\infty\le \xi.
\end{align*}
This proves Proposition~\ref{thm:corner-visits}.
\end{proof}

\subsection{Proof of Theorem \ref{thm:heteroclinic-connections}}
We begin by stating several lemmas that will be useful for the proof of Theorem \ref{thm:heteroclinic-connections}.

\begin{lemma}\label{lem:small-transverse-components}
For every \(\xi\in(0,1)\), there exist \(N_\xi\ge1\), constants
\(C>0\), \(\alpha>0\), and \(\gamma>0\), independent of \(n\), such that, for
every \(n\ge N_\xi\), the following estimates hold:
\begin{align*}
\sup_{t\in[\tau_{3,n}^{\xi},\,\tau_{2,n}^{\xi}]}u_1(t)
\le
\max\left\{\xi,\ C(\delta_{1,n}^{+})^\alpha\right\},
\end{align*}
\begin{align*}
\sup_{t\in[\tau_{2,n}^{\xi},\,\tau_{1,n}^{\xi}]}u_3(t)
\le
\max\left\{\xi,\ C\varepsilon^{-\gamma}(\delta_{1,n}^{+})^\alpha\right\},
\end{align*}
and
\begin{align*}
\sup_{t\in[\tau_{1,n}^{\xi},\,\tau_{3,n+1}^{\xi}]}u_2(t)
\le\max\left\{\xi,\ C\left(\frac{\delta_{1,n}^{+}}{\varepsilon}\right)^{\alpha(1+\frac1\varepsilon)}
\right\}.
\end{align*}
Consequently, there exist sequences $\xi_k\downarrow0$ and
$n_k\to+\infty$ such that
\begin{align*}
\sup_{t\in[\tau_{3,n_k}^{\xi_k},\,\tau_{2,n_k}^{\xi_k}]}u_1(t)\to0,
\end{align*}
\begin{align*}
\sup_{t\in[\tau_{2,n_k}^{\xi_k},\,\tau_{1,n_k}^{\xi_k}]}u_3(t)\to0,
\end{align*}
and
\begin{align*}
\sup_{t\in[\tau_{1,n_k}^{\xi_k},\,\tau_{3,n_k+1}^{\xi_k}]}u_2(t)\to0.
\end{align*}
\end{lemma}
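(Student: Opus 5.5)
The plan is to read each estimate off the phase decomposition of the $n$-th pattern. We use the ordering from the proof of Proposition~\ref{thm:corner-visits}:
\[
\tau_{3,n}^\xi\in(T_{11,n},\widetilde T_{12,n}),\qquad \tau_{2,n}^\xi\in(T_{1,n},\widetilde T_{21,n}),\qquad \tau_{1,n}^\xi\in(T_{31,n},\widetilde T_{32,n}).
\]
Each interval in the statement is then covered by a few consecutive phase subintervals. On each subintervall the relevant transverse component is either below a threshold that we are free to shrink to $\xi$, or it is controlled by the logarithmic estimates of Lemmas~\ref{lem:main-11}--\ref{prop:main-3}. Those estimates are applied to the shifted solution $s\mapsto u(t_n+s)$ with $\delta_1^\pm$ replaced by $\delta_{1,n}^\pm$.

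\emph{First estimate ($u_1$ on $[\tau_{3,n}^\xi,\tau_{2,n}^\xi]$).} On $[\tau_{3,n}^\xi,T_{12,n}]$, Lemma~\ref{lem:main-12} gives $u_1<\nu_1$. The sharper bound comes from integrating $u_1'/u_1\le-(k_{13}-1)/2$ from $T_{11,n}$, which yields $u_1(t)\le \nu_1 e^{-(k_{13}-1)(t-T_{11,n})/2}$. Since $\tau_{3,n}^\xi$ is produced by Remark~\ref{rem:symmetric}, at that time we already have $u_1(\tau_{3,n}^\xi)\le\xi$, and $u_1$ keeps decreasing on this phase, so $u_1\le\xi$ there. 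On $[T_{12,n},T_{1,n}]$ we use $u_1'/u_1<1$, $T_1-T_{12}\lesssim1$ and \eqref{eq:est-u1-T12}, which give $u_1\le C(\delta_{1,n}^+)^\alpha$. On $[T_{1,n},\widetilde T_{21,n}]$ we have $u_1\le\widetilde\nu_4\le\nu_\xi\le\xi$ by the choice of the threshold $\widetilde\nu_4$. Taking the maximum of the three bounds gives the first estimate.

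\emph{Second estimate ($u_3$ on $[\tau_{2,n}^\xi,\tau_{1,n}^\xi]$).} On $[\tau_{2,n}^\xi,T_{21,n}]$, $u_3$ is decreasing (proof of Lemma~\ref{lem:main-21}) and $u_3(\tau_{2,n}^\xi)\le\xi$ by Lemma~\ref{lem:small-u1}, so $u_3\le\xi$ there. On $[T_{21,n},T_{31,n}]$ we combine \eqref{eq:est-u3-T4} with $u_3'/u_3<1$ over a time of length $\lesssim 1-\ln\varepsilon$ (Lemmas~\ref{prop:main-2} and \ref{lem:main-31}). This gives $u_3\le C\varepsilon^{-\gamma}(\delta_{1,n}^+)^\alpha$, which is the origin of the factor $\varepsilon^{-\gamma}$. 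On $[T_{31,n},\tau_{1,n}^\xi]$ we have $u_3\le\widetilde\nu_6\le\nu^{(1)}_{\xi,\varepsilon}\le\xi$ by construction.

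\emph{Third estimate ($u_2$ on $[\tau_{1,n}^\xi,\tau_{3,n+1}^\xi]$).} On $[T_{31,n},t_{n+1}]$, $u_2$ is decreasing (Lemmas~\ref{lem:main-32}--\ref{prop:main-3}). So on $[\tau_{1,n}^\xi,t_{n+1}]$ it is at most $u_2(\tau_{1,n}^\xi)\le\xi$. On $[t_{n+1},\tau_{3,n+1}^\xi]\subset[t_{n+1},\widetilde T_{12,n+1}]$ we use $u_2'/u_2<1$ over the bounded time $T_{11}\lesssim1$, and then $u_2\le\widetilde\nu_2\le\nu^{(3)}_\xi\le\xi$ up to $\widetilde T_{12,n+1}$. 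Together with \eqref{eq:est-u2-T3} for $u_2(t_{n+1})$, this gives the bound $\max\{\xi,C(\delta_{1,n}^+/\varepsilon^{\lambda^+})^{\alpha(1+1/\varepsilon)}\}$. After renaming constants this is the stated form; the exact power of $\varepsilon$ inside is immaterial for fixed $\varepsilon$.

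\emph{Consequence.} For fixed $\varepsilon$, $\delta_{1,n}^+\le2^{-n}\delta_{1,0}^+\to0$. Take $\xi_k=1/k$ and choose $n_k\ge N_{\xi_k}$ increasing such that every second entry of the three maxima is at most $\xi_k$. Then all three suprema are at most $\xi_k\to0$.

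The main obstacle is monotonicity near the endpoints $\tau^\xi$. We must ensure the component already small at $\tau^\xi$ stays small up to the next phase boundary, rather than only being bounded by a fixed constant such as $\nu_1$. I would handle this through the monotonicity facts established in the lemmas ($u_1$, $u_3$, $u_2$ decreasing in the respective phases). Where such a fact is not directly stated, I would use the Lyapunov-type ratio functions $\lambda$ from Lemma~\ref{lem:small-u1} and Remark~\ref{rem:symmetric}: they decay exponentially while the threshold condition holds, which propagates the smallness forward in time.
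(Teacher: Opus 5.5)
Your proposal is correct and follows the paper's proof. You use the same covering of each interval by phase subintervals, and you control the transverse component there by the $\xi$-adapted thresholds $\widetilde\nu_2,\widetilde\nu_4,\widetilde\nu_6$, by monotonicity on $[T_{11,n},T_{12,n}]$, $[T_{1,n},T_{21,n}]$ and $[T_{31,n},t_{n+1}]$, and by $u_i'/u_i<1$ over phases of controlled length, before choosing $n_k$ diagonally. Starting the monotonicity argument directly at the times $\tau^\xi_{3,n},\tau^\xi_{2,n},\tau^\xi_{1,n}$ is a harmless and slightly cleaner variant of the paper's claim that the orbit stays in the $\xi$-neighborhood up to $\widetilde T_{12,n}$. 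Absorbing the exact power of $\varepsilon$ in the third bound into $C$ is legitimate because $\varepsilon$ is fixed.
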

\begin{proof}
We prove the first estimate; the other two follow from the same argument,
using the corresponding estimates from Lemmas~\ref{lem:main-21},
\ref{lem:main-31}, and \ref{lem:main-32}.

By the construction in the proof of Proposition~\ref{thm:corner-visits}, for all $n$ sufficiently large,
\begin{align*}
\tau_{3,n}^{\xi}<\widetilde T_{12,n}
\le T_{12,n}< T_{1,n}<\tau_{2,n}^{\xi}<\widetilde T_{21,n}.
\end{align*}
Hence
\begin{align*}
[\tau_{3,n}^{\xi},\tau_{2,n}^{\xi}]
\subset
[\tau_{3,n}^{\xi},\widetilde T_{12,n}]
\cup
[\widetilde T_{12,n},T_{12,n}]
\cup
[T_{12,n},T_{1,n}]
\cup
[T_{1,n},\tau_{2,n}^{\xi}].
\end{align*}
On the first interval, the orbit is inside the $\xi$-neighborhood of
$E_3=(0,0,1)$, and therefore $u_1(t)\le \xi$. On $[T_{11,n},T_{12,n}]$, Lemma~\ref{lem:main-12} gives $u_3(t)>\frac12\left(1+\frac1{k_{13}}\right)$, so
\begin{align*}
\frac{u_1'(t)}{u_1(t)}=1-\varepsilon u_1(t)-k_{12}u_2(t)-k_{13}u_3(t)
<-\frac{k_{13}-1}{2}<0.
\end{align*}
Thus $u_1$ is decreasing on this interval, and consequently
\begin{align*}
\sup_{t\in[\widetilde T_{12,n},T_{12,n}]}u_1(t)
\le u_1(\widetilde T_{12,n})\le \xi.
\end{align*}
Next, Lemma~\ref{lem:main-12} gives constants $\alpha>0$ and $C>0$ such
that
\begin{align*}
u_1(T_{12,n})\le C(\delta_{1,n}^+)^\alpha.
\end{align*}
Since $u_1'/u_1\le1$ and $T_{1,n}-T_{12,n}$ is uniformly bounded by
Proposition~\ref{prop:main-1}, we also have
\begin{align*}
\sup_{t\in[T_{12,n},T_{1,n}]}u_1(t)
\le
C^{\prime}(\delta_{1,n}^+)^\alpha.
\end{align*}
Finally, by the construction of $\widetilde T_{21,n}$, $u_1(t)\le \nu_\xi\le \frac{\xi}{4}$ for all $t\in[T_{1,n},\widetilde T_{21,n}]$.
Combining these bounds yields
\begin{align*}
\sup_{t\in[\tau_{3,n}^{\xi},\,\tau_{2,n}^{\xi}]}u_1(t)
\le
\max\left\{\xi,\ C(\delta_{1,n}^{+})^\alpha\right\}.
\end{align*}

The estimates for \(u_3\) on
\([\tau_{2,n}^{\xi},\tau_{1,n}^{\xi}]\) and for \(u_2\) on
\([\tau_{1,n}^{\xi},\tau_{3,n+1}^{\xi}]\) are obtained in the same way. One
uses respectively the decompositions
\[
[\tau_{2,n}^{\xi},\tau_{1,n}^{\xi}]
\subset
[\tau_{2,n}^{\xi},\widetilde T_{21,n}]
\cup
[\widetilde T_{21,n},T_{21,n}]
\cup
[T_{21,n},T_{2,n}]
\cup
[T_{2,n},T_{31,n}]
\cup
[T_{31,n},\tau_{1,n}^{\xi}],
\]
and
\[
[\tau_{1,n}^{\xi},\tau_{3,n+1}^{\xi}]
\subset
[\tau_{1,n}^{\xi},\widetilde T_{32,n}]
\cup
[\widetilde T_{32,n},T_{3,n}]
\cup
[T_{3,n},T_{11,n+1}]
\cup
[T_{11,n+1},\tau_{3,n+1}^{\xi}].
\]
On the parts close to the corner, the corresponding transverse component is
bounded by \(\xi\). On the monotonicity intervals, the same component is
decreasing. On the remaining intervals, the estimates of
Lemmas~\ref{lem:main-21}, \ref{lem:main-31}, \ref{lem:main-32}, together with
Gronwall's inequality and the uniform bounds on the relevant time lengths,
give
\[
\sup_{t\in[\tau_{2,n}^{\xi},\,\tau_{1,n}^{\xi}]}u_3(t)
\le
\max\left\{\xi,\ C\varepsilon^{-\gamma}(\delta_{1,n}^{+})^\alpha\right\},
\]
and
\[
\sup_{t\in[\tau_{1,n}^{\xi},\,\tau_{3,n+1}^{\xi}]}u_2(t)
\le
\max\left\{\xi,\ 
C\left(\frac{\delta_{1,n}^{+}}{\varepsilon}\right)^{\alpha(1+\frac1\varepsilon)}
\right\}.
\]

It remains to choose a subsequence. Let \(\xi_k\downarrow0\). Since
\(\delta_{1,n}^+\to0\), we may choose \(n_k\to+\infty\) such that all three
remainders in the above estimates converge to \(0\). The three convergence
statements follow immediately.
\end{proof}
\begin{proof}[Proof of Theorem~\ref{thm:heteroclinic-connections}]
We prove only the existence of the connection from
\(E_3=(0,0,1)\) to \(E_2=(0,1,0)\). The other two connections are obtained by
the same argument.

Let $\xi_k\downarrow0$. By Proposition~\ref{thm:corner-visits} and
Lemma~\ref{lem:small-transverse-components}, we may choose $n_k\to+\infty$ such that
\begin{align*}
\|u(\tau_{3,n_k}^{\xi_k})-E_3\|_\infty\le \xi_k,
\qquad
\|u(\tau_{2,n_k}^{\xi_k})-E_2\|_\infty\le \xi_k,
\end{align*}
and
\begin{align*}
\sup_{t\in[\tau_{3,n_k}^{\xi_k},\,\tau_{2,n_k}^{\xi_k}]}u_1(t)
\longrightarrow0.
\end{align*}
Let $T_{1,n_k}$ be the intermediate time given by
Proposition~\ref{prop:main-1}. There exists $\eta_1\in(0,1)$, independent of $k$, such that, for all $k$ large enough,
\begin{align*}
\eta_1\le u_2(T_{1,n_k})\le 1-\eta_1,
\qquad
\eta_1\le u_3(T_{1,n_k})\le 1-\eta_1.
\end{align*}

We first show that \(T_{1,n_k}\) lies far from both endpoints of the transition.
Since \(u\) is bounded, there exists \(M>0\) such that
\begin{align*}
\left|\frac{u_2'}{u_2}\right|+
\left|\frac{u_3'}{u_3}\right|
\le M
\qquad\text{on }\mathbb R_+.
\end{align*}
Using \(u_2(\tau_{3,n_k}^{\xi_k})\le\xi_k\) and
\(u_2(T_{1,n_k})\ge\eta_1\), we get
\begin{align*}
T_{1,n_k}-\tau_{3,n_k}^{\xi_k}
\ge
\frac1M\log\left(\frac{\eta_1}{\xi_k}\right)
\longrightarrow+\infty.
\end{align*}
Similarly, using $u_3(\tau_{2,n_k}^{\xi_k})\le\xi_k$ and $u_3(T_{1,n_k})\ge\eta_1$, we obtain
\begin{align*}
\tau_{2,n_k}^{\xi_k}-T_{1,n_k}
\ge
\frac1M\log\left(\frac{\eta_1}{\xi_k}\right)
\longrightarrow+\infty.
\end{align*}
Define the shifted solutions $v_k(t)\coloneqq u(t+T_{1,n_k})$. The previous
estimates imply that $v_k$ is defined on every compact interval of $\mathbb R$
for $k$ large enough. Since $u$ is bounded and the vector field in
\eqref{eq:system} is smooth, $(v_k)_k$ is uniformly bounded and equicontinuous
on compact intervals. By Arzelà--Ascoli and a diagonal extraction, there
exists a subsequence, still denoted by $v_k$, and a global solution
$v^\infty:\mathbb R\to\mathbb R_+^3$ of \eqref{eq:system} such that $v_k\to v^\infty$ in $C^1_{\mathrm{loc}}(\mathbb R)$.

We claim that the first component of $v^\infty$ vanishes, i.e. $v^\infty_1\equiv0$. Indeed, for every fixed $A>0$, the interval
$[T_{1,n_k}-A,T_{1,n_k}+A]$ is contained in
$[\tau_{3,n_k}^{\xi_k},\tau_{2,n_k}^{\xi_k}]$ for $k$ large enough. Hence
\begin{align*}
\sup_{|t|\le A}v_{k,1}(t)
\le
\sup_{s\in[\tau_{3,n_k}^{\xi_k},\,\tau_{2,n_k}^{\xi_k}]}u_1(s)
\longrightarrow0.
\end{align*}
Passing to the limit gives $v^\infty_1(t)=0$ on $[-A,A]$. Since $A$ is
arbitrary, $v^\infty_1\equiv0$ on $\mathbb R$. Thus $v^\infty$ is a global
solution of the reduced system on the invariant face $\{u_1=0\}$:
\begin{align*}
\begin{cases}
(v^\infty_2)^{\prime}=v^\infty_2(1-v^\infty_2-k_{23}v^\infty_3),\\
(v^\infty_3)^{\prime}=v^\infty_3(1-k_{32}v^\infty_2-v^\infty_3).
\end{cases}
\end{align*}
Moreover,
\begin{align*}
\eta_1\le v^\infty_2(0)\le1-\eta_1,
\qquad
\eta_1\le v^\infty_3(0)\le1-\eta_1,
\end{align*}
so $v^\infty(0)$ lies in the interior of the positive quadrant of the face
$\{u_1=0\}$.

For the reduced planar system, $0<k_{23}<1<k_{32}$. Hence there is no positive
equilibrium. By the Bendixson--Dulac criterion, there is no periodic orbit in
$(0,\infty)^2$. Therefore the alpha- and omega-limit sets of the interior orbit
$v^\infty$ lie on the boundary. Since $E_3=(0,0,1)$ is the boundary equilibrium
that repels into the interior and $E_2=(0,1,0)$ is the boundary equilibrium
that attracts the interior orbit on this face, we obtain
\begin{align*}
v^\infty(t)\to E_3
\qquad\text{as }t\to-\infty,
\qquad
v^\infty(t)\to E_2
\qquad\text{as }t\to+\infty.
\end{align*}
Thus $v^\infty$ is a heteroclinic connection from $E_3$ to $E_2$.

The two remaining connections are obtained by applying the same compactness argument to the transitions $E_2\to E_1$ and $E_1\to E_3$,
using the intermediate times $T_{2,n}$ and $T_{33,n}$, together with the smallness estimates in Lemma~\ref{lem:small-transverse-components}. This gives global solutions $v^{21}$ and $v^{13}$ satisfying
\begin{align*}
v^{21}_3\equiv0,\qquad v^{13}_2\equiv0,
\end{align*}
and
\begin{align*}
v^{21}(t)\to E_2\quad\text{as}\ t\to-\infty,
\qquad
v^{21}(t)\to E_1\quad\text{as}\ t\to+\infty,
\end{align*}
\begin{align*}
v^{13}(t)\to E_1\quad\text{as}\ t\to-\infty,
\qquad
v^{13}(t)\to E_3\quad\text{as}\ t\to+\infty.
\end{align*}
Therefore the system admits the heteroclinic cycle
\begin{align*}
E_3\longrightarrow E_2\longrightarrow E_1\longrightarrow E_3.
\end{align*}
\end{proof}
\subsection{Proof of Theorem \ref{thm:assymp-sol-hetero}}
\begin{proof}[Proof of Theorem \ref{thm:assymp-sol-hetero}]
We only prove \eqref{eq:geom-conv-32}; the other two convergences are obtained in the same way.

Assume by contradiction that \eqref{eq:geom-conv-32} is false. Then there exist $A>0$, $\eta>0$, a subsequence $(n_j)_j$, and points
$s_j\in[-A,A]$ such that $d\bigl(u(T_{1,n_j}+s_j),\Gamma_{32}\bigr)\ge \eta$ for all $j$. Define the shifted functions
\begin{align*}
w_j(s):=u(T_{1,n_j}+s).
\end{align*}
Since $u$ is bounded and the vector field in \eqref{eq:system} is smooth on
bounded sets, the sequence $(w_j)_j$ is uniformly bounded and equicontinuous
on every compact interval of $\mathbb R$. Hence, by the Arzelà--Ascoli theorem and a diagonal argument, up to extracting a subsequence, {$w_j\to w^\infty$ in $C^1_{\mathrm{loc}}(\mathbb R)$, where $w^\infty$ is a global solution of \eqref{eq:system}. We write
\begin{align*}
w^\infty(s)=\bigl(w^\infty_1(s),w^\infty_2(s),w^\infty_3(s)\bigr).
\end{align*}
We first show that $w^\infty_1\equiv0$}. Fix $B>0$ and let $\rho>0$. By
Lemma~\ref{lem:small-transverse-components}, for all $n$ large enough,
\begin{align*}
\sup_{t\in[\tau_{3,n}^{\rho},\tau_{2,n}^{\rho}]}u_1(t)
\le
\max\left\{\rho,C(\delta_{1,n}^+)^\alpha\right\}.
\end{align*}
Moreover, the time-separation estimates obtained in the proof of
Theorem~\ref{thm:heteroclinic-connections} give
\begin{align*}
T_{1,n}-\tau_{3,n}^{\rho}\to+\infty,
\qquad
\tau_{2,n}^{\rho}-T_{1,n}\to+\infty.
\end{align*}
Thus, for $j$ large enough, $[T_{1,n_j}-B,T_{1,n_j}+B]\subset
[\tau_{3,n_j}^{\rho},\tau_{2,n_j}^{\rho}]$. It follows that
\begin{align*}
\sup_{|s|\le B}w_{j,1}(s)
\le
\max\left\{\rho,C(\delta_{1,n_j}^+)^\alpha\right\}.
\end{align*}
Letting $j\to+\infty$ and then $\rho\to0$, we get $\sup_{|s|\le B}w_{j,1}(s)\to0$. Passing to the limit gives $w_1^{\infty}(s)=0$ for all $s\in[-B,B]$. Since $B>0$ is
arbitrary, $w_1^{\infty}\equiv0$ on $\mathbb R$.

Therefore $w^\infty$ is a solution of the reduced system on the invariant
face $\{u_1=0\}$:
\begin{align*}
\begin{cases}
(w^\infty_2)^{\prime}=w_2(1-w^\infty_2-k_{23}w^\infty_3),\\
(w^\infty_3)^{\prime}=w_3(1-k_{32}w^\infty_2-w^\infty_3).
\end{cases}
\end{align*}
Moreover, by the choice of the intermediate time $T_{1,n}$, there exists
$\eta_1\in(0,1)$ such that
\begin{align*}
\eta_1\le w^\infty_2(0)\le1-\eta_1,
\qquad
\eta_1\le w^\infty_3(0)\le1-\eta_1.
\end{align*}
Hence $w(0)$ lies in the interior of the positive quadrant.

Since $0<k_{23}<1<k_{32}$, the reduced planar system has no positive
equilibrium. By the Bendixson--Dulac criterion, it has no periodic orbit in
$(0,\infty)^2$. Therefore the complete interior orbit $w$ is the unique
geometric heteroclinic connection from $E_3$ to $E_2$ on this invariant face. Consequently, $w^\infty(\mathbb R)=\Gamma_{32}$.

Since $s_j\in[-A,A]$ and $w_j\to w^\infty$ uniformly on $[-A,A]$, we obtain
\begin{align*}
d\bigl(w_j(s_j),\Gamma_{32}\bigr)\le
\|w_j(s_j)-w^\infty(s_j)\|_\infty\le
\sup_{|s|\le A}\|w_j(s)-w^\infty(s)\|_\infty
\xrightarrow[j\to+\infty]{}0.
\end{align*}
This is a contradiction. Hence \eqref{eq:geom-conv-32}
holds.
\end{proof}
\subsection{Proof of Theorem \ref{thm:omega-limit-cycle}}
\begin{lemma}\label{lem:close-gamma32}
Set $\Gamma_{32}^{\star}:=\Gamma_{32}\cup\{E_3,E_2\}$. Then, for every $\xi>0$, there exist $\eta>0$ and $N\ge1$ such that, for
every $n\ge N$,
\begin{align*}
d(u(t),\Gamma_{32}^{\star})\le \xi,
\qquad
\forall t\in[\tau_{3,n}^{\eta},\tau_{2,n}^{\eta}].
\end{align*}
In particular, since $\Gamma_{32}^{\star}\subset\Gamma$, one also has
\begin{align*}
d(u(t),\Gamma)\le \xi,
\qquad
\forall t\in[\tau_{3,n}^{\eta},\tau_{2,n}^{\eta}].
\end{align*}
\end{lemma}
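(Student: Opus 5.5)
The plan is to argue by contradiction and reuse the compactness machinery from the proofs of Theorem~\ref{thm:heteroclinic-connections} and Theorem~\ref{thm:assymp-sol-hetero}. The one new ingredient is that we control the whole transition window $[\tau_{3,n}^{\eta},\tau_{2,n}^{\eta}]$, not just a bounded window around $T_{1,n}$. We handle the two ends of the window by the local structure of the equilibria, and the middle by shadowing.

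\textbf{Step 1: the face $\{u_1=0\}$.}
Fix $\xi>0$ and work in the invariant face $\{u_1=0\}$ with the planar system for $(u_2,u_3)$. There $E_3$ is a hyperbolic saddle, whose unstable direction in the face is $\Gamma_{32}$. The point $E_2$ is a hyperbolic sink in the face, since its eigenvalues are $-1$ and $1-k_{32}<0$. Two consequences follow.

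\begin{itemize}
\item By continuous dependence of the full three-dimensional flow on initial data, and the fact that $u_1$ only adds a perturbation of size $\sup u_1$ to the $(u_2,u_3)$ equations, a trajectory that starts within $\eta$ of $E_3$ with $u_1$ small stays within $\xi$ of $\Gamma_{32}^\star$ up to the time it leaves a fixed neighbourhood of $E_3$.
\item Near $E_2$, a trajectory entering a small ball $B(E_2,r)$ with $u_1$ uniformly small remains in $B(E_2,\xi)$ as long as $u_1$ stays small. This is the attracting property of the sink, made robust to small $u_1$.
\end{itemize}

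\textbf{Step 2: contradiction and the middle regime.}
Suppose the lemma fails. Then there are $\eta_j\to0$, $n_j\to\infty$ and $s_j\in[\tau_{3,n_j}^{\eta_j},\tau_{2,n_j}^{\eta_j}]$ with $d(u(s_j),\Gamma_{32}^\star)>\xi$. Lemma~\ref{lem:small-transverse-components} gives $\sup u_1\to0$ on these windows.

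Shift time and set $w_j(s):=u(s_j+s)$. By Arzel\`a--Ascoli, a subsequence converges in $C^1_{\mathrm{loc}}$ to a global solution $w^\infty$ in the face $\{u_1=0\}$ with $d(w^\infty(0),\Gamma_{32}^\star)\ge\xi$. The argument requires that the window extends to $\pm\infty$ after shifting, i.e. that $s_j-\tau_{3,n_j}^{\eta_j}\to\infty$ and $\tau_{2,n_j}^{\eta_j}-s_j\to\infty$. If one of these gaps stays bounded along a subsequence, we fall into Step 3 instead.

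Assume both gaps tend to infinity. Then $w^\infty$ is a global bounded orbit in the closed quadrant. Its backward limit lies in the $\alpha$-limit of orbits of the planar system. Since every orbit of the full sequence starts within $\eta_j\to0$ of $E_3$ and the $u_2$-component is small at the start, $w^\infty$ must be contained in the closure of the unstable set of $E_3$ in the face. That closure is exactly $\Gamma_{32}^\star$: the invariant axes $u_2=0$ and $u_3=0$ reach only $E_3$, $0$, and $E_2$, and $0$ is excluded because $u_2+u_3$ stays bounded below. This contradicts $d(w^\infty(0),\Gamma_{32}^\star)\ge\xi$.

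To make the claim that $w^\infty$ lies in the closure of the unstable set of $E_3$ rigorous, I would use a uniform dwell estimate. Take a small ball $U$ around $E_3$. Points of $u$ in the window that lie outside $B(E_3,\xi)$ and near the stable axis $\{u_2=0\}$ of $E_3$ can only have arrived there by leaving $U$ along the unstable direction. This follows from the Hartman--Grobman linearization at $E_3$, which is uniform for small $u_1$ perturbations. Following the $u_3$-axis in backward time would have required $u_3>1$ or $u_3$ away from $1$, which Lemma~\ref{lem:main-12} excludes on $[T_{11,n},T_{12,n}]$.

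\textbf{Step 3: bounded gaps.}
If one gap stays bounded, the limit point $w^\infty(0)$ is the flow image, within a bounded time, of a limit point within distance $0$ of $E_3$ (respectively $E_2$). An equilibrium is invariant, so $w^\infty(0)\in\{E_3,E_2\}\subset\Gamma_{32}^\star$, again a contradiction.

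\textbf{Main obstacle.}
The hardest step is the middle-regime identification, i.e. excluding limit orbits that drift along the $u_3$-axis or the $u_2$-axis instead of along $\Gamma_{32}$. I would settle this with the monotonicity facts already proved: $u_3>\tfrac12(1+1/k_{13})$ on $[T_{11,n},T_{12,n}]$ and $u_2$ increasing on $[T_{12,n},T_{1,n}]$. These pin the exit from $U$ to the unstable branch.
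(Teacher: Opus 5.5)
\textbf{Comparison with the paper.} Your contradiction--compactness scheme is a genuinely different route. The paper argues directly. It fixes $T$ with $\gamma^{32}(\pm T)$ within $\eta/2$ of $E_3$ and $E_2$, and uses the shadowing of Theorem~\ref{thm:assymp-sol-hetero} on $[T_{1,n}-T,T_{1,n}+T]$. It then carries corner-closeness to the two ends of the window by trapping:
\begin{itemize}
\item On $[\tau_{3,n}^{\eta},T_{1,n}-T]$, the level $u_2=\eta$ can only be crossed upward, since there $u_2'/u_2\ge 1-2\eta-k_{23}>0$. Hence $u_2\le\eta$, and a logistic comparison keeps $u_3\ge 1-\xi$.
\item On $[T_{1,n}+T,\tau_{2,n}^{\eta}]$, the ratio $u_3/u_2^{\alpha}$ decreases. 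Hence $u_3\le\xi$, and a logistic comparison keeps $u_2\ge1-\xi$.
\end{itemize}
This yields an explicit $\eta(\xi)$. Your approach avoids the shadowing theorem and any matching of a parametrization of $\gamma^{32}$ to the shifts $T_{1,n}$. The price is that $\eta$ is non-explicit and you must rely on the planar phase portrait. Your bounded-gap case (Step~3) is correct: continuity of the flow on compact time intervals forces $u(s_j)\to E_3$ or $E_2$.

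\textbf{The gap.} The case of two unbounded gaps carries the whole argument, and it is not proved as written. You infer $w^\infty\subset\Gamma_{32}^\star$ from the fact that the orbits start within $\eta_j$ of $E_3$. That information sits at relative times $\tau_{3,n_j}^{\eta_j}-s_j\to-\infty$ and is lost under $C^1_{\mathrm{loc}}$ convergence, so it does not give $w^\infty(t)\to E_3$ as $t\to-\infty$. The Hartman--Grobman dwell estimate is never made precise. Also, Lemma~\ref{lem:main-12} only gives $u_3>\frac12(1+\frac1{k_{13}})$, not $u_3$ near $1$. Finally, the real threat is not limit orbits along the axes. The open region of the face bounded by the two axes and $\Gamma_{32}$ is filled with bounded entire orbits running from the origin to $E_2$, and it is these that must be excluded.

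\textbf{The repair.} Use the lower bound you mention in passing, turned into an $\alpha$-limit argument.
\begin{enumerate}
\item Show $u_2+u_3\ge c>0$ on $[\tau_{3,n}^{\eta},\tau_{2,n}^{\eta}]$, uniformly in $n$ and $\eta$. Use \eqref{eq:est-u3-T12} on $[T_{11,n},T_{12,n}]$, the monotone growth of $u_2$ from $\nu_2$ on $[T_{12,n},T_{1,n}]$ (proof of Lemma~\ref{prop:main-1}), and \eqref{eq:app-u2-ge-nu3} on $[T_{1,n},T_{21,n}]$. Hence $\inf_{\mathbb R}(w^\infty_2+w^\infty_3)\ge c$.
\item Let $A$ be the $\alpha$-limit set of $w^\infty$. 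It is compact, invariant and avoids the origin.
\item $A$ cannot contain the hyperbolic sink $E_2$ unless $w^\infty\equiv E_2$.
\item $A$ contains no non-stationary axis orbit, since such orbits either come from the origin or are unbounded in backward time.
\item There are no interior cycles, by Dulac's criterion.
\item By Poincar\'e--Bendixson, and because the stable manifold of $E_3$ in the face is the $u_3$-axis, $A=\{E_3\}$.
\end{enumerate}
An interior $w^\infty$ therefore lies on the one-dimensional unstable manifold of $E_3$, i.e.\ on $\Gamma_{32}$. A boundary $w^\infty$ obeying the lower bound is $E_2$ or $E_3$. Either way this contradicts $d(w^\infty(0),\Gamma_{32}^\star)\ge\xi$ and closes your argument.
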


\begin{proof}

Fix $0<\xi<1$. Choose $1<\alpha<\min\left\{k_{32},\frac1{k_{23}}\right\}$. Then choose $\eta>0$ small enough so that
\begin{align*}
\eta<\xi,\qquad
\eta<\frac{1-k_{23}}{2},\qquad
\eta<(1-k_{23})\xi,\qquad
(k_{31}+k_{32})\eta<\xi,
\end{align*}
and
\begin{align*}
\eta<\frac{\alpha-1}{2\alpha},
\qquad
\frac{\eta}{(1-\eta)^\alpha}<\xi.
\end{align*}
Let $\gamma^{32}$ be a parametrization of the heteroclinic orbit $\Gamma_{32}$ from
$E_3$ to $E_2$. Since
\begin{align*}
\gamma^{32}(s)\to E_3
\qquad\text{as }s\to-\infty,\qquad \gamma^{32}(s)\to E_2
\qquad\text{as }s\to+\infty,
\end{align*}
there exists $T>0$ such that
\begin{align*}
\|\gamma^{32}(-T)-E_3\|_\infty<\frac{\eta}{2},
\qquad
\|\gamma^{32}(T)-E_2\|_\infty<\frac{\eta}{2}.
\end{align*}
By the local convergence to the branch $\Gamma_{32}$, after choosing the
parametrization $\gamma^{32}$ consistently with the shifts $T_{1,n}$, there exists
$N\ge1$ such that, for every $n\ge N$,
\begin{align*}
\sup_{|s|\le T}
\|u(T_{1,n}+s)-\gamma^{32}(s)\|_\infty<\frac{\eta}{2}.
\end{align*}
Increasing $N$ if necessary, the time-separation estimates give
\begin{align*}
\tau_{3,n}^{\eta}<T_{1,n}-T<T_{1,n}+T<\tau_{2,n}^{\eta}
\end{align*}
for every $n\ge N$. Hence
\begin{align*}
[\tau_{3,n}^{\eta},\tau_{2,n}^{\eta}]
=
[\tau_{3,n}^{\eta},T_{1,n}-T]
\cup
[T_{1,n}-T,T_{1,n}+T]
\cup
[T_{1,n}+T,\tau_{2,n}^{\eta}].
\end{align*}
On the middle interval, if $t=T_{1,n}+s$, with $|s|\le T$, then
\begin{align*}
d(u(t),\Gamma_{32})
\le
\|u(T_{1,n}+s)-\gamma^{32}(s)\|_\infty
<\frac{\eta}{2}<\xi.
\end{align*}
Thus $d(u(t),\Gamma_{32}^{\star})\le\xi$ for all $t\in[T_{1,n}-T,T_{1,n}+T]$.

We next consider the left interval $[\tau_{3,n}^{\eta},T_{1,n}-T]$. By Lemma~\ref{lem:small-transverse-components}, after increasing $N$ if
necessary,
\begin{align*}
u_1(t)\le\eta
\qquad
\forall t\in[\tau_{3,n}^{\eta},\tau_{2,n}^{\eta}].
\end{align*}
We claim that $u_2(t)\le\eta$ on
$[\tau_{3,n}^{\eta},T_{1,n}-T]$. Indeed, $u_2(\tau_{3,n}^{\eta})\le\eta$ and $u_2(T_{1,n}-T)<\eta$, because both endpoints are in the $\eta$-neighborhood of $E_3$. If $u_2$ were
larger than $\eta$ somewhere in between, then at the first downward crossing
$t_1$ of the level $\eta$, one would have $u_2(t_1)=\eta$ and
$u_2'(t_1)\le0$. But, using $u_1(t_1)\le\eta$ and $u_3(t_1)\le1$,
\begin{align*}
\frac{u_2'(t_1)}{u_2(t_1)}
=
1-u_1(t_1)-u_2(t_1)-k_{23}u_3(t_1)
\ge
1-2\eta-k_{23}>0,
\end{align*}
which is a contradiction. Hence $u_2(t)\le\eta$ on this interval. Therefore, on the left interval,
\begin{align*}
\frac{u_3'(t)}{u_3(t)}
\ge
1-(k_{31}+k_{32})\eta-u_3(t)\ge
1-\xi-u_3(t).
\end{align*}
Since $u_3(\tau_{3,n}^{\eta})\ge1-\eta\ge1-\xi$, comparison with
$z'=z(1-\xi-z)$ gives $u_3(t)\ge1-\xi$ for all $t\in[\tau_{3,n}^{\eta},T_{1,n}-T]$. Together with $u_1(t)\le\eta<\xi$ and $u_2(t)\le\eta<\xi$, this yields
\begin{align*}
\|u(t)-E_3\|_\infty\le\xi
\qquad
\forall t\in[\tau_{3,n}^{\eta},T_{1,n}-T].
\end{align*}
Since $E_3\in\Gamma_{32}^{\star}$, we obtain $d(u(t),\Gamma_{32}^{\star})\le\xi$ on $[\tau_{3,n}^{\eta},T_{1,n}-T]$.

It remains to consider the right interval $[T_{1,n}+T,\tau_{2,n}^{\eta}]$. At its left endpoint, $u(T_{1,n}+T)$ belongs to the $\eta$-neighborhood of
$E_2$. Hence $u_2(T_{1,n}+T)\ge1-\eta$ and $u_3(T_{1,n}+T)\le\eta$.
Moreover $u_1(t)\le\eta$ on the whole interval. As in the proof of
Lemma~\ref{lem:small-u1}, define
\begin{align*}
\lambda(t):=\frac{u_3(t)}{u_2(t)^\alpha}.
\end{align*}
Since $1<\alpha<\min\{k_{32},1/k_{23}\}$ and
$u_1(t)\le\eta<(\alpha-1)/(2\alpha)$, the same computation gives
\begin{align*}
\frac{\lambda'(t)}{\lambda(t)}
\le
-\frac{\alpha-1}{2}.
\end{align*}
Consequently, $\lambda(t)\le \lambda(T_{1,n}+T)
\le
\frac{\eta}{(1-\eta)^\alpha}
<\xi$. Since $u_2(t)\le1$, we get $u_3(t)\le \xi$ on $[T_{1,n}+T,\tau_{2,n}^{\eta}]$.

Now
\begin{align*}
\frac{u_2'(t)}{u_2(t)}\ge
1-\eta-k_{23}\xi-u_2(t)
>
1-\xi-u_2(t),
\end{align*}
by the choice of $\eta$. Since $u_2(T_{1,n}+T)\ge1-\eta>1-\xi$,
comparison with $y'=y(1-\xi-y)$ gives
\begin{align*}
u_2(t)\ge1-\xi
\qquad
\forall t\in[T_{1,n}+T,\tau_{2,n}^{\eta}].
\end{align*}
Combining $u_1(t)\le\eta<\xi$, $u_3(t)\le\xi$, and $u_2(t)\ge1-\xi$, we obtain
\begin{align*}
\|u(t)-E_2\|_\infty\le\xi
\qquad
\forall t\in[T_{1,n}+T,\tau_{2,n}^{\eta}].
\end{align*}
Since $E_2\in\Gamma_{32}^{\star}$, $d(u(t),\Gamma_{32}^{\star})\le\xi$ for all $t\in[T_{1,n}+T,\tau_{2,n}^{\eta}]$. Combining the estimates on the three subintervals gives
\begin{align*}
d(u(t),\Gamma_{32}^{\star})\le\xi,
\qquad
\forall t\in[\tau_{3,n}^{\eta},\tau_{2,n}^{\eta}].
\end{align*}
Since $\Gamma_{32}^{\star}\subset\Gamma$, the last assertion follows.
\end{proof}
\begin{proof}[Proof of Theorem~\ref{thm:omega-limit-cycle}]
We first prove that $d(u(t),\Gamma)\to0$. By
Lemma~\ref{lem:close-gamma32} and its two analogous versions for
$\Gamma_{21}$ and $\Gamma_{13}$, for every $\xi>0$, there exist $\eta>0$ and
$N\ge1$ such that, for every $n\ge N$,
\begin{align*}
d(u(t),\Gamma)\le\xi
\qquad
\forall t\in[\tau_{3,n}^{\eta},\tau_{3,n+1}^{\eta}].
\end{align*}
Since the intervals $[\tau_{3,n}^{\eta},\tau_{3,n+1}^{\eta}]$ cover all
sufficiently large times, we obtain $d(u(t),\Gamma)\le\xi$ for all sufficiently large $t$. As $\xi>0$ is arbitrary, this proves
\begin{align*}
d(u(t),\Gamma)\xrightarrow[t\to+\infty]{}0.
\end{align*}

We now identify the omega-limit set. Let $y\in\omega(u)$. Then there exists a
sequence $t_k\to+\infty$ such that $u(t_k)\to y$. Since
$d(u(t),\Gamma)\to0$, we get $d(y,\Gamma)=0$. The set $\Gamma$ is closed, and therefore $y\in\Gamma$. Hence $\omega(u)\subset\Gamma$.

Conversely, we prove that $\Gamma\subset\omega(u)$. The three equilibria
$E_1,E_2,E_3$ belong to $\omega(u)$ by Proposition~\ref{thm:corner-visits}.
Let $y\in\Gamma_{32}$. Then there exists $s_0\in\mathbb R$ such that $y=v^{32}(s_0)$. By the local convergence, $u(T_{1,n}+s_0)\to v^{32}(s_0)=y$. Since $T_{1,n}+s_0\to+\infty$, this implies $y\in\omega(u)$. Thus $\Gamma_{32}\subset\omega(u)$. The same argument, using the convergences around $T_{2,n}$ and $T_{33,n}$,
gives $\Gamma_{21}\subset\omega(u)$ and $\Gamma_{13}\subset\omega(u)$.

Therefore $\Gamma\subset\omega(u)$. Combining the two inclusions, we conclude that $\omega(u)=\Gamma$.
\end{proof}
\appendix
\section{Parameters of the numerical computations}
\label{sec:parameters}

The parameters used for the numerical computations in Figures \ref{fig:two-patterns}--\ref{Figure3} are as follows. We use 
$k_{12}=0.1$, $k_{13}=4$, $k_{23}=0.1$, $k_{31}=0.1$, $k_{32}=5$ in all the simulations. 
We list the remaining parameters for each figure below. 

\noindent   Figure \ref{fig:two-patterns} (solution of \eqref{eq:main-1}): $\varepsilon=10^{-2}$, $v_1(0)=\varepsilon$, $v_2(0)=10^{-6}$, $v_3(0)=0.98$. 

\noindent   Figure \ref{fig:phase-trajectory} (solution of \eqref{eq:main-1}): $\varepsilon=10^{-2}$, $v_1(0)=\varepsilon$, $v_2(0)=10^{-6}$, $u_3(0)=0.98$.

\noindent   Figure \ref{Figure3} (solution of \eqref{eq:system}): $\varepsilon=10^{-2}$, $u_1(0)=1$, $u_2(0)=10^{-3}$, $u_3(0)=0.98$.

\bibliographystyle{plain}
\bibliography{biblio.bib}
\end{document}